\theoremstyle{definition}
\newtheorem{definition}{Definition}[section]
\theoremstyle{definition}
\newtheorem{proposition}{Proposition}[section]
\theoremstyle{definition}
\newtheorem{theorem}{Theorem}[section]
\theoremstyle{definition}
\newtheorem{lemma}{Lemma}[section]
\theoremstyle{definition}
\newtheorem{corollary}{Corollary}[section]
\theoremstyle{definition}
\theoremstyle{definition}
\newtheorem{exercise}{Exercise}[section]
\theoremstyle{remark}
\newtheorem{observation}{Observation}[section]
\theoremstyle{definition}
\newtheorem{remark}{Remark}[section]
\newtheorem{example}{Example}[section]
\DeclareMathOperator{\Mat}{\operatorname{Mat}}
\DeclareMathOperator{\Hom}{\operatorname{Hom}}
\DeclareMathOperator{\End}{\operatorname{End}}
\DeclareMathOperator{\Aut}{\operatorname{Aut}}
\DeclareMathOperator{\Lie}{\operatorname{Lie}}
\DeclareMathOperator{\SO}{\operatorname{SO}}
\DeclareMathOperator{\Sl}{\operatorname{Sl}}
\DeclareMathOperator{\Ad}{\operatorname{Ad}}
\DeclareMathOperator{\ad}{\operatorname{ad}}
\DeclareMathOperator{\im}{\operatorname{im}}
\DeclareMathOperator{\g}{\mathfrak{g}}
\DeclareMathOperator{\so}{\mathfrak{so}}
\DeclareMathOperator{\pr}{\operatorname{pr}}
\DeclareMathOperator{\Nbb}{\mathbb{N}}
\DeclareMathOperator{\Rbb}{\mathbb{R}}
\DeclareMathOperator{\Cbb}{\mathbb{C}}
\DeclareMathOperator{\Kbb}{\mathbb{K}}
\DeclareMathOperator{\del}{\partial}
\DeclareMathOperator{\A}{\mathcal{A}}
\DeclareMathOperator{\B}{\mathcal{B}} 
\DeclareMathOperator{\J}{\mathcal{J}} 
\DeclareMathOperator{\I}{\mathcal{I}} 
\DeclareMathOperator{\U}{\mathcal{U}} 
\DeclareMathOperator{\D}{\mathcal{D}}
\DeclareMathOperator{\Pol}{\operatorname{Pol}}
\DeclareMathOperator{\PC}{\operatorname{PC}}
\DeclareMathOperator{\PSpec}{\operatorname{PSpec}}
\let\S\relax \DeclareMathOperator{\S}{\mathcal{S}}
\let\d\relax \DeclareMathOperator{\du}{\underline{\operatorname{d}}}
\let\sl\relax \DeclareMathOperator{\sl}{\mathfrak{sl}} 
\DeclareMathOperator{\Poiss}{\operatorname{Poiss}}
\DeclareMathOperator{\Cas}{\operatorname{Cas}}
\DeclareMathOperator{\Ham}{\operatorname{Ham}}
\DeclareMathOperator{\gr}{\operatorname{gr}}
\DeclareMathOperator{\tr}{\operatorname{tr}}
\DeclareMathOperator{\Diff}{\operatorname{Diff}}
\DeclareMathOperator{\Smbl}{\operatorname{Smbl}}
\DeclareMathOperator{\Alt}{\operatorname{Alt}}
\DeclareMathOperator{\Sym}{\operatorname{Sym}}
\DeclareMathOperator{\Jac}{\operatorname{Jac}}
\let\sgn\relax\DeclareMathOperator{\sgn}{\operatorname{sgn}}
\DeclareMathOperator{\s}{\operatorname{s}}
\newcommand{\db}[1]{\mathopen{\{\!\!\{}#1\mathopen{\}\!\!\}}}
\title{Lectures on Poisson algebras}
\author{V. Roubtsov$^*$, R. Suchánek$^{\bullet}$ \\$^*$ volodya@univ-angers.fr \ \ $^{\bullet}$ r.suchanek.r@gmail.com }
\date{\today}
\begin{document}

\maketitle

\tableofcontents

\newpage

\section{Introduction}

The notion of a Poisson algebra was probably introduced in the first time by A.M. Vinogradov and J. S. Krasil'shchik in 1975 under the name "canonical algebra" and 
by J. Braconnier in his short note  "Algèbres de Poisson" (Comptes rendus 
Ac.Sci) in 1977.

It was a natural "algebraic interpretation" of the notion of Poisson structure and Poisson brackets appeared in XIX century in the framework of Classical Mechanics.

Nowadays, Poisson algebras have proved to have a very rich mathematical structure
(see the beautiful short article of Y. Kosmann-Schwarzbach in "Encyclopedia of Mathematics" (Poisson algebra, Encyclopedia of Mathematics,  \url{http://encyclopediaofmath.org/index.php?title=Poisson_algebra&oldid=51639}).

There are many good books and lecture notes devoted to Poisson structures and 
much more less sources concerning the algebraic side of the story. We should mention the extensive book of C. Laurent-Gengoux, A. Pichereau and P. Vanhaecke "Poisson structures" which covers also many algebraic aspects and structures associated with the notion of a Poisson variety \cite{Poisson-Structures}. 

These notes are slightly extended reproduction of the mini-course of the same title (5 lectures) given by the first author during the virtual (online) Winter School and Workshop "Wisla 20-21" and handled and written by the second. 

The virtual character of lectures has imposed few constraints and has defined a specific choice of material and chosen subjects. The authors had to pass between
Scylla of unknown audience level and tantalizing Charybdis of their will to exit out of the standard set of content in numerous existed lecture notes about Poisson structures in algebra and in geometry.

This contradiction can probably explain some strange "jumps" of a difficulty level between various chapters of our notes. We hope to come back to these notes and to extend, to improve, or even, to write a new comprehensive book covering the subject. Right now we were on a serious time (and the Covid pandemic) limits pressure and it was also a reason of some non-homogeneous choice of the lecture notes matters and its details.

There are (almost) no "new results" in the lectures. The only exception is a~content of the last brief chapter where we provide our classification results (joint with A. Odesskii and V. Sokolov) on the "low rank" double quadratic Poisson brackets. We had kept this chapter almost in it's specific form of a~computer presentation.

The lectures are based on many various well- and less known sources, which we had tried to carefully quot in the main body of the text. But we need to specify most influential: the beautiful paper of A.M. Vinogradov and J.S. Krasil'shchik \cite{Vinogradov1975} "What is the hamiltonian formalism?", lectures of P. Cartier "Some fundamental techniques in the theory of integrable systems" \cite{Cartier}, a short book of  K. H. Bhaskara and K. Viswanath "Poisson algebras and Poisson manifolds" \cite{Bhaskara1988PoissonAA}.

\subsubsection{Acknowledgment} 

V. Rubtsov is thankful to the Institut des Hautes Études Scientifiques Université Paris-Saclay for hospitality during the last stages of this work. R. Suchánek is grateful to the University of Angers and to the Masaryk University for their hospitality. The work of R. Suchánek was financially supported under the project GAČR EXPRO GX19-28628X and by the Barrande Fellowship program organized by The French Institute in Prague (IFP) and the Czech Ministry of Education, Youth and Sports (MYES). Authors are also grateful to the organizers of the Winter School and Workshop Wisla 20-21 for a wonderful online event, organized during a difficult time of a covid pandemic.

\newpage
\section{Motivation}

 \subsection{Lagrangian and Hamiltonian mechanics}

A fundamental discovery of Lagrange: a \emph{Lagrangian} function $L \colon TM \to \Rbb$
    \begin{align*}
        L = T - V \ ,
    \end{align*}
describes the motion of a particle. The equation of motion is described by the Euler-Lagrange equation
    \begin{align*}
        \frac{d}{dt} (\frac{\del L}{\del \Dot{q^i}}) = \frac{\del L}{\del q^i} \ . 
    \end{align*}
Here $(\Bar{q}) = (q^1, \ldots, q^n)$ are coordinates on $M$, $(\Bar{q}, \Bar{\Dot{q}}) = (q^1, \ldots, q^n,\Dot{q}^1, \ldots , \Dot{q}^n )$ are the induced coordinates on the tangent bundle $TM$. The above Euler-Lagrange equation follows from the variational principle 
    \begin{align*}
        \delta \int L(\Bar{q}, \Bar{\Dot{q}}) dt = 0 \ , 
    \end{align*}
where $\delta$ (in this context) refers to the variational derivative. Using the Legendre transformation, one can pass from the tangent bundle to the cotangent bundle $T^*M$ with the coordinates $(\Bar{q}, \Bar{p})$, and reformulate the Euler-Lagrange equations equivalently by the Hamiltons equations
    \begin{align}\label{Hamilton equations}
          \Dot{\Bar{q}} = \frac{\del H}{\del \Bar{p}},  \ \ \ 
          \Dot{\Bar{p}} = - \frac{\del H}{\del \Bar{q}} 
    \end{align}
The link between Lagrangian and Hamiltonian is given by 
    \begin{align*}
        H (\Bar{q},\Bar{p})= <\Bar{p}, \Bar{\Dot{q}}> - L (\Bar{q}, \Bar{\Dot{q}}) \ .
    \end{align*}


\subsection{Hamiltonian mechanics and Poisson brackets}

Instead of starting with the Lagrangian $L$ function, one can start with the Hamiltonian function $H \colon T^*M \to \Rbb$ and build the mechanics independently of the Lagrangian. We now go to the simpler setup of flat spaces, i.e. consider $M \cong \Rbb^N$. We will get back to the setup of smooth manifolds later on.     

Let $F,G$ be two differentiable functions on $\Rbb^{2n}$ with coordinates $(\Bar{q}, \Bar{p})$, where $\Bar{p} = (p_1, \dots, p_n), \Bar{q} = (q_1, \dots, q_n)$. Introduce a new functions as the result of the following skew-symmetric operation on $F$ and $G$. 
    \begin{align}
        \{ F,G\} : = \sum_{k = 1}^n (\frac{\del F}{\del q_k} \frac{\del G}{\del p_k} - \frac{\del G}{\del q_k} \frac{\del F}{\del p_k}) = \frac{\del F}{\del \Bar{q}} \frac{\del G}{\del \Bar{p}} - \frac{\del G}{\del \Bar{q}} \frac{\del F}{\del \Bar{p}} \in C^{\infty}(\Rbb^{2n}) \  .
    \end{align}
This operations allows to write the Hamilton's equations \eqref{Hamilton equations} associated with a~Hamiltonian function $H = H(\Bar{q},\Bar{p})$ in the following manner (putting $\Bar{q}$ and $\Bar{p}$ on an equal footing):
    \begin{align}\label{Hamilton system}
        \Dot{\Bar{q}} = \{ \Bar{q}, H \} \ , \ \ \
        \Dot{\Bar{p}} = \{ \Bar{p}, H \} \ . 
    \end{align}
Indeed we have 
    \begin{align*}
        \{ q_i , H \} = \sum_{k = 1}^n (\frac{\del q_i}{\del q_k} \frac{\del H}{\del p_k} - \frac{\del H}{\del q_k} \frac{\del q_i}{\del p_k} ) = \frac{\del H}{\del p_i} \ , \\
        \{ p_i , H \} = \sum_{k = 1}^n ( \frac{\del p_i}{\del q_k} \frac{\del H}{\del p_k} - \frac{\del H}{\del q_k} \frac{\del p_i}{\del p_k} ) = - \frac{\del H}{\del q_i} \ ,
    \end{align*}
since $\frac{\del q_i}{\del q_k} = \frac{\del p_i}{\del p_k} = \delta^i_k$ and $\frac{\del q_i}{\del p_k} = \frac{\del p_i}{\del q_k }= 0$.

Note that for each solution $\{ (q(t), p(t)\}$ of the Hamilton system \eqref{Hamilton system} and for any $F \in C^{\infty}(\Rbb^{2n})$
    \begin{align*}
        \frac{d}{d t} F (\Bar{q}(t), \Bar{p}(t)) & : = \sum_{k=1}^n (\frac{\del F}{ \del q_i} \Dot{q_i} + \frac{\del F}{ \del p_i} \Dot{p_i} ) \\
            & = \sum_{k=1}^n (\frac{\del F}{ \del q_i} \frac{\del H}{\del p_i} - \frac{\del F}{ \del p_i} \frac{\del H}{\del q_i} ) \\
            & = \{ F, H \} (\Bar{q}(t), \Bar{p}(t))  \ . 
    \end{align*}
In particular, for the Hamiltonian function $H$,
    \begin{align*}
        \frac{d}{d t} H (\Bar{q}(t), \Bar{p}(t)) = \{ H,H\}  (\Bar{q}(t), \Bar{p}(t)) = 0 \ , 
    \end{align*}
which is the \emph{conservation law} for $H$ (i.e. $ H (\Bar{q}(t), \Bar{p}(t))$ is constant along the trajectories). 
D. Poisson had observed that if $\{ F,H\}$ and $\{ G,H\}$ vanish, then $\{ \{F, G\}, H\}$ vanishes (Poisson bracket of two constants of motion is again a~constant of motion). This statement can be explained abstractly with the help of the \emph{Jacobi identity}
    \begin{align}\label{Jacobi identity}
        \{ \{F, G\}, H\} + \{\{G, H\}, F\} + \{ \{H, F\}, G\} = 0  \ ,
    \end{align}
since if $\{ F,H\} = \{ G,H\} = 0$, then the above equality gives $\{ \{F, G\}, H\} = 0$. Now let $\pi_{ij}$ be arbitrary smooth functions on $\Rbb^d$, $x_1, \dots, x_d$ cooridnates on $\Rbb^d$. Define 
    \begin{align}\label{to be a bracket}
        \{ F,G\} : =  \sum_{k=1}^d \pi_{ij} \frac{\del F}{\del x_i}\frac{\del G}{\del x_j}  \ .
    \end{align}
Then \eqref{to be a bracket} satisfies the Jacobi identity \eqref{Jacobi identity} if and only if
    \begin{align}
         \sum_{k=1}^d (\pi_{lk} \frac{\del \pi_{ij}}{\del x_l} + \pi_{li} \frac{\del \pi_{jk}}{\del x_l}  + \pi_{lj} \frac{\del \pi_{ki}}{\del x_l} ) = 0 \  . 
    \end{align}
Skew-symmetry and the Jacobi identity imply that the operation $\{ \cdot, \cdot \}$ defines a \emph{Lie algebra} structure on $C^{\infty}(\Rbb^d)$. Moreover this structure is compatible with the product operation $\cdot$ on $C^{\infty}(\Rbb^d), U \subset \Rbb^d$, meaning that it satisfies the \emph{Leibniz rule}
    \begin{align*}
        \{ FG, H\} = F\{ G, H\} + \{ F, H\}G  \ .
    \end{align*}

\section{Poisson algebras}    
There are two ways to think about a Poisson structure (given by a Poisson bracket) on a manifold $M$ (algebraic, smooth, analytic, etc.). 

\textbf{A geometric viewpoint}. To each function $H$ (smooth, holomorphic, etc.) on $M$, one can associate a vector field $X_H$, where $H$ is the Hamiltonian in the mechanical interpretation. The vector field is given by 
    \begin{align}
        X_H : = \{ - , H\}  \  .
    \end{align}

\textbf{An algebraic viewpoint}. Consider a vector space (typically infinite dimensional) $\A$, with two algebra structures: 
    \begin{enumerate}
        \item Structure of an associative and commutative algebra with a unit.
        \item A Lie algebra structure. 
    \end{enumerate}
\emph{The Poisson structure:} commutative structure + Lie algebra structure + compatibility conditions. 


Extracting the algebraic aspects of the above ideas, one is lead to the following definition.

\begin{definition}[Poisson algebra] Let $(\A, \cdot)$ be an associative, unital and commutative $\Kbb$-algebra ($\operatorname{char} \Kbb = 0$), with the unit denoted $1$. Let 
    \begin{align*}
        \{-,-\} \colon \A \times \A \to \A
    \end{align*}
be a Lie bracket defining a $\Kbb$-Lie algebra structure on $\A$. Then $\A$ is called \emph{a~Poisson algebra} if the operations $\cdot$ and $\{-,-\}$ are compatible in the following sense
    \begin{align}\label{Poisson alg - compatib cond of two structures}
        \{ a \cdot b, c\} = a \cdot \{ b, c\} + \{ a, c\} \cdot b \ .
    \end{align}
The Lie bracket is then called \emph{a Poisson bracket}.
\end{definition}

When the context is clear, we will usually omit the $\cdot$ symbol of the associative operation in $\A$.

\begin{remark}
Note that the Lie bracket $\{-,-\}$ in the above definition is not necessarily given by the commutator. In fact, if it is given by the commutator, i.e. $\{a,b\} = a \cdot b - b \cdot a$, then $\{a,b\} = 0$ for all $a,b \in \A$, since the $\cdot$ is assumed to be a commutative operation on $\A$. 
\end{remark}


\subsection{Subalgebras and ideals}

\begin{definition}[Subalgebras] A vector subspaces $\B \subset \A$ is \emph{a Poisson subalgebra} in a Poisson algebra $(\A, \{-,-\}, \cdot)$ if $\B \cdot \B subset \B$ and $\{ \B, \B \} \subset \B$.
\end{definition}

\begin{definition}[Ideals] A vector subspaces $\J \subset \A$ is \emph{a Poisson ideal} of a~Poisson algebra $(\A, \{-,-\}, \cdot)$ if $\J \cdot \A \subset \J$ and $\{ \J, \A \} \subset \J$.
\end{definition}

\begin{observation}
In the first definition, $\B$ is itself a Poisson algebra with operations given by restriction of operations in $\A$. The inclusion map $\iota \colon \B \subset \A$ is a Poisson algebra morphism. In the second definition, the quotient algebra $\A / \J$ inherits a Poisson bracket from $\A$ by the requirement that the canonical projection on the quotient $p \colon \A \to \A/\J$ is a~Poisson morphism. 
\end{observation}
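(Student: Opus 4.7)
The plan is to verify the two assertions by directly unwinding the axioms: the subalgebra case is essentially free once restriction is justified, and the quotient case reduces to checking well-definedness of the induced operations.

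For the subalgebra statement, I would first observe that the two hypotheses $\B\cdot\B\subset\B$ and $\{\B,\B\}\subset\B$ mean exactly that the associative product and the Poisson bracket restrict to well-defined binary operations on $\B$. The remaining algebra axioms---associativity, commutativity, skew-symmetry, the Jacobi identity, and the Leibniz compatibility \eqref{Poisson alg - compatib cond of two structures}---are universally quantified identities on $\A$, so they continue to hold a fortiori when all arguments happen to lie in $\B$. (One should either read the subalgebra definition in the non-unital sense or add the mild assumption $1\in\B$.) The inclusion $\iota\colon\B\hookrightarrow\A$ is then tautologically a Poisson algebra morphism, since the restricted operations agree with the ambient ones by construction.

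For the quotient, I would define the two operations on $\A/\J$ in the only way that forces the canonical projection $p$ to preserve both structures:
\[
 [a]\cdot[b]:=[a\cdot b],\qquad \{[a],[b]\}:=[\{a,b\}],
\]
and then verify that these are well-defined. The product descends because $\J$ is an associative ideal, so $\J\cdot\A\subset\J$. For the bracket, one needs $\{\J,\A\}\subset\J$ in the first slot (a hypothesis) and $\{\A,\J\}\subset\J$ in the second; the latter follows from the former using the skew-symmetry of $\{-,-\}$. Once well-definedness is in hand, every algebraic identity (associativity, commutativity, Jacobi, Leibniz) on $\A/\J$ is simply the image under $p$ of the corresponding identity on $\A$, since $p$ is surjective.

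The only genuine obstacle is this well-definedness check for the bracket on $\A/\J$, which is where both the ideal condition $\{\J,\A\}\subset\J$ and the skew-symmetry of $\{-,-\}$ are simultaneously used. After that verification, the assertion that $p$ is a Poisson morphism holds by the very construction of the quotient operations.
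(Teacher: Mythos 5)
Your proposal is correct and follows exactly the standard verification that the paper leaves implicit (the statement is given as an unproved observation): restriction of universally quantified identities for the subalgebra case, and well-definedness of the induced operations via the ideal conditions for the quotient case, with skew-symmetry handling the second slot of the bracket. Your caveat about the unit ($1\in\B$ not being forced by the stated definition of a Poisson subalgebra) is a fair and worthwhile observation given that the paper's Poisson algebras are assumed unital.
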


\begin{remark}
For a fixed field $\Kbb$ ($\operatorname{char}\Kbb = 0)$, $\Poiss_{\Kbb}$ denotes a category, whose objects are the Poisson $\Kbb$-algebras, and whose morphisms are $\Kbb$-morphisms of Poisson algebras. Group object in $\Poiss_{\Kbb}$ is the Poisson-Lie group $(G, \cdot, \{-,-\})$ s.t. the bracket on $G$ is \emph{multiplicative}, i.e. the multiplication $\cdot G \times G \to G$ is a~Poisson morphism, where $G \times G $ is considered with a Poisson bracket given by $\{ -,- \}_{G \times G}: = \{ -,- \}_G \oplus \{ -,- \}_G$.
\end{remark}

\begin{definition}[Prime]
A Poisson ideal $\mathcal{P} \subset \A$ is \emph{Poisson prime}, if for all Poisson ideals $\I, \J \subset \A$
    \begin{align*}
        \I \J \subset \mathcal{P} \Rightarrow \I \subset \mathcal{P} \text{ or } \J \subset \mathcal{P}
    \end{align*}
\end{definition}

\begin{remark}
If $\A$ is \emph{Noetherian}\footnote{Given an increasing sequence of ideals in $\A$, $\I_1 \subseteq \I_2 \subseteq \ldots $, there always exists $k \in \Nbb$ such that $\I_k = \I_{k+n}  $ for all $n \in \Nbb$.}, then the above definition is equivalent to $\mathcal{P}$ being both prime ideal and Poisson ideal.
\end{remark}

\begin{definition}[Spectrum]
The \emph{Poisson spectrum of} $\A$, denoted $\PSpec(\A)$, is the set of Poisson prime ideals of $\A$.  
\end{definition}

\begin{definition}[Maximal ideal]
A maximal ideal $\mathcal{M} \subset \A$ is a \emph{Poisson maximal ideal} if it is a Poisson ideal. 
\end{definition}

\begin{remark}
A Poisson maximal ideal is not the same thing as a maximal Poisson ideal.
\end{remark}

\begin{definition}[Core]
The \emph{Poisson core} of an ideal $\I \subset \A$, denoted $\PC(\I)$, is the largest Poisson ideal of $\A$ contained in $\I$. 
\end{definition}

\begin{remark}
If $\I$ is a prime ideal of $\A$, then $\PC(\I)$ is Poisson prime.
\end{remark}

\begin{definition}[Primitive ideal]
A Poisson ideal $\mathcal{P}$ of $\A$ is a \emph{Poisson primitive} if $\mathcal{P}$ is the Poisson core of some maximal ideal $\mathcal{M}$, i.e. $\PC( \mathcal{M} ) = \mathcal{P} $
\end{definition}

\begin{remark}
Each Poisson primitive ideal is Poisson prime. 
\end{remark}


\begin{definition}[Localization]
Let $R$ be a commutative ring and $S \subset R$ a multiplicative subset\footnote{$s_1,s_2 \in S \Rightarrow s_1 s_2 \in S$}. A \emph{localization of $R$ at $S$}, denoted $S^{-1}R$, is the commutative ring of equivalence classes in $R \times S$ defined by 
    \begin{align*}
        (\Tilde{r},\Tilde{s}) \in [r,s]  \iff \exists k \in S: (\Tilde{r}s - r\Tilde{s}) k = 0 \ .
    \end{align*}
One usually denotes the equivalence class $[r,s]$ by $\frac{r}{s}$ or by $rs^{-1}$ (this reflects the similarity with the construction of the field of fractions). The ring operations are given by 
    \begin{align*}
        \frac{\tilde{r}}{\tilde{s}} + \frac{r}{s} & : = \frac{\tilde{r}s + r\tilde{s}}{\tilde{s}s} \\
          \frac{\tilde{r}}{\tilde{s}} \cdot \frac{r}{s} & : = \frac{\tilde{r}r}{\tilde{s}s}
    \end{align*}
\end{definition}

\begin{exercise}
Show that the above construction indeed yields a ring structure on the set $S^{-1}R$. Describe the identity elements with respect to both ring operations. 
\end{exercise}


If $S \subset \A$ is a multiplicatively closed subset of a Poisson algebra $\A$, then the localization $S^{-1}\A$ is also a Poisson algebra with the brackets 
    \begin{align*}
        \{ as^{-1}, bt^{-1} \} : = (st\{a,b\} - sb\{a,t\} - at\{s,b\} + ab\{s,t\})(s^2t^2)^{-1},
    \end{align*}
for all $a,b \in \A$ and all $s,t, \in S$. For any Poisson ideal $\mathcal{P} \subset \A$, the localization $S^{-1} \mathcal{P} $ is a Poisson ideal in $S^{-1}\A$. For two multiplicative sets $S,T \subset \A$ such that $S \subset T$, there is a Poisson morphism $ \colon S^{-1}\A \to T^{-1}\A$.

\begin{exercise}
Let $s,t \in \A$ and consider the multiplicative subsets of $\A$
    \begin{align*}
        S: = \{ s^n| \ n \in \Nbb\} && T: = \{ t^n| \ n \in \Nbb\} \ ,
    \end{align*}
where $s,t \in \A$ are such that $t = s u$ for apropriate $u \in \A$. Show that the map $\varphi_{s,t} \colon  S^{-1}\A \to T^{-1}\A$ defined by  $\varphi_{s,t} (\frac{a}{s^m}) : = \frac{au^n}{t^n}$ is independent of $u$ and is a~Poisson morphism. 
\end{exercise}

\begin{example}
Any Poisson maximal ideal is maximal Poisson, but the converse is not true. A countre-example is the Weyl-Poisson algebra $\Cbb [x,y]$, with $\{ x,y\} = 1$. This is a simple Poisson algebra but not a simple associative algebra (consider the trivial ideal $\{0 \}$).
\end{example}

\subsection{Morphisms and derivations}

\begin{definition}[Morphisms and isomorphisms] Let $(\A_i, \{ -,-\}_i, \cdot_i)_{i = 1,2}$ be two Poisson algebras over $\Kbb$ and let $\varphi \colon \A_1 \to \A_2$ be a $\Kbb$-linear map satisfying for all $a,b \in \A_1$
    \begin{enumerate}
        \item $\varphi (a \cdot_1 b) = \varphi (a) \cdot_2 \varphi (b)$,
        \item $\varphi (\{a,b\}_1) = \{\varphi (a) , \varphi (b)\}_2$
    \end{enumerate}
Then $\varphi$ is called \emph{a morphism of Poisson algebras}. If $\varphi$ is a bijective morphism of Poisson algebras s.t. the inverse map $\varphi^{-1} \colon \A_2 \to \A_1$ is also a Poisson algebra morphism, then $\varphi$ is called \emph{an isomorphism of Poisson algebras}. 
\end{definition}


\begin{definition}[Derivation]
A $\Kbb$-linear map $\varphi \colon \A \to \A$ is called \emph{a $\Kbb$-derivation} on $\A$ if
    \begin{align}
        \varphi(a\cdot b) = a \varphi(b) + \varphi(a) \cdot b \ .
    \end{align}
The set of all derivations of $\A$ is denoted by $\mathcal{D}(\A)$, $\mathcal{D}^1(\A)$ or $\mathcal{D}(\A, \A)$. A bilinear map $\varphi \colon \A \times \A \to \A$ is called \emph{a bi-derivation} if 
    \begin{equation}
        \begin{gathered}
            \varphi (ab, c) = a \varphi (b,c) + b \varphi (a,c), \\
            \varphi (a, bc) = \varphi (a,b) c + b \varphi (a,c) 
        \end{gathered}
    \end{equation}    
\end{definition}

\begin{observation}
Directly from the definition, $\mathcal{D}(\A) \subset \End_{\Kbb} (\A) = \Hom_{\Kbb} (\A, \A)$. The $\Kbb$-linear map, given for every $b \in \A$, $a \mapsto \{ a,b\}$ is a~derivation on $\A$.
\end{observation}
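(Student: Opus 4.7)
The plan is to verify the two assertions directly from the relevant definitions; no genuine obstacle should arise, and the main ``step'' is identifying which axiom gives which property.

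For the inclusion $\mathcal{D}(\A)\subset \End_\Kbb(\A)=\Hom_\Kbb(\A,\A)$, I would simply unpack the definition: a $\Kbb$-derivation on $\A$ was introduced as a $\Kbb$-linear map $\varphi\colon \A\to\A$ satisfying the Leibniz rule. The domain and codomain agreeing with $\A$, plus $\Kbb$-linearity, is exactly the condition for membership in $\End_\Kbb(\A)$, and the identification $\End_\Kbb(\A)=\Hom_\Kbb(\A,\A)$ is standard notation. So this is a one-line remark.

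For the second claim, fix $b\in\A$ and define
\begin{align*}
  \varphi_b\colon \A\to\A, \quad \varphi_b(a):=\{a,b\}.
\end{align*}
I would check $\Kbb$-linearity of $\varphi_b$ from the bilinearity of the Poisson bracket (which is part of it being a $\Kbb$-Lie bracket): for $a_1,a_2\in\A$ and $\lambda\in\Kbb$,
\begin{align*}
  \varphi_b(\lambda a_1+a_2) = \{\lambda a_1+a_2,b\} = \lambda\{a_1,b\}+\{a_2,b\} = \lambda\varphi_b(a_1)+\varphi_b(a_2).
\end{align*}

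For the Leibniz rule, I would invoke the compatibility condition \eqref{Poisson alg - compatib cond of two structures}, namely $\{a\cdot c,b\}=a\cdot\{c,b\}+\{a,b\}\cdot c$ (applying it with the roles of the three slots labeled so that $b$ sits in the second argument of the bracket). This yields
\begin{align*}
  \varphi_b(a\cdot c) = \{a\cdot c,b\} = a\cdot\{c,b\}+\{a,b\}\cdot c = a\cdot\varphi_b(c)+\varphi_b(a)\cdot c,
\end{align*}
which is exactly the derivation property. The only micro-subtlety worth flagging is that \eqref{Poisson alg - compatib cond of two structures} is stated with the product in the first slot, so for a derivation in the first argument one applies the compatibility condition as written; had we instead wished to show $b\mapsto\{a,b\}$ is a derivation for fixed $a$, we would need to use the skew-symmetry of $\{-,-\}$ to swap arguments first. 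This is not really an obstacle, but it is the one point where one should be careful about notation.
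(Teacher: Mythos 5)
Your verification is correct and is exactly the direct-from-the-definitions argument the paper intends (the paper states this as an Observation without proof): $\Kbb$-linearity comes from bilinearity of the Lie bracket, and the Leibniz rule for $a\mapsto\{a,b\}$ is precisely the compatibility condition \eqref{Poisson alg - compatib cond of two structures} with $b$ in the second slot. Your remark about needing skew-symmetry to get a derivation in the \emph{other} argument is a sensible precaution, and is consistent with the paper's later convention $X_a=\{-,a\}$.
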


\begin{observation}
The biderivation associated with derivation on $\A$ given by a~Poisson bracket is always skew-symmetric.
\end{observation}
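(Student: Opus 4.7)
The statement asserts that the map $\varphi \colon \A \times \A \to \A$ defined by $\varphi(a,b) := \{a,b\}$, which the previous observation already tells us is a derivation in one slot, is in fact a biderivation, and moreover is skew-symmetric.

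The plan is to extract everything from two properties of the Poisson bracket: (i) the Leibniz compatibility $\{a \cdot b, c\} = a \cdot \{b,c\} + \{a,c\} \cdot b$ (derivation in the first argument), and (ii) the Lie-algebra axiom $\{a,b\} = -\{b,a\}$. The compatibility axiom directly gives the first of the two biderivation identities
\begin{align*}
    \varphi(ab,c) = \{ab,c\} = a\{b,c\} + \{a,c\}b = a\varphi(b,c) + b\varphi(a,c),
\end{align*}
using commutativity of $\cdot$ to rearrange $\{a,c\} \cdot b = b \cdot \varphi(a,c)$.

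For the second biderivation identity, I would apply skew-symmetry to reduce a Leibniz rule in the second slot to one in the first slot:
\begin{align*}
    \varphi(a,bc) = \{a, bc\} = -\{bc, a\} = -b\{c,a\} - \{b,a\}c = b\{a,c\} + \{a,b\}c = b\varphi(a,c) + \varphi(a,b)c.
\end{align*}
Thus $\varphi$ is a derivation in each argument separately, i.e.\ a biderivation in the sense of the definition above. Finally, skew-symmetry of $\varphi$ as a biderivation means $\varphi(a,b) = -\varphi(b,a)$ for all $a,b \in \A$, which is exactly the skew-symmetry axiom of the Lie bracket $\{-,-\}$, hence immediate.

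There is no real obstacle here: the whole content is that the Lie-bracket axioms (skew-symmetry) combined with the Poisson compatibility axiom (Leibniz in one slot) automatically promote $\{-,-\}$ from a ``one-sided'' derivation to a skew-symmetric biderivation. The only mild subtlety worth flagging is the use of commutativity of $\cdot$ when rewriting $\{a,c\} \cdot b$ as $b \cdot \varphi(a,c)$, which is needed to match the sign convention in the biderivation definition.
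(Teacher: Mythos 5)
Your proof is correct and is exactly the routine verification the paper has in mind (the paper states this as an Observation without proof): the compatibility axiom gives the Leibniz rule in the first slot, skew-symmetry of the Lie bracket transfers it to the second slot, and skew-symmetry of the resulting biderivation is just the Lie-bracket antisymmetry restated. Your remark about using commutativity of $\cdot$ to match the paper's sign/order convention in the biderivation definition is a fair point and correctly handled.
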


From now on, we will often abbreviate the notation and refer to a Poisson algebra $(\A, \{-,-\}, \cdot)$ simply by $\A$, if the context is clear. Also, all the vector spaces, algebras (Poisson, Lie), and the corresponding linear maps (morphisms, derivations), will be considered over a general field $\Kbb$, with $\operatorname{char} \Kbb = 0$, unless stated otherwise.

\section{Hamiltonian derivations and Casimirs}

\begin{definition}[Hamiltonian derivations and Casimirs]\label{Hamiltonian derivations and Casimirs}
Let $\A$ be a Poisson algebra and $a \in \A$. The derivation $X_a : = \{ - , a \} \in \mathcal{D}(\A)$ is called a Hamiltonian derivation with a Hamiltonian (or a Hamilton function) $a$ associated to $X_a$. We denote by
    \begin{align}\label{the set of Hamiltonian derivations}
        \Ham(\A) : = \{ X_a \ |\  a \in \A\} 
    \end{align}
the set of all Hamiltonian derivations. We have a linear map $\A \to \Ham(\A)$, $a \mapsto X_a$. Let $a \in \A$ be s.t. $X_a = 0$, then $a$ is called a \emph{Casimir element}. We denote by
    \begin{align}\label{Casimir elements}
        \Cas(\A ) : = \{ a \in \A| X_a = 0\}
    \end{align}
the set of all Casimir elements. 
\end{definition}

\begin{definition}\label{Poisson center}
The (Poisson) \emph{center of $\A$} is $\operatorname{Z}_{P}(\A) : = \{ a \in \A \ |\  \{a,b\} = 0 \text{ for all } b \in \A \}$. We have $ \Cas(\A) = \operatorname{Z}_{P}(\A)$. 
\end{definition}

\begin{definition}
\emph{Augmentation} of an (associative) $\Kbb$-algebra $\A$ is a $\Kbb$-algebra homomorpshism $\alpha \colon \A \to \Kbb$. The pair $(\A, \alpha)$ is called an \emph{augmented algebra}. The kernel $\ker \alpha$ is called the augmentation ideal of $\A$.
\end{definition}

\begin{lemma}
For arbitrary monic polynomial $p(x) \in \Cas[x]$ and arbitrary $b \in \A$
    \begin{align}\label{polynomial derivation-bracket formula}
        \{p(x), b \} = p^\prime(x) \{ x,b\} \ ,
    \end{align}
where $ p^\prime(x) = \frac{\del p (x)}{\del x}$.
\end{lemma}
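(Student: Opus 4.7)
The plan is to reduce the identity to its two essential ingredients: the Leibniz rule for the Poisson bracket, and the defining property of Casimirs. Since the compatibility condition \eqref{Poisson alg - compatib cond of two structures} says exactly that $\{-, b\} \colon \A \to \A$ is a derivation of the commutative product, and since $\{c, b\} = 0$ for every $c \in \Cas(\A)$ by Definition \ref{Hamiltonian derivations and Casimirs}, the whole computation reduces to evaluating $\{x^k, b\}$ for integers $k \geq 0$ and then extending by $\Cas$-linearity.

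First I would establish the monomial formula $\{x^k, b\} = k\, x^{k-1} \{x, b\}$ by induction on $k$. The case $k = 0$ is the observation that $\{1, b\} = 0$, which follows from the Leibniz rule applied to $1 = 1 \cdot 1$; the case $k = 1$ is trivial. For the inductive step, write $x^{k+1} = x \cdot x^k$ and apply the Leibniz rule:
\begin{align*}
\{x^{k+1}, b\} = x \{x^k, b\} + \{x, b\} x^k = x \cdot k\, x^{k-1} \{x, b\} + x^k \{x, b\} = (k+1)\, x^k \{x, b\},
\end{align*}
where commutativity of $\cdot$ is used to collect the factor $\{x, b\}$ on the right.

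With the monomial case in hand, write the monic polynomial as $p(x) = x^n + \sum_{k=0}^{n-1} c_k x^k$ with $c_k \in \Cas(\A)$. Applying the Leibniz rule to each term $c_k x^k$ gives
\begin{align*}
\{p(x), b\} = \{x^n, b\} + \sum_{k=0}^{n-1} \bigl( c_k \{x^k, b\} + \{c_k, b\}\, x^k \bigr).
\end{align*}
Each bracket $\{c_k, b\}$ vanishes because $c_k$ is a Casimir, and substituting the monomial formula produces
\begin{align*}
\{p(x), b\} = \Bigl( n\, x^{n-1} + \sum_{k=1}^{n-1} k\, c_k\, x^{k-1} \Bigr) \{x, b\} = p'(x)\, \{x, b\},
\end{align*}
as claimed.

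There is no genuine obstacle; the only mild point of care is invoking commutativity of $\cdot$ at the right moment so that the numerical coefficients and the Casimir coefficients can be gathered on the left of $\{x, b\}$. The monic hypothesis plays no substantive role beyond normalizing the leading coefficient, since $1 \in \Cas(\A)$ anyway and the argument works verbatim for any polynomial in $\Cas(\A)[x]$.
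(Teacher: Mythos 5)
Your proof is correct. It uses the same two ingredients as the paper — the Leibniz rule \eqref{Poisson alg - compatib cond of two structures} and the vanishing of brackets with Casimir coefficients — but organizes the induction differently. You first establish the monomial identity $\{x^k,b\} = k\,x^{k-1}\{x,b\}$ by induction on $k$ and then extend to arbitrary $p(x)\in\Cas(\A)[x]$ term by term, using $\{c_k,b\}=0$. The paper instead runs a single induction on $\deg p$ for monic $p$, splitting $p(x) = x\cdot x^{\tau-1} + c_{\tau-1}q(x)$ with $q$ monic of degree $\tau-1$ and applying the hypothesis to both pieces. Your route is cleaner on two counts: it makes the monic hypothesis visibly irrelevant (as you note, only $1\in\Cas(\A)$ is ever used), and it avoids the paper's decomposition $c_{\tau-1}x^{\tau-1}+\dots+c_0 = c_{\tau-1}q(x)$ with $q$ monic, which as written presupposes that $c_{\tau-1}$ is invertible (or nonzero in an integral domain) and so does not cover all monic polynomials; your term-by-term treatment sidesteps this entirely. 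The one step worth spelling out, which you correctly flag, is $\{1,b\}=0$, obtained from the Leibniz rule applied to $1=1\cdot 1$.
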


\begin{proof}
By induction. Suppose $\tau : = \deg p = 0$. Then $p(x) = c_0$, where $ c_0 \in \Cas(\A)$, and hence for all $b \in \A$
    \[ 
        \{ p(x), b \} = \{ c_0, b \} = 0 =  p^\prime(x) \{ x,b\}  \ . 
    \]
Let the induction hypothesis hold for monic polynomials of $\deg \leq \tau - 1$ and suppose $\tau > 0$. Then $p(x) = x^{\tau} + c_{\tau-1} x{\tau-1} + \ldots + c_1 x + c_0$, where all $c_i \in \Cas (\A)$. We can rewrite $p(x) = x^{\tau} + c_{\tau-1} q(x)$, where $q(x)$ is monic and $\deg q = \tau - 1$. Then
    \begin{align*}
        \{ p(x), b \} & = \{ x x^{\tau-1} , b\} + c_{\tau-1} \{ q(x) , b\} \\
            & =  x^{\tau-1} \{ x , b\} + x \{ x^{\tau-1} , b\} + c_{\tau-1} q^\prime(x)\{ x , b\} \\
            & = (x^{\tau-1} + x (\tau-1) x^{\tau-2} + c_{\tau-1} q^\prime(x) ) \{ x , b\} \\
            & = p^\prime(x) \{ x,b\}  \ . \qedhere
    \end{align*}
\end{proof}

\begin{proposition}
Let $\A$ be a Poisson algebra. 
    \begin{enumerate}
        \item $\Cas (\A)$ is a subalgebra of $(\A )$. Moreover, $\Kbb$ can be naturally identified with a subset of $\Cas (\A)$.
        \item If $\A$ is an integral domain (i.e. has no zero divisors), then $\Cas (\A)$ is integrally closed in $\A$.
        \item[3] Not every representation of $\A$ on $\End(\A)$ defines an $\A$-module structure on $\Ham(\A) \subset \End(\A)$.
        \item $\Ham(\A)$ is a $\Cas(\A)$-module.
        \item The map $\varphi \colon \A \to \mathcal{D}(\A)$, defined by $H \mapsto - X_H$ is a morphism of Lie algebras. 
        \item There is a short exact sequence of Lie algebras 
            \[ 0 \to \Cas (\A) \to \A \to \Ham (\A) \to 0  \ . \]
    \end{enumerate}
\end{proposition}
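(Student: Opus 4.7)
My plan is to treat the six items in order, leaning on the polynomial-bracket lemma for the only genuinely nonroutine part, namely (2). For (1), I would invoke the Leibniz rule to observe that if $a, b \in \Cas(\A)$ then $\{ab, c\} = a\{b, c\} + \{a, c\}b = 0$, so $\Cas(\A)$ is closed under the product (closure under addition and scalars being obvious). For $\Kbb \subset \Cas(\A)$, the computation $\{1, c\} = \{1 \cdot 1, c\} = 2\{1, c\}$ forces $\{1, c\} = 0$, hence $\Kbb \cdot 1 \subset \Cas(\A)$.

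Part (2) is where I expect the real obstacle. Given $b \in \A$ integral over $\Cas(\A)$, I would choose a monic $p \in \Cas(\A)[x]$ of minimal degree annihilating $b$ and apply the preceding lemma to obtain $0 = \{p(b), c\} = p'(b)\{b, c\}$ for every $c \in \A$. The delicate step is to show $p'(b) \neq 0$: since $\operatorname{char}\Kbb = 0$, the polynomial $\tfrac{1}{\deg p}\, p'$ is monic in $\Cas(\A)[x]$ of strictly smaller degree, so minimality of $p$ forbids $p'(b) = 0$. The integral domain hypothesis then allows cancellation to yield $\{b, c\} = 0$, i.e.\ $b \in \Cas(\A)$. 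Both hypotheses are essential at precisely this point, which is why I view it as the crux.

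For (3) and (4), I would use the Weyl-Poisson algebra $\Cbb[x, y]$ with $\{x, y\} = 1$: under the natural left action of $\A$ on $\End_{\Kbb}(\A)$, the operator $x \cdot X_y$ acts as $f \mapsto x\,\partial_x f$, and no $g$ can satisfy $\partial_y g = x$ and $\partial_x g = 0$ simultaneously, so $x \cdot X_y \notin \Ham(\A)$. Restricting the action to $\Cas(\A)$ does work because for $c \in \Cas(\A)$ the Leibniz rule collapses $\{f, cH\} = c\{f, H\} + H\{f, c\}$ to $c\{f, H\}$, giving $c \cdot X_H = X_{cH} \in \Ham(\A)$; the $\Cas(\A)$-module axioms then follow from (1).

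For (5), I would compute directly: using antisymmetry and the Jacobi identity, $[X_{H_1}, X_{H_2}](f) = \{\{f, H_2\}, H_1\} - \{\{f, H_1\}, H_2\} = -\{f, \{H_1, H_2\}\}$, so $[X_{H_1}, X_{H_2}] = -X_{\{H_1, H_2\}}$, and the minus sign in $\varphi(H) := -X_H$ is precisely what converts this anti-morphism into a morphism. Item (6) is then immediate: $\varphi$ is surjective by definition of $\Ham(\A)$, its kernel is $\Cas(\A)$ by definition of a Casimir, and $\Cas(\A)$ is an abelian Lie subalgebra of $\A$ by (1), so $0 \to \Cas(\A) \to \A \to \Ham(\A) \to 0$ is a short exact sequence of Lie algebras.
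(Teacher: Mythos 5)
Your proof is correct and follows essentially the same route as the paper: the same minimal-degree argument via the lemma $\{p(a),b\}=p'(a)\{a,b\}$ for integral closedness, the same Jacobi-identity computation $[X_{H_1},X_{H_2}]=-X_{\{H_1,H_2\}}$ for (5), and the same identification $\ker\varphi=\Cas(\A)$ for (6). The one place you improve on the paper is item (3): where the paper manipulates the identity $l_xX_a(b)=\{b,xa\}-\{b,x\}a$ and concludes with "which is obviously not always the case," you exhibit the concrete failure $x\cdot X_y=x\,\del_x\notin\Ham(\Cbb[x,y])$ in the Weyl--Poisson algebra, which makes the negative claim genuinely verified rather than asserted.
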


\begin{proof}
    \begin{itemize}
        \item[1.] That $\Cas(\A)$ is a subalgebra of $\A$ is obvious. Now we show that scalars correspond to Casimir elements. Notice that since $\A$ is an algebra with the unit, $1$, and because $\Kbb$ has an action on $\A$ (denoted also by juxtaposition), there is a natural injective morphism $\iota \colon \Kbb \to \A$, given by $k \mapsto k1$. This means that we can identify $\Kbb$ with $\iota (\Kbb) = \Kbb 1 \subset \A$. We will simply write $\iota(k) = k$.\footnote{In this notation, $k a$ can be interpreted equivalently as action of $\Kbb$ on $\A$, or as a multiplication in $\A$ after identifying $\Kbb$ with $\iota(\Kbb)$. Of course we have $k 1 = 1  k = k$.} Now consider arbitrary $a \in \A$ and $k \in \Kbb$. Then we have 
            \[ 
                \{ a, k1\} = \{a , k\} 1 + k\{ a, 1\} ,
            \]
        which implies that $k \{ a, 1\} = 0$ (otherwise $\{ a, k 1\} \neq \{a , k\}$) and hence $\{ a, 1\} = 0$.\footnote{since for $a \neq 0: k a = 0 \Rightarrow k = 0$} But if $\{ a, 1\} = 0$ then the $\Kbb$-bilinearity of $\{-,-\}$ implies $k \{ a, 1\} = \{ a, k\} = 0$. So arbitrary $k \in \Kbb \cdot 1$ satisfies $ \{ a, k\} = 0$ for all $a \in \A$, hence $\Kbb 1 \subset \Cas (\A)$.
        \item[2.] Suppose $a$ is an integral element over $\Cas(\A)$, i.e. $a \in \A$ and there is a~monic polynomial $p(x) \in \Cas(\A) [x]$ s.t. $p(a) = 0$. We need to show that $a \in \Cas(\A)$. Without loss of generality, suppose $p$ is the smallest degree polynomial s.t. $p(a) = 0$. If $\deg p = 1$, then $p(x) = x-a$. Hence $a \in \Kbb \subset \Cas(\A)$ by the previous statement. Suppose $\tau : = \deg p > 1$. Using \eqref{polynomial derivation-bracket formula}, for any $b \in \A$ we have
            \[
                0 = \{ p(a), b\} = p^\prime (a) \{a,b \}  \ .
            \]
        Now $p^\prime (a) \neq 0$, otherwise there exists $k \in \Kbb$ s.t. $\frac{p^\prime}{k}$ is a monic polynomial, which satisfies $\frac{p^\prime}{k}(a) = 0$ and $\deg (\frac{p^\prime}{k}) = \deg p^\prime = \tau - 1$. But this would be a contradiction with the assumption of $p$ being the smallest degree monic polynomial with the property $p(a) = 0$. Because $\A$ has no zero divisors, $\{a,b \} = 0$. As $b \in \A$ is arbitrary, this shows that $a \in \Cas(\A)$.
        \item[3] Consider the left action $l \colon \A \to \End (\End(\A)), x \mapsto l_x$, where $l_x(a) $. Suppose $l$ defines an $A$-module structure on $\Ham(\A)$. Then for any $x \in \A$ and arbitrary $X_a \in \Ham (\A)$, we have $l_x X_a \in  \Ham (\A)$. This means there is $d \in \A$ s.t. for every $b \in \A: l_x X_a (b) = X_d (b)$. But this is not true, in general, since
            \[
                l_x X_a (b) = x\{ b,a\} = \{ b,xa\} - \{b,x\}a  \ ,
            \]
        while 
            \[
                X_d (b) = \{ b, d\} \  .    
            \]
        In other words,
            \begin{align}\label{intermediate step 1}
                 (l_x X_a - X_d ) (b) = 0 \iff d =  xa \text{ and } \{b,x\}a  = 0 \ ,
            \end{align}
        for every $a,b,x \in \A$, which is obviously not always the case. Notice though, if we assume that $x \in \Cas (\A)$, then $ \{b,x\} = 0$ for every $b \in \A$ and thus \eqref{intermediate step 1} can always be satisfied. This shows that the action of $\A$ on $\End(\A)$, if restricted to $\Cas (\A)$, defines an action on $\Ham(\A)$, i.e. statement 4 follows.
        \item[4.] See the proof of statement $3$.
        \item[5.] By definition, $ \varphi(a) (c) = - X_a (c) = - \{ c, a\}$. We want to show that $\varphi(\{a ,b \}) = [\varphi(a), \varphi(b)]$, where \[ [\varphi(a), \varphi(b)] = [X_a, X_b] := X_a \circ X_b - X_b \circ X_a \ . \] From the Jacobi identity \eqref{Jacobi identity}, we have for arbitrary $a,b,c \in \A$ 
        \[ - \{ c , \{ a,b \} \} = \{ \{ c,b \}, a \} - \{  \{ c,a \}, b \} = X_a (X_b (c) ) - X_b( X_a(c) )  \ . \]
        Since $\varphi(\{a ,b \}) = - X_{\{ a,b\}} = - \{ c , \{ a,b \} \} $, this shows that $\varphi$ is a Lie algebra morphism.
        \item[6.] The first map is an inclusion, $\iota \colon \Cas (\A) \to \A $, which is obviously a Lie algebra morphism. The second map is $\varphi \colon \A \to \Ham (\A)$, and is given by $a \mapsto - X_a$. The previous statement shows that it is a Lie algebra morphism. Now consider $a \in \A$. Then $X_a = 0 $ iff $ \forall b \in \A: X_a (b) = \{ b, a\} = 0$. Thus $\ker \varphi = \im \iota $, meaning that the short sequence 
           \begin{equation*}
                \begin{tikzcd}
                0 \arrow{r}{0} & \Cas (\A) \arrow{r}{\iota} & \A \arrow{r}{\varphi} &  \Ham (\A) \arrow{r}{0} & 0
                \end{tikzcd}
            \end{equation*}  
        is exact. \qedhere
    \end{itemize}
\end{proof}

\begin{remark}
Corollary of the fifth statement of the above proposition is that $\Ham(\A) \subset \mathcal{D}(\A)$ is a Lie subalgebra. 
\end{remark}



\subsection{Exterior algebra of a commutative algebra} 

\emph{In the following constructions, $\A$ is always commutative, associative, unital $\Kbb$-algebra with the unit $1$.} 

\textbf{Module of Kahler differentials of $\A$}. Let $\Omega_{\A / \Kbb} (\A)$ be a $\Kbb$-vector space generated by elements of the form $b\du (a)$, for all $a,b \in \A$, satisfying relations
    \[ 
        \du(ab) = \du(a) b + a\du(b) \ . 
    \]
where $\du\colon \A \to \Omega_{\A / \Kbb} (\A)$ is an $\A$-linear map called the \emph{universal derivation}. More precisely, for any derivation $\del \in \mathcal{D}(\A)$, there is a unique $\A$-linear map $\tilde{\del} \colon \Omega_{\A / \Kbb} (\A) \to \A$, such that the following diagram commutes 
    \begin{equation}\label{universal derivative}
        \begin{tikzcd}
        A \arrow{r}{\du} \arrow[swap]{dr}{\del} & \Omega_{\A / \Kbb} (\A) \arrow[dashed]{d}{\exists! \tilde{\del}}  \\
        & \A
        \end{tikzcd}
    \end{equation}  
The $\A$-module structure on $\Omega_{\A / \Kbb} (\A)$ is given by multiplication from the left. Using the universal property of $(\Omega_{\A / \Kbb} (\A), \du)$, we have a canonical $\A$-module isomorphism 
    \begin{align}\label{duality between derivations and Kahler differentials}
        \mathcal{D}(\A) \cong \Hom_{\A} (\Omega_{\A / \Kbb} (\A), \A) \ . 
    \end{align}
This $\A$-isomorphism can be understood as duality between $\mathcal{D}(\A)$ and $\Omega_{\A / \Kbb} (\A)$. Moreover, it can be extended to polyderivations and exterior forms on the algebra $\A$. 


\textbf{Exterior algebra of $\A$}. Let $\Omega^\bullet (\A)$ be the (graded) exterior algebra of the vector space of Kahler differentials of $\A$
    \begin{align*}
        \Omega^\bullet (\A) : = \oplus_{k \geq 0} \Omega^k (\A) : = \oplus_{k \geq 0} \Lambda^k (\Omega_{\A/\Kbb} (\A) ) ,
    \end{align*}
together with the universal derivative $\du\colon \A \to \Omega^1 (\A)$, which extends to
    \begin{align*}
        \du\colon \Omega^k (\A) \to \Omega^{k+1}(\A)
    \end{align*}
for all $k$. The universal derivative satisfies all the properties of the exterior differential on forms. In particular, $\du$ is a boundary operator
    \begin{align*}
        \du^2  = 0 \ , 
    \end{align*}
and acts with respect to the wedge product as
\begin{align*}
    \du(\omega_1 \wedge \omega_2) = \du\omega_1 \wedge \omega_2 + (-1)^{k} \omega_1 \wedge \du \omega_2 \ ,
\end{align*}
where $\omega_1 \in \Omega^k (\A)$ and $\omega_2 \in \Omega^\bullet (\A)$ are arbitrary. The $\A$-duality \eqref{duality between derivations and Kahler differentials} between $\D(\A)$ and $\Omega_{\A /\Kbb}$ can be written as 
    \begin{align*}
        \mathcal{D}(\A) \times \Omega_{\A / \Kbb} (\A) \to \A \ ,\\
        <x, \du a> : = (\du a) (X) : = X(a) \in \A \ ,
    \end{align*}
and may be extended to the duality 
    \[
        \Lambda^k(\mathcal{D}(\A) ) \cong \Hom_{\A} (\Omega^k (\A), \A) \  . 
    \]
Moreover, one can define the symmetric algebra of derivations as 
    \[ 
    S(\mathcal{D}(\A)) : = \Hom_{\A} (S ( \Omega_{\A / \Kbb} (\A) , \A) \ , 
    \]
which, as was already shown, is a graded Poisson algebra of degree $1$. 

Take $k = 2$, then we have 
    \begin{align}\label{inter. duality}
        \Lambda^2(\mathcal{D}(\A) ) \cong \Hom_{\A} (\Omega^2 (\A), \A) \  . 
    \end{align}
Suppose now that $\A$ is a Poisson algebra equipped with a Poisson bracket
    \begin{align*}
        \{-,-\} \colon \A \times \A \to \A \ .
    \end{align*}
Then the above duality \eqref{inter. duality} provides an element $\pi \in \Lambda^2 (\D(\A))$ by the formula
    \begin{align}\label{Poisson biderivation}
        <\pi, \du a \wedge \du b > = \{ a,b\} \ ,
    \end{align}
for all $a,b \in \A$. Then $\pi$ is called a \emph{Poisson biderivation}. The Jacobi identity for the Poisson bracket $\{-,- \}$ imposes an important constraint on the biderivation $\pi$, which we will discuss below (see \eqref{equation for the Poisson biderivation}.


\section{Homology and cohomology}

We will discuss two types of (co)homologies that can be defined for an algebra $\A$. Firstly we define (co)homology of commutative, associative, unitary algebras (Hochschild), then for Poisson algebras (Lichnerowicz-Poisson). 

\subsection{Hochschild (co)homology}
We use the following notation $\otimes = \otimes_{\Kbb}$ and $\A^{\otimes k} = \underbrace{\A \otimes \ldots \otimes\A}_{\text{$k$-times}}$. We define $\A^{\otimes 0} : = \Kbb$.

\textbf{Hochschild chain complex}. Let $\A$ be a commutative, associative, unitary $\Kbb$-algebra, and let $\mathcal{M}$ be an $\A$-bimodule\footnote{That is, $\A$ acts on $\mathcal{M}$ from left and right.}. Then there is the following chain complex of $\A$-bimodules $\mathcal{M} \otimes \A^{\otimes k}, k \in \Nbb$
   \begin{equation}\label{Hochschild chain complex}
        \begin{tikzcd}
        0 & \mathcal{M} \arrow[swap]{l}{0} & \mathcal{M} \otimes \A  \arrow[swap]{l}{d_1} & \mathcal{M} \otimes \A^{2} \arrow[swap]{l}{d_2} & \arrow[swap]{l}{d_3} \ldots  
        \end{tikzcd} 
    \end{equation}  
The boundary operators ${d_k}, k \in \Nbb, k > 0$ are defined via the "face maps" $\del_i$ as
    \begin{align*}
        d_k  : = \sum_{i = 0}^k (-1)^i \del_i \ , 
    \end{align*}
where the maps $\del_i $ are 
    \begin{align*}
        \del_i (m \otimes a_1 \otimes \ldots \otimes a_k) : = 
            \begin{cases}
            m a_1 \otimes \ldots \otimes a_k , & \text{if } i = 0 \\
            m \otimes a_1 \otimes \ldots \otimes a_i a_{i+1} \otimes \ldots \otimes a_k , & \text{if } 0 < i < k \\
            a_k m \otimes a_1 \otimes \ldots \otimes a_{k-1} , & \text{if } i = k
            \end{cases}
    \end{align*}
for all $m \in \mathbf{M}$ and $a_i \in \A $. This is the \emph{Hochschild chain complex}. 

\begin{exercise}
Check that the above defined boundary maps $d_k$ are 
    \begin{enumerate}
        \item $\Kbb$-multilinear, 
        \item well-defined,
        \item satisfy ${d_k}^2 = 0$
    \end{enumerate}
for all $k \in \Nbb_0$.     
\end{exercise}

\begin{definition}[Hochschild homology]
The homology of the chain complex \eqref{Hochschild chain complex}, denoted $HH_* (\A, \mathcal{M})$, is called the Hochschild homology of $\A$ with coefficients in $\mathcal{M}$. The $k$-th Hochschild homology $\Kbb$-module is 
    \begin{align*}
        HH_k (\A, \mathcal{M}) : = \ker d_k / im_{d_{k+1}} \ . 
    \end{align*}
\end{definition}


\textbf{Hochschild cochain complex}. 
Using the $\Hom_{\Kbb}$-functor, we can construct the Hochschild cochain complex 
   \begin{equation}\label{Hochschild cochain complex}
        \begin{tikzcd}
        0  \arrow{r}{0} &  \mathcal{M} \arrow{r}{d^1} & \Hom_{\Kbb} (\A, \mathcal{M}) \arrow{r}{d^2} & \Hom_{\Kbb} (\A^{\otimes 2}, \mathcal{M}) \arrow{r}{d^3} & \ldots  
        \end{tikzcd}
    \end{equation}  
The coboundary operators ${d^k}$ are defined as
    \begin{align*}
        d^k  : = \sum_{i = 0}^k (-1)^i \del^i \ , 
    \end{align*}
where the maps $\del^i$ are defined for $f \in \Hom_{\Kbb} (\A^{\otimes k}, \mathcal{M})$
    \begin{align*}
        (\del^i f) (a_0 \otimes a_1 \otimes \ldots \otimes a_k) : = 
            \begin{cases}
            a_0 f( a_1 \otimes \ldots \otimes a_k) , & \text{if } i = 0 \\
            f(a_0 \otimes a_1 \otimes \ldots \otimes a_i a_{i+1} \otimes \ldots \otimes a_k) , & \text{if } 0 < i < k \\
            f (a_0 \otimes a_1 \otimes \ldots \otimes a_{k-1})a_k , & \text{if } i = k
            \end{cases}
    \end{align*}
where $a_i \in \A $ for all $i$. This is the \emph{Hochschild cochain complex}.

\begin{definition}[Hochschild cohomology]
The cohomology of the cochain complex \eqref{Hochschild cochain complex}, denoted $HH^* (\A, \mathcal{M})$, is called the Hochschild cohomology of $\A$ with coefficients in $\mathcal{M}$. The $k$-th Hochschild cohomology $\Kbb$-module is 
    \begin{align*}
        HH^k (\A, \mathcal{M}) : = \ker d^{k} / \im d^{k-1} \ . 
    \end{align*}
\end{definition}


\begin{proposition}
The $0$-th Hochschild homology of $\A$ with coefficients in $\mathcal{M}$ satisfy
    \begin{align*}
        HH_0 (\A, \mathcal{M}) & = \mathcal{M} / [\mathcal{M}, \A] \ ,
    \end{align*}
where $[\A, \mathcal{M}] = \{ am - ma| \ a \in \A, m \in \mathcal{M} \}$. In particular 
     \begin{align*}
          HH_0 (\A, \A) = \A / [\A,\A] \ .
     \end{align*}
The $0$-th Hochschild cohomology of $\A$ with coefficients in $\mathcal{M}$ satisfy
    \begin{align*}
        HH^0 (\mathcal{M}, \A) & = \{ m \in M| \ am = ma , \ \forall a \in \A \} \ .
    \end{align*}
In particular,  
    \begin{align*}
        HH^0(\A, \A) = \operatorname{Z}(A) \ ,
    \end{align*}
where $\operatorname{Z}(A)$ is the center of $\A$. 
\end{proposition}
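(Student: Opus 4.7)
The plan is to unwind the definitions of $d_1$ and $d^1$ and observe that all four identities fall out by a direct computation on decomposable tensors. Neither direction requires any machinery beyond what the paper has already set up.

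For the homological part, the complex \eqref{Hochschild chain complex} terminates on the right with the zero map $\mathcal{M}\to 0$, so $\ker d_0=\mathcal{M}$ trivially, and everything reduces to computing $\operatorname{im} d_1$. Evaluating the face maps on a decomposable tensor $m\otimes a\in\mathcal{M}\otimes\A$, I would read $\partial_0(m\otimes a)=ma$ and $\partial_1(m\otimes a)=am$ straight off the defining formula, so that $d_1(m\otimes a)=ma-am$. The image of $d_1$ is therefore the $\Kbb$-linear span of these commutators, which by definition is $[\mathcal{M},\A]$, and the quotient gives $HH_0(\A,\mathcal{M})=\mathcal{M}/[\mathcal{M},\A]$. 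Setting $\mathcal{M}=\A$ yields the stated specialization.

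For the cohomological part, I would identify a degree-zero cochain with an element $m\in\mathcal{M}$ via the canonical isomorphism $\Hom_{\Kbb}(\Kbb,\mathcal{M})\cong\mathcal{M}$, $f\mapsto f(1)$. The formula $d^1=\partial^0-\partial^1$ then gives $(d^1 m)(a)=am-ma$, so $\ker d^1=\{m\in\mathcal{M}:am=ma \text{ for all } a\in\A\}$. Since nothing sits in negative degree, $HH^0(\A,\mathcal{M})=\ker d^1$, and the specialization $\mathcal{M}=\A$ is precisely the center $Z(\A)$.

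There is no genuine obstacle: the proposition is a routine definition-chase. The only care required is with sign and index bookkeeping in the face maps (in particular $\partial_1$, which pushes $a$ past $m$ to the left), and with pinning down the convention that $\operatorname{im} d^0=0$. Under the paper's standing assumption that $\A$ is commutative, $[\A,\A]=0$ and $Z(\A)=\A$, so both concrete corollaries collapse to $\A$ itself — which is a consistency check on the two computations rather than extra work.
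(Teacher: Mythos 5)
Your proof is correct and is exactly the routine definition-chase the paper intends (the paper itself states this proposition without proof): $d_1(m\otimes a)=ma-am$ gives $\operatorname{im}d_1=[\mathcal{M},\A]$, and $(d^1m)(a)=am-ma$ gives $\ker d^1=\{m\mid am=ma\}$, with the two specializations following immediately. The sign/indexing care you flag, and the commutative-case consistency check, are exactly the right points to watch.
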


\begin{proposition}
Let $\A$ be a commutative $\Kbb$-algebra and $\mathcal{M}$ an $\A$-bimodule. Then the $1$st Hochschild homology satisfies
    \begin{align*}
        HH_1(\A, \mathcal{M} ) \cong M \otimes_{\A} \Omega_{\A / \Kbb} (\A) \ . 
    \end{align*}
\end{proposition}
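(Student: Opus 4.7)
The plan is to build mutually inverse $\Kbb$-linear maps between $HH_1(\A,\mathcal{M})$ and $\mathcal{M}\otimes_{\A}\Omega_{\A/\Kbb}(\A)$, implicitly assuming (as is standard when the underlying algebra is commutative) that the bimodule $\mathcal{M}$ is symmetric, so that $am=ma$ and consequently $d_1=0$; in particular $\ker d_1=\mathcal{M}\otimes\A$.

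First I would construct the ``forward'' map. Define a $\Kbb$-bilinear map $\psi_0\colon \mathcal{M}\otimes\A\to \mathcal{M}\otimes_{\A}\Omega_{\A/\Kbb}(\A)$ by $\psi_0(m\otimes a):=m\otimes \du a$. The key verification is that $\psi_0$ kills the image of $d_2$. Indeed, a direct computation gives
\begin{align*}
\psi_0(d_2(m\otimes a\otimes b))&=\psi_0(ma\otimes b-m\otimes ab+mb\otimes a)\\
&=ma\otimes \du b-m\otimes(a\,\du b+b\,\du a)+mb\otimes\du a,
\end{align*}
and this vanishes after pushing the leading $\A$-factors across the balanced tensor product ($m\otimes a\,\du b=ma\otimes\du b$ and $m\otimes b\,\du a=mb\otimes\du a$). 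Combined with $\ker d_1=\mathcal{M}\otimes\A$, this produces a well-defined map $\bar\psi\colon HH_1(\A,\mathcal{M})\to \mathcal{M}\otimes_{\A}\Omega_{\A/\Kbb}(\A)$.

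Next I would construct the inverse using the universal property of $\Omega_{\A/\Kbb}(\A)$ summarized in diagram \eqref{universal derivative}. Because $\A$ is commutative and $\mathcal{M}$ is symmetric, the group $HH_1(\A,\mathcal{M})$ carries a natural $\A$-module structure by $a\cdot[m\otimes b]:=[ma\otimes b]$; well-definedness here is a short check that $a\cdot\operatorname{im} d_2\subseteq \operatorname{im} d_2$. For each fixed $m\in\mathcal{M}$, the map $D_m\colon \A\to HH_1(\A,\mathcal{M})$, $D_m(a):=[m\otimes a]$, is then a $\Kbb$-linear derivation: the identity
\[
[m\otimes ab]-[ma\otimes b]-[mb\otimes a]=[-d_2(m\otimes a\otimes b)]=0
\]
gives $D_m(ab)=a\,D_m(b)+b\,D_m(a)$. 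By the universal property of Kähler differentials there is a unique $\A$-linear $\tilde D_m\colon \Omega_{\A/\Kbb}(\A)\to HH_1(\A,\mathcal{M})$ with $\tilde D_m(\du a)=[m\otimes a]$. Assembling these for varying $m$ yields a map $\mathcal{M}\times\Omega_{\A/\Kbb}(\A)\to HH_1(\A,\mathcal{M})$ which is $\Kbb$-bilinear, and whose $\A$-balance follows from the $\A$-linearity of each $\tilde D_m$; this produces $\bar\varphi\colon \mathcal{M}\otimes_{\A}\Omega_{\A/\Kbb}(\A)\to HH_1(\A,\mathcal{M})$.

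Finally, $\bar\psi$ and $\bar\varphi$ are mutually inverse because they act as $[m\otimes a]\leftrightarrow m\otimes \du a$ on the generators of each side. The main obstacle is really a bookkeeping one: checking that $D_m$ genuinely lands inside the $\A$-module $HH_1(\A,\mathcal{M})$ requires unpacking how $d_2$ interacts with the symmetric bimodule structure, and one must be careful that the hypothesis of symmetry (left implicit in the proposition) is used in precisely two places — to ensure $d_1=0$ and to make the central $\A$-action on $HH_1$ well defined. Once those points are made explicit, the remainder is formal manipulation of the universal property.
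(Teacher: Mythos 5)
The paper states this proposition without proof, so there is nothing to compare against; judged on its own, your argument is correct and is the standard one (it is essentially the proof of this fact in Loday's treatment of Hochschild homology). Your two constructions check out against the paper's face maps: $d_1(m\otimes a)=ma-am$ vanishes for a symmetric bimodule, and $d_2(m\otimes a\otimes b)=ma\otimes b-m\otimes ab+bm\otimes a$ is exactly what makes $\psi_0$ kill boundaries and what makes $D_m(a)=[m\otimes a]$ a derivation valued in the $\A$-module $HH_1(\A,\mathcal{M})$. Two points are worth making explicit. First, you are right that the proposition as printed is missing the hypothesis that $\mathcal{M}$ be a \emph{symmetric} bimodule; without it the statement is not the standard theorem and your argument (correctly) does not go through, so flagging this is a genuine catch rather than pedantry. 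Second, the universal property you invoke is slightly stronger than the one the paper records in diagram \eqref{universal derivative}, which only factors derivations $\A\to\A$; you need the version for derivations valued in an arbitrary $\A$-module (here $HH_1(\A,\mathcal{M})$ itself). That version is standard and true, but since the paper does not state it, a careful write-up should either cite it or note that the same universal construction of $\Omega_{\A/\Kbb}(\A)$ yields it. With those two clarifications the proof is complete.
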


Denote by $\D(\A, \mathcal{M})$ the space of $\Kbb$-linear functions $f \colon \A \to \mathcal{M} $ such that
    \begin{align*}
        f (ab) = af(b) + f(a)b \ . 
    \end{align*}
Denote by $\mathcal{PD} (\A, \mathcal{M})$ the space of $\Kbb$-linear functions $f_m \colon \A \to \mathcal{M} $, which are given by 
    \begin{align*}
        f_m (a) = ma - am \ .
    \end{align*}

\begin{exercise}
Check that the above defined $f_m$ satisfies $f_m \in \D(\A, \mathcal{M})$. 
\end{exercise}

\begin{proposition}
$HH^1(\A, \mathcal{M}) = \D(\A, \mathcal{M}) / \mathcal{PD} (\A, \mathcal{M}) $. 
\end{proposition}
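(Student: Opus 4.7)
The plan is to compute $HH^1(\A,\mathcal{M}) = \ker d^2/\im d^1$ directly from its definition by unwinding the face maps in the Hochschild cochain complex \eqref{Hochschild cochain complex}, and to identify the resulting kernel with $\D(\A,\mathcal{M})$ and the resulting image with $\mathcal{PD}(\A,\mathcal{M})$.

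First I would write out the coboundary $d \colon \Hom_\Kbb(\A,\mathcal{M}) \to \Hom_\Kbb(\A^{\otimes 2},\mathcal{M})$ explicitly. Taking $f \in \Hom_\Kbb(\A,\mathcal{M})$ and forming the alternating sum of the three relevant face maps, I obtain $(df)(a\otimes b) = a f(b) - f(ab) + f(a) b$. The condition $df = 0$ is therefore exactly the Leibniz rule $f(ab) = a f(b) + f(a) b$, so $\ker d = \D(\A,\mathcal{M})$.

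Next I would compute the coboundary $d \colon \mathcal{M} \to \Hom_\Kbb(\A,\mathcal{M})$ applied to $m \in \mathcal{M}$. The analogous unwinding yields $(dm)(a) = am - ma$. Up to the harmless sign change $m \leftrightarrow -m$, this is precisely the map $f_m(a) = ma - am$ defining $\mathcal{PD}(\A,\mathcal{M})$, so $\im d = \mathcal{PD}(\A,\mathcal{M})$. The inclusion $\mathcal{PD}(\A,\mathcal{M}) \subseteq \D(\A,\mathcal{M})$ needed for the quotient to make sense is exactly the content of the exercise preceding the proposition.

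Combining these two identifications gives $HH^1(\A,\mathcal{M}) = \D(\A,\mathcal{M}) / \mathcal{PD}(\A,\mathcal{M})$. I do not expect any real obstacle; the whole argument reduces to a careful reading of the alternating sum of face maps in the lowest degrees, being cautious that the index-zero term produces the left action on $\mathcal{M}$ and the top-index term produces the right action, with the Leibniz relation emerging from the middle term for $C^1$. Notably, commutativity of $\A$ plays no role in the argument, so the statement in fact holds for an arbitrary associative unital $\Kbb$-algebra equipped with a bimodule $\mathcal{M}$.
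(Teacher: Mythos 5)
Your computation is correct, and the paper states this proposition without any proof, so there is nothing to compare it against: identifying $\ker\bigl(d\colon \Hom_\Kbb(\A,\mathcal{M})\to\Hom_\Kbb(\A^{\otimes 2},\mathcal{M})\bigr)$ with $\D(\A,\mathcal{M})$ via the Leibniz rule and $\im\bigl(d\colon\mathcal{M}\to\Hom_\Kbb(\A,\mathcal{M})\bigr)$ with $\mathcal{PD}(\A,\mathcal{M})$ (up to the harmless sign) is the standard and essentially only argument. One small point worth being explicit about: the three-term formula $(df)(a\otimes b)=af(b)-f(ab)+f(a)b$ that you use is the correct one, but it does not quite follow from the paper's face-map definition read literally (the case $0<i<k$ is empty for $k=1$, so the middle term $f(ab)$ is missing there); you have silently repaired that off-by-one in the indexing, and your observation that commutativity of $\A$ is never used is also correct.
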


\subsection{Lichnerowicz-Poisson cohomology}

Let $\A$ be a commutative Poisson algebra with the Poisson bracket $\{-,-\}$ and consider $\Lambda( \D(\A)) = \bigoplus_{k \geq 0} \Lambda^k( \D(\A))$ defined as follows. For $k = 0$, we define $\Lambda^0 (\D(\A)) = \A$. An element $X \in \Lambda^k( \D(\A) ), k > 0$, is a multilinear, antisymmetric mapping
    \begin{align*}
        X \colon \A^k \to \A 
    \end{align*}

and the mapping $\del_X \colon \A \to \A $ given by
    \begin{align*}
        \del_X (a) : = X(a, a_1, \ldots, a_{k-1}) 
    \end{align*}
is a derivation. 
    

\textbf{Schouten-Nijenhuis bracket}. Let $X,Y \in \Lambda(\D(\A))$ be decomposable,
    \begin{align*}
        X = x_1 \wedge \ldots \wedge x_k && Y = y_1 \wedge \ldots \wedge y_l \ ,
    \end{align*}
where all $x_i, y_j \in \D(\A)$. The Schouten-Nijenhuis bracket $ [\![-  , - ]\!]$ is given by
    \begin{align*}
        [\![X , Y ]\!] : =  \sum_{i,j} (-1)^{i+j} [x_i, y_j] x_1 \wedge \ldots \wedge \hat{x_i} \wedge \ldots \wedge x_k \wedge y_1 \wedge \ldots \wedge \hat{y_j} \wedge \ldots \wedge y_l \ ,
    \end{align*}
where $[-,-]$ is the commutator of differential operators, and the above definition is extended linearly on the whole $\Lambda(\D(\A))$. 

\begin{exercise}
Show that the Schouten-Nijenhuis bracket $ [\![-  , - ]\!]$ satisfies for all $P,Q \in \Lambda(\D(\A))$
    \begin{align}\label{graded anticom. of Schouten-Nijenhuis}
         [\![P  , Q ]\!] = (-1)^{\deg P \deg Q}   [\![Q  , P]\!]
    \end{align}
\end{exercise}

\textbf{Poisson operator}. Let $\pi \in \Lambda^2 (\D(\A))$ be the \emph{Poisson biderivation}, defined by \eqref{Poisson biderivation}. Then the Schouten-Nijenhuis bracket gives an element $[\![\pi , \pi ]\!] \in \Lambda^3 (\D(\A))$. 

\begin{proposition}
The Jacobi identity for the Poisson bracket \eqref{Jacobi identity of the Poisson bracket} implies the triviality of the $3$-derivation: 
    \begin{align}\label{equation for the Poisson biderivation}
        [\![\pi , \pi ]\!] = 0 \ .
    \end{align}
\end{proposition}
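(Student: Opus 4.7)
The plan is to verify the identity by evaluating both sides on exact $3$-forms. Since $[\![\pi,\pi]\!] \in \Lambda^3(\D(\A))$ corresponds, via the duality $\Lambda^3(\D(\A)) \cong \Hom_\A(\Omega^3(\A), \A)$, to an $\A$-linear map on the exterior cube of the Kähler differentials, and since $\Omega^3(\A)$ is generated as an $\A$-module by the decomposable exact elements $\du f \wedge \du g \wedge \du h$ for $f,g,h \in \A$, it is enough to show that
\[
    \langle [\![\pi,\pi]\!],\ \du f \wedge \du g \wedge \du h \rangle = 0
\]
for all $f, g, h \in \A$.

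The core computation is to establish the formula
\[
    \langle [\![\pi,\pi]\!],\ \du f \wedge \du g \wedge \du h \rangle = 2\bigl(\{\{f,g\},h\} + \{\{g,h\},f\} + \{\{h,f\},g\}\bigr) .
\]
To derive this, I would expand $\pi$ as a sum of decomposable bivectors $\pi = \sum_\alpha x_\alpha \wedge y_\alpha$ with $x_\alpha, y_\alpha \in \D(\A)$, and apply the defining formula of the Schouten-Nijenhuis bracket. The resulting terms pair with $\du f \wedge \du g \wedge \du h$ using the duality $\langle X, \du a \rangle = X(a)$. After collecting contractions, one recognises nested Poisson brackets of the form $\pi(\du f,\, \pi(\du g, \du h)) = \{f, \{g,h\}\}$ via \eqref{Poisson biderivation}, summed cyclically, with the combinatorial factor $2$ arising from the symmetry of the Schouten bracket of a bivector with itself (cf.\ \eqref{graded anticom. of Schouten-Nijenhuis} with $\deg P = \deg Q = 2$).

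Once this identity is obtained, the Jacobi identity \eqref{Jacobi identity} makes the right-hand side vanish, and hence $\langle [\![\pi,\pi]\!],\, \du f \wedge \du g \wedge \du h \rangle = 0$ for all $f,g,h \in \A$. By $\A$-linearity of the pairing and the generating statement above, this forces $[\![\pi,\pi]\!] = 0$ in $\Lambda^3(\D(\A))$.

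The main obstacle is the bookkeeping in the core computation: the Schouten-Nijenhuis formula introduces signs $(-1)^{i+j}$ together with reshuffles of wedge products, and the antisymmetry of $\wedge$ must be combined with the antisymmetry of the Poisson bracket in a controlled way to land exactly on the cyclic sum with coefficient $2$. A clean way to manage this is to first carry out the computation on a single decomposable test bivector $\pi = x \wedge y$ (where the four-term expansion of $[\![x\wedge y, x \wedge y]\!]$ collapses to $2\,[x,y] \wedge x \wedge y$), and only then pass to the general $\pi$ by bilinearity; the rest of the argument is purely formal.
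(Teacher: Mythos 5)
The paper itself gives no proof of this proposition --- it is immediately followed by an exercise asking the reader to prove it --- so there is nothing to compare your argument against except the surrounding definitions. Measured against those, your proposal is the standard argument and is correct in outline: the identity
\begin{align*}
    \langle [\![\pi,\pi]\!],\ \du f \wedge \du g \wedge \du h \rangle = 2\bigl(\{\{f,g\},h\} + \{\{g,h\},f\} + \{\{h,f\},g\}\bigr)
\end{align*}
is the right key step, and your check on a single decomposable bivector is consistent with the paper's definition of the Schouten--Nijenhuis bracket: with $X=Y=x\wedge y$ the four terms $(-1)^{i+j}[x_i,y_j]\wedge\cdots$ indeed reduce to $2\,[x,y]\wedge x\wedge y$, since the $i=j$ terms die on $[x,x]=[y,y]=0$ and the two cross terms coincide after one transposition. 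One caveat you should be aware of: the paper defines $\Lambda^2(\D(\A))$ not as the exterior square of the module $\D(\A)$ but as the space of skew-symmetric biderivations $\A^2\to\A$, so the Poisson biderivation $\pi$ obtained from \eqref{Poisson biderivation} need not literally be a finite sum $\sum_\alpha x_\alpha\wedge y_\alpha$; your reduction to decomposables therefore rests on the same unstated assumption as the paper's own ``extend linearly from decomposables'' definition of $[\![-,-]\!]$. If you want to avoid this, note that the paper also records an explicit formula for $\delta_\pi X$ purely in terms of Poisson brackets; applying it with $X=\pi$ (so that $\delta_\pi\pi=[\![\pi,\pi]\!]$) and evaluating on $(f,g,h)$ produces the cyclic sum of iterated brackets directly, with no decomposition of $\pi$ needed, and the Jacobi identity then kills it. Either way, the remaining work in your write-up is the sign bookkeeping for the cross terms $\alpha\neq\beta$, which you correctly identify as the only real labour; the strategy itself has no gap.
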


\begin{exercise}
Prove the above proposition.  
\end{exercise}

The equation \eqref{equation for the Poisson biderivation} is sometimes called \emph{Poisson Master equation}. It can be interpreted as a nilpotency condition (${\delta_\pi}^2 = 0$) for the operator $\delta_\pi \colon \Lambda (\D(\A)) \to \Lambda (\D(\A))$, defined by 
    \begin{align*}
        \delta_\pi X  = [\![\pi , X ]\!] \ . 
    \end{align*}

The operator $\delta_\pi$ is called \emph{Lichnerowicz-Poisson operator} and leads to the following notion of cohomology.

\begin{definition}[Lichnerowicz-Poisson cohomology]
The cohomology of the chain complex 
     \begin{equation*}
        \begin{tikzcd}
        \ldots \arrow{r}{\delta_\pi} & \Lambda^{k-1}(\D(\A)) \arrow{r}{\delta_\pi} & \Lambda^{k}( \D(\A)) \arrow{r}{\delta_\pi} & \Lambda^{k+1}(\D(\A)) \arrow{r}{\delta_\pi} & \ldots \ , 
        \end{tikzcd}
    \end{equation*} 
is called the Lichnerowicz-Poisson cohomology, denoted $HP^*(\A)$.    
\end{definition}

In terms of the Poisson bracket, the Lichnerowicz-Poisson operator can be rewritten as
    \begin{align}
       \left( \delta_\pi X \right) (a_0, a_1, \ldots, a_k) &  = \sum_{i} (-1)^i \{a_i , X (a_0, a_1, \ldots, \hat{a_i}, \ldots, a_k) \} \\
       & \ \ \ + \sum_{0 \leq i < j} X(\{a_i, a_j \} , a_1, \ldots , \hat{a_i}, \ldots, \hat{a_j}, \ldots,a_k) \ .
    \end{align}


\subsection{Low-dimensional Poisson cohomology}
 
 $k = 0$: the operator $\delta_\pi \colon \A \to \D(\A)) \cong \Lambda^1(\D(\A))$ acts as $ a \mapsto \del_a = \{ - , a\} $, where $a\in \A$ can be seen as a Hamiltonian for a Hamiltonian derivation $\{ - , a\}$. We have
    \begin{align*}
        HP^0 \cong \Cas (\A)
    \end{align*}
since $\delta_\pi (a) (b) = 0 $ for all $b \in \A$ if and only if $a \in \Cas (\A)$. Elements of $ HP^0 \cong \Cas (\A)$ are called Hamiltonians with zero dynamics. 

$k = 1$: $1$-coboundary is a derivation $X \in \D(\A)$ which is a Hamiltonian derivation $\del_a$ for some element $a \in \A$. $1$-cocycle is an element  $X \in \D(\A)$ s.t. 
    \begin{align*}
        \delta_\pi (X) = 0 \Rightarrow  [\![\pi , X ]\!] = 0 \ .
    \end{align*}
Derivations $X \in \D(\A)$ which satisfies $ \delta_\pi (X) = 0$ are called \emph{Poisson, or canonical, derivations}. The set of all such elements is denoted $ \operatorname{Can} (\A)$. The equation $ [\![\pi , X ]\!] = 0 $ represents a conservation of the Poisson structure along $X$. If we denote $L_X \pi = [\![ \pi, X ]\!]$, one can easily show that
    \begin{align*}
        < L_X \pi, \du a \wedge \du b > = L_X <\pi ,\du a \wedge \du b > \ .
    \end{align*}

\begin{theorem}[basic theorem of classical mechanics]
    \begin{align*}
        HP^1 (\A) \cong \operatorname{Can} (\A) / \Ham(\A) \ , 
    \end{align*}
where $\Ham(\A)$ is given by \eqref{the set of Hamiltonian derivations}.
\end{theorem}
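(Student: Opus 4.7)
The plan is to unwind the definition of $HP^1(\A)$ and identify the relevant kernel and image with the claimed objects, using the explicit formula for $\delta_\pi$ given at the end of the previous subsection. Since
\[ HP^1(\A) \;=\; \ker\!\bigl(\delta_\pi \colon \Lambda^1(\D(\A)) \to \Lambda^2(\D(\A))\bigr) \,\bigm/\, \operatorname{im}\!\bigl(\delta_\pi \colon \A \to \Lambda^1(\D(\A))\bigr), \]
and since $\Lambda^1(\D(\A)) \cong \D(\A)$ and $\Lambda^0(\D(\A)) = \A$, the task splits into two independent computations.

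First I would compute the image of $\delta_\pi$ on $\Lambda^0 = \A$. For $a \in \A$ viewed as a $0$-cochain, the formula for the Lichnerowicz-Poisson operator collapses (the second sum is empty, and the first sum has only the $i=0$ term) to $(\delta_\pi a)(b) = \{b, a\} = X_a(b)$. Hence $\delta_\pi(a) = X_a$, so the image of $\delta_\pi$ from $\A$ into $\D(\A)$ is exactly $\Ham(\A) = \{X_a \mid a \in \A\}$, as defined in \eqref{the set of Hamiltonian derivations}.

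Second I would compute the kernel of $\delta_\pi$ on $\Lambda^1 = \D(\A)$. For $X \in \D(\A)$, the formula evaluated on $(a_0, a_1)$ yields
\[ (\delta_\pi X)(a_0, a_1) \;=\; \{a_0, X(a_1)\} - \{a_1, X(a_0)\} + X(\{a_0, a_1\}). \]
Hence $\delta_\pi X = 0$ is equivalent to
\[ X(\{a_0, a_1\}) \;=\; \{X(a_0), a_1\} + \{a_0, X(a_1)\} \qquad (\forall a_0, a_1 \in \A), \]
which is precisely the statement that $X$ is a derivation of the Poisson bracket, i.e.\ $X \in \operatorname{Can}(\A)$. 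Alternatively, this is exactly $[\![\pi, X]\!] = 0$, which is the definition of Poisson/canonical derivation given in the text. Either way, $\ker \delta_\pi^1 = \operatorname{Can}(\A)$.

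Combining the two identifications gives $HP^1(\A) \cong \operatorname{Can}(\A)/\Ham(\A)$. The only subtle point — which I regard as the main (mild) obstacle — is making the identification $\Lambda^1(\D(\A)) \cong \D(\A)$ clean, and verifying that the explicit cochain formula for $\delta_\pi$ given at the end of the subsection agrees with the Schouten bracket definition $\delta_\pi X = [\![\pi, X]\!]$ in these low degrees (so that the two candidate descriptions of $\operatorname{Can}(\A)$ coincide). Both checks are essentially combinatorial unpacking of the Schouten-Nijenhuis bracket, and involve no further Poisson-algebraic input beyond the Jacobi identity \eqref{equation for the Poisson biderivation} already established.
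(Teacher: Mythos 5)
Your proposal is correct and follows essentially the same route as the paper, whose text immediately preceding the theorem already identifies the $1$-coboundaries with $\Ham(\A)$ and the $1$-cocycles with $\operatorname{Can}(\A)$, so that the theorem is just the quotient of these two identifications. One small internal slip worth fixing: the cochain formula you display for $(\delta_\pi X)(a_0,a_1)$ (taken verbatim from the paper's displayed formula, which omits the sign $(-1)^{i+j}$ on the second sum) actually yields $X(\{a_0,a_1\}) = -\{X(a_0),a_1\} - \{a_0,X(a_1)\}$ rather than the derivation identity you then assert; with the standard sign $(-1)^{i+j}$ restored (equivalently, computing $[\![\pi,X]\!] = \pm L_X\pi$ directly), the kernel condition is precisely the derivation property and your conclusion $\ker\delta_\pi^1 = \operatorname{Can}(\A)$ stands.
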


\begin{example}[Hamiltonian derivations on polynomial algebra \cite{Vinogradov1975}]
Consider $\A = \Kbb[x_1,x_2]$ with the Poisson bracket $\{ x_1,x_2 \} = x_2$. Then $\del \in \Ham (\A) \subset \D(\A) $ if
    \begin{align}\label{interm. def. of del}
        \del = \alpha_1 \frac{\del }{\del x_1} + \alpha_2 \frac{\del }{\del x_2}
    \end{align}
and exists $h \in \A$ such that 
    \begin{align*}
        \del = \del_h = \{ - , h \} \ . 
    \end{align*}
We will compute the coefficients $\alpha_i$ so that the above equation holds. We have
    \begin{align*}
        \{ x_1 , h \} = \frac{\del x_1 }{\del x_1}  \frac{\del h }{\del x_2} -  \frac{\del x_1 }{\del x_2} \frac{\del h }{\del x_1} = \frac{\del h }{\del x_2} \ .
    \end{align*}
Similarly we get 
    \begin{align*}
        \{ x_2, h \} = - \frac{\del h }{\del x_1} \ . 
    \end{align*}
For $f \in \A$, the bracket with $h$ is
    \begin{align*}
         \{ f , h \} =  \frac{\del f }{\del x_1}  \{ x_1 , h \} +  \frac{\del f }{\del x_2}  \{ x_2 , h \} = \frac{\del f_1 }{\del x_1}  \frac{\del h }{\del x_2} -  \frac{\del f_1 }{\del x_2} \frac{\del h }{\del x_1}  \ .
    \end{align*}
From \eqref{interm. def. of del} we have
    \begin{align*}
        \del (f) = \alpha_1 \frac{\del f}{\del x_1} + \alpha_2 \frac{\del f}{\del x_2} \ , 
    \end{align*}
so to have $\del = \del_h$, one must have
    \begin{align*}
        \alpha_1 = \frac{\del h }{\del x_2} \ , && \alpha_2 = - \frac{\del h }{\del x_1} \ .
    \end{align*}
This means that the set of Hamiltonian derivations on $\A$ is given by 
    \begin{align*}
        \Ham (\A ) = \{ \frac{\del h }{\del x_2}\frac{\del }{\del x_1} - \frac{\del h }{\del x_1} \frac{\del }{\del x_2}| \ h \in \A \} \ .
    \end{align*}
\end{example}

\begin{exercise}
Consider $\A$ from the previous example. Show that
    \begin{align*}
        \operatorname{Can} (\A) / \Ham (\A) \cong \Kbb
    \end{align*}

\noindent Indication: check that if $\del = \alpha_1  \frac{\del}{\del x_1} + \alpha_2  \frac{\del}{\del x_2}  \in \operatorname{Can}(\A) $, then
    \begin{align*}
        \alpha_1 & = c + x_2 \frac{\del h}{\del x_2} \\
        \alpha_2 & = - x_2 \frac{\del h}{\del x_1}
    \end{align*}
where $c\in \Kbb$.
\end{exercise}

\begin{example}
Consider the set
    \[ 
        \Sym (\Hom_{\Kbb} (\D^p(\A), \A) ) = \{ \varphi \colon \D^p(\A) \to \A \ | \ \varphi  \text{ multilinear} \} \ .
    \]
An element of the above set is not only $\Kbb$-linear in all $p$-arguments but also a $\Kbb$-derivation in every argument. Recall that $\mathcal{D}(\A)$ is the $\A$-module of $\Kbb$-derivations of $\A$. 
\end{example}

\subsubsection{Compatible Poisson structures}

As a useful application of Lichnerowicz-Poisson cohomology, consider the following description of \emph{compatible Poisson structures}.

\begin{definition}
Two Poisson structures given by biderivations $\pi$ and $\theta$ are compatible if for all $\lambda \in \Kbb$, $\pi + \lambda \theta $ is again a Poisson structure. 
\end{definition}

As an immediate consequence of the above definition, we have the following proposition.
\begin{proposition}\label{prop: two compatible poisson structures}
If two Poisson structures $\pi , \theta \in \Lambda^2 \left( \D(\A) \right)$ are compatible, then 
    \begin{align*}
        [\![\pi , \theta ]\!] = 0
    \end{align*}
Equivalently, if $\pi$ and $\theta$ are compatible, then they are closed with respect to coderivations in the following sense
    \begin{align*}
        \delta_\pi \theta = \delta_\theta \pi = 0 \ .
    \end{align*}
\end{proposition}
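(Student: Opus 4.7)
The plan is to expand the Poisson Master equation for the pencil $\pi + \lambda \theta$ using bilinearity of the Schouten--Nijenhuis bracket, and then isolate the cross term. By the characterization of Poisson bivectors via \eqref{equation for the Poisson biderivation}, a bivector $\sigma \in \Lambda^2(\D(\A))$ defines a Poisson structure if and only if $[\![\sigma,\sigma]\!]=0$. In particular, the hypothesis that $\pi$ and $\theta$ are themselves Poisson already gives $[\![\pi,\pi]\!] = 0$ and $[\![\theta,\theta]\!] = 0$.

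First, for any $\lambda \in \Kbb$, bilinearity of $[\![-,-]\!]$ yields
\begin{align*}
    [\![\pi + \lambda\theta,\, \pi + \lambda\theta]\!] = [\![\pi,\pi]\!] + \lambda\bigl([\![\pi,\theta]\!] + [\![\theta,\pi]\!]\bigr) + \lambda^2 [\![\theta,\theta]\!].
\end{align*}
Next, I apply the graded commutativity \eqref{graded anticom. of Schouten-Nijenhuis}: since both $\pi$ and $\theta$ have degree $2$, one has $[\![\pi,\theta]\!] = (-1)^{2\cdot 2}[\![\theta,\pi]\!] = [\![\theta,\pi]\!]$, so the middle term collapses to $2\lambda[\![\pi,\theta]\!]$. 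Combining with $[\![\pi,\pi]\!] = [\![\theta,\theta]\!] = 0$, the compatibility hypothesis (applied for any single nonzero $\lambda$) forces
\begin{align*}
    2\lambda [\![\pi,\theta]\!] = 0,
\end{align*}
and since $\operatorname{char}\Kbb = 0$, this gives $[\![\pi,\theta]\!] = 0$.

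For the equivalent reformulation, the Lichnerowicz--Poisson operator is defined by $\delta_\pi X = [\![\pi, X]\!]$, so $\delta_\pi \theta = [\![\pi,\theta]\!] = 0$ directly. The same graded symmetry argument shows $\delta_\theta \pi = [\![\theta,\pi]\!] = [\![\pi,\theta]\!] = 0$, completing the proof.

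There is really no substantive obstacle here; the only thing to be careful about is the sign in the graded commutativity rule \eqref{graded anticom. of Schouten-Nijenhuis}, which happens to make $[\![\cdot,\cdot]\!]$ symmetric on pairs of $2$-vectors and thereby ensures the cross term does not cancel but doubles. One could alternatively extract the conclusion without assuming $[\![\pi,\pi]\!] = [\![\theta,\theta]\!] = 0$ a priori, by evaluating the expansion at three distinct values of $\lambda$ (e.g.\ $\lambda = 0, 1, -1$) and solving the resulting linear system — but since $\pi$ and $\theta$ are already assumed to be Poisson, this is unnecessary.
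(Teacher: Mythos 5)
Your proof is correct and follows exactly the intended argument: the paper leaves this proposition as an exercise, but its remark immediately afterward (that $[\![\pi,\theta]\!]=0$ implies $[\![\pi+\lambda\theta,\pi+\lambda\theta]\!]=0$ for all $\lambda$) is precisely your expansion read in reverse. Your sign check is also right --- with the paper's grading convention (and with the standard one $-(-1)^{(|s|-1)(|t|-1)}$ used later in the text) the Schouten bracket of two bivectors is symmetric, so the cross term doubles rather than cancels, and $\operatorname{char}\Kbb=0$ finishes the argument.
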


\begin{exercise}
Prove the proposition \ref{prop: two compatible poisson structures}. 
\end{exercise}

Notice that if $[\![\pi , \theta ]\!] = 0$ then for all $\lambda \in \Kbb$ 
    \begin{align*}
        [\![\pi + \lambda \theta, \pi + \lambda \theta ]\!] = 0 \ . 
    \end{align*}

Let us fix a biderivation $\pi$. Consider the \emph{Lie derivative} of $\pi$ along $X \in \D (\A)$ 
    \begin{align*}
        L_X \pi : = [\![\pi , X ]\!] \in \Lambda^2 \left( \D(\A) \right) \ .
    \end{align*}
Let $\pi$ be a Poisson biderivation (i.e. it corresponds to a Poisson structure). Then $L_X \pi$ is called an \emph{infinitesimal deformation of the Poisson structure}. Now consider a deformation of $\pi$ of the form
    \begin{align*}
        \pi \mapsto \pi + L_X \pi \ . 
    \end{align*}
By assumption, we have $[\![\pi , \pi ]\!] = 0$ since $\pi$ yields a Poisson structure. If we assume that the deformation also yields a Poisson structure, then the graded antisymmetry of the bracket (see \eqref{graded anticom. of Schouten-Nijenhuis} yields
    \begin{align*}
          0 = [\![\pi +  L_X \pi, \pi +  L_X \pi ]\!] = 2 [\![\pi, L_X \pi ]\!]  + [\![ L_X \pi, L_X \pi ]\!] \ . 
    \end{align*}
Let us denote $\gamma: = L_X \pi$ and recall that $[\![ \theta, X ]\!] = \delta_\theta (X) $. Then the above can be written as 
    \begin{align*}
        \delta_\pi \gamma + \frac{1}{2} [\![ \gamma, \gamma ]\!] = 0 \ . 
    \end{align*}
This equation is called \emph{Maurer-Cartan equation} (for a differential graded algebra $ \Lambda \left( \D(\A) \right)$). 

\subsubsection{Interpretation of $HP^2 (\A)$}

Let $\pi \in \Lambda^2 \left(\D (\A) \right) $ and consider the image of $\delta_\pi \colon \Lambda^1 \left(\D(\A) \right) \to \Lambda^2 \left(\D(\A) \right)$, given by 
    \begin{align*}
        {B_\pi}^2 = \{ \theta \in \Lambda^2 \left(\D (\A) \right) | \ \theta = [\![\pi, X ]\!] \text{ for some $X \in \D (\A)$} \} \ , 
    \end{align*}
The kernel of $\delta_\pi \colon \Lambda^2 \left(\D(\A) \right) \to \Lambda^3 \left(\D(\A) \right)$ is 
    \begin{align*}
        {Z_\pi}^2 = \{ \theta \in \Lambda^2 \left(\D (\A) \right) | \ [\![\pi, \theta ]\!]  = 0 \!] \} \ .
    \end{align*}
By definition 
    \begin{align*}
        HP^2 (\A) = {Z_\pi}^2  /  {B_\pi}^2 \ .
    \end{align*}
We have the following proposition, which yields an interpretation of the second Lichnerowicz-Poisson cohomology. 

\begin{proposition}[\cite{Krasil'shchik1988}]
If $\pi \in \Lambda^2 \left(\D (\A) \right)$ is a Poisson biderivation ( $[\![\pi, \pi]\!] = 0$) and $HP^2 (\A) = 0$, then the set of all structures compatible with $\pi$
    \begin{align}\label{defn. of Comm}
        \operatorname{Comm}(\pi) : = \{ \theta \in \Lambda^2 \left(\D (\A) \right) | \  [\![\theta, \theta ]\!] = [\![\pi,\theta]\!] = 0 \}
    \end{align}
is a set of infinitesimal deformations of $\pi$ along $X \in \D (\A)$. That is, $\theta = L_X \pi$ such that 
    \begin{align*}
        {L_X}^2 (\theta ) = L_Y \pi
    \end{align*}
for some $Y \in \D(\A)$, where ${L_X}^2 = L_X \circ L_X$.
\end{proposition}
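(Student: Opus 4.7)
The proof is a two-step application of the cohomological hypothesis $HP^2(\A) = 0$, with each step reducing to the fact that every $\delta_\pi$-cocycle in $\Lambda^2(\D(\A))$ is a $\delta_\pi$-coboundary.

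The first step is almost tautological. Let $\theta \in \operatorname{Comm}(\pi)$. The compatibility condition gives $[\![\pi,\theta]\!] = \delta_\pi \theta = 0$, so $\theta \in {Z_\pi}^2$. Since $HP^2(\A) = {Z_\pi}^2 / {B_\pi}^2 = 0$, there exists $X \in \D(\A)$ with $\theta = \delta_\pi X = [\![\pi, X]\!] = L_X \pi$. This already realizes $\theta$ as an infinitesimal deformation of $\pi$ along $X$, which is the first assertion.

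For the second claim, the strategy is the same: show that $L_X^2 \theta$ again lies in ${Z_\pi}^2$, and then invoke $HP^2(\A) = 0$ a second time to conclude $L_X^2 \theta = [\![\pi, Y]\!] = L_Y \pi$ for some $Y \in \D(\A)$. Thus the heart of the proof is to compute
\[
[\![\pi,\, L_X^2 \theta]\!] = [\![\pi,\, [\![[\![\theta, X]\!], X]\!]]\!]
\]
and verify that it vanishes. The available ingredients are: (i) the graded Jacobi identity for the Schouten--Nijenhuis bracket (equivalent, via $[\![\pi,\pi]\!]=0$, to $\delta_\pi^2 = 0$), (ii) compatibility $[\![\pi,\theta]\!]=0$, and (iii) the Poisson property $[\![\theta,\theta]\!]=0$. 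Peeling the outer bracket using graded Jacobi produces a nested term $[\![\pi,[\![\theta,X]\!]]\!]$, which by another Jacobi splits into $[\![[\![\pi,\theta]\!],X]\!]$ and $\pm [\![\theta,[\![\pi,X]\!]]\!]$; the former vanishes by (ii), and the latter equals $\pm[\![\theta,\theta]\!]=0$ by (iii). The remaining piece is proportional to $[\![\theta,[\![\theta,X]\!]]\!]$, which by one more application of Jacobi reduces to $[\![[\![\theta,\theta]\!], X]\!]$ and vanishes by (iii). Consequently $L_X^2 \theta \in {Z_\pi}^2$, and $HP^2(\A)=0$ yields the desired $Y$.

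The main obstacle will be bookkeeping the signs in the graded Jacobi identity for the Schouten--Nijenhuis bracket, since $\pi$, $\theta$, $X$, and their iterated brackets carry different (shifted) degrees and each nested application of Jacobi introduces a sign. Once this is organized cleanly, no deeper input is needed beyond the three cancellation identities above and the two applications of the vanishing of $HP^2(\A)$.
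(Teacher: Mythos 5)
Your proposal is correct and follows essentially the same route as the paper: one application of $HP^2(\A)=0$ to write $\theta=\delta_\pi X=L_X\pi$, a middle step verifying that an iterated Lie derivative is $\delta_\pi$-closed using only the graded Jacobi identity together with the cancellations $[\![\pi,\theta]\!]=0$ and $[\![\theta,\theta]\!]=0$, and a second application of $HP^2(\A)=0$. The only divergence is that you check closedness of $[\![[\![\theta,X]\!],X]\!]$ itself, whereas the paper's computation establishes $\delta_\pi[\![\theta,X]\!]=0$ (it silently treats $L_X^2\pi=L_X\theta$ in place of $L_X^2\theta$) via the anticommutation $\delta_\pi\circ\delta_\theta=-\delta_\theta\circ\delta_\pi$; the cancellation mechanism is identical and your version matches the literal statement more closely.
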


\begin{proof}
Let $\theta \in \operatorname{Comm}(\pi)$, then $\delta_\pi \theta = 0$ so 
    \begin{align*}
        \operatorname{Comm}(\pi) \subset \ker \delta_\pi \ .
    \end{align*}
If $\theta = \delta_\pi X$ then $0 = [\theta] \in HP^2(\A)$. Suppose $\theta \in \operatorname{Comm}(\pi)$ and $HP^2(\A) = 0$. Then $\theta = \delta_\pi X = [\![\pi, X ]\!] = L_X \pi$. But at the same time $[\![\theta, \theta ]\!] = [\![L_X \pi, L_X \pi ]\!] = 0$. This is equivalent to $(L_X \circ L_X) (\theta) \in \ker \delta_\pi$ since
    \begin{align*}
        L_X \left(L_X \pi  \right) = L_X \left( [\![\pi, X ]\!] \right)  = [\![ [\![\pi, X ]\!] , X  ]\!] = [\![ \theta , X ]\!] \ , 
    \end{align*}
and 
    \begin{align*}
        \delta_\pi \left( [\![ \theta , X ]\!] \right)  = \delta_\pi \left( \delta_\theta X \right) = - \delta_\theta \left( \delta_\pi X \right) =  [\![ \delta_\pi X , \delta_\pi X ]\!] = 0 \ . 
    \end{align*}
Hence if $HP^2 (\A) = 0$ then there is $Y \in \D (\A) $ such that ${L_X}^2 \theta = L_Y (\pi) = \delta_\pi (Y)$.
\end{proof}

From the above, we obtain a mapping
    \begin{align*}
        \tau \colon \operatorname{Comm} (\pi ) \to HP^2(\A) 
    \end{align*}
and 
    \begin{align*}
        \tau (\theta) = 0 \iff \theta = \delta_\pi (X)  
    \end{align*}
for some $X \in \D (\A)$. The image of $\tau$ is described by the following proposition. 

\begin{proposition}
Let $\del \in HP^2 (\A)$. Then
    \begin{align*}
        \del \in \im \tau \iff \del = \Delta + \delta_\pi \left( \D (\A) \right) \ , 
    \end{align*}
where the representative $\Delta \in \Lambda^2 \left( \D (\A) \right)$ of the class $\del$ satisfies 
    \begin{enumerate}
        \item  $[\![ \del , \del ]\!] = [\![ \Delta , \Delta ]\!] = \del_\pi (\alpha)$, where $\alpha \in \Lambda^2 \left( \D (\A) \right) $, 
        \item there is $X \in \D(\A)$ such that $\alpha + 2 L_X (\Delta) - {L_X}^2(\pi) = \ker \delta_\pi$. 
    \end{enumerate}
\end{proposition}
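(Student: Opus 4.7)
The implication $(\Rightarrow)$ is essentially tautological: if $\del = \tau(\theta)$ for some $\theta \in \operatorname{Comm}(\pi)$, take $\Delta := \theta$, $\alpha := 0$, $X := 0$; then $[\![\Delta,\Delta]\!] = 0 = \delta_\pi(\alpha)$, and $\alpha + 2L_X\Delta - L_X^2\pi = 0 \in \ker\delta_\pi$, so conditions (1) and (2) hold trivially. The real content lies in the converse. For $(\Leftarrow)$, given a representative $\Delta$ of $\del$ together with $\alpha$ and $X$ satisfying (1)--(2), my plan is to produce the required element of $\operatorname{Comm}(\pi)$ by a cohomological adjustment: set
\[ \theta := \Delta + L_X\pi = \Delta + \delta_\pi(X). \]
Then $[\theta] = [\Delta] = \del$ automatically, and the first defining condition of $\operatorname{Comm}(\pi)$, namely $[\![\pi,\theta]\!] = 0$, is free of charge: $\Delta$ is a cocycle by hypothesis, and $\delta_\pi^2(X) = 0$ is a consequence of the Poisson master equation $[\![\pi,\pi]\!] = 0$. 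Thus the whole proof reduces to verifying $[\![\theta,\theta]\!] = 0$.

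Expanding bilinearly and using the symmetry of the Schouten--Nijenhuis bracket on bivectors from~\eqref{graded anticom. of Schouten-Nijenhuis},
\[ [\![\theta,\theta]\!] = [\![\Delta,\Delta]\!] + 2[\![\Delta, L_X\pi]\!] + [\![L_X\pi, L_X\pi]\!]. \]
I plan to rewrite each summand as $\delta_\pi$ of something explicit. The first equals $\delta_\pi(\alpha)$ by hypothesis~(1). For the middle term, applying the graded Leibniz rule for $L_X$ to the vanishing bracket $[\![\pi,\Delta]\!] = 0$ yields $[\![\Delta, L_X\pi]\!] = \pm\delta_\pi(L_X\Delta)$. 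For the last, differentiating $[\![\pi,\pi]\!] = 0$ twice with $L_X$, again using the graded Leibniz rule, gives $[\![L_X\pi, L_X\pi]\!] = \pm\delta_\pi(L_X^2\pi)$. Collecting the three contributions,
\[ [\![\theta,\theta]\!] = \delta_\pi\bigl(\alpha + 2L_X\Delta - L_X^2\pi\bigr), \]
which vanishes precisely because condition~(2) places the argument in $\ker\delta_\pi$ (I read the ``$=\ker\delta_\pi$'' in the statement as ``$\in\ker\delta_\pi$''). Hence $\theta \in \operatorname{Comm}(\pi)$ and $\tau(\theta) = \del$.

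The only genuinely delicate point is sign bookkeeping in the graded Leibniz rule $L_X[\![A,B]\!] = [\![L_XA, B]\!] \pm [\![A, L_XB]\!]$ for the Schouten--Nijenhuis bracket; this is exactly what determines the specific linear combination $\alpha + 2L_X\Delta - L_X^2\pi$ appearing in hypothesis~(2), and explains why the factor $2$ and those particular signs arise. Once the signs are tracked carefully, the remainder of the argument is a purely formal manipulation of cocycles, coboundaries, and the identity $\delta_\pi^2 = 0$.
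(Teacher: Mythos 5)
Your argument is correct in substance, and the central identity you rely on,
\begin{align*}
[\![\Delta + \delta_\pi(X),\, \Delta + \delta_\pi(X)]\!] \;=\; \delta_\pi\bigl(\alpha + 2 L_X(\Delta) - {L_X}^2(\pi)\bigr)\ ,
\end{align*}
is exactly the computation at the heart of the paper's proof. The difference is which implication carries the weight. The paper only writes out $(\Rightarrow)$: starting from $\tau(\theta) = \Delta + \delta_\pi(X)$ with $\theta \in \operatorname{Comm}(\pi)$, it expands $[\![\tau(\theta),\tau(\theta)]\!] = 0$ term by term and reads off conditions (1) and (2) for an \emph{arbitrary} representative $\Delta$ of the class; the converse is not addressed at all. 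You instead dispose of $(\Rightarrow)$ tautologically (taking $\Delta = \theta$, $\alpha = 0$, $X = 0$) and spend your effort on $(\Leftarrow)$, constructing $\theta = \Delta + \delta_\pi(X)$ and running the same expansion backwards to get $[\![\theta,\theta]\!] = 0$ from condition (2), with $[\![\pi,\theta]\!] = 0$ coming for free from $\delta_\pi^2 = 0$. In that sense your write-up is complementary to, and arguably more complete than, the paper's: together the two computations (which are the same algebra read in opposite directions) give the full equivalence.

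Two caveats. First, your forward direction only shows that \emph{some} representative of $\del$ satisfies (1)--(2); if the proposition is read as asserting this for every representative $\Delta$ (which is how the paper's computation treats it, since $\Delta$ there is arbitrary and $X$ is chosen to match), then the tautological choice does not suffice and you need the paper's expansion in that direction too. Second, you leave the graded Leibniz signs as $\pm$, but those signs are precisely what produces the specific combination $\alpha + 2L_X(\Delta) - {L_X}^2(\pi)$ in condition (2); without fixing them the claimed cancellation is not yet verified. (In fairness, the paper's own proof is not innocent here: it silently flips a sign between $[\![\Delta,\delta_\pi(X)]\!]$ and $-[\![\Delta,L_X\pi]\!]$ despite having defined $\delta_\pi(X) = [\![\pi,X]\!] = L_X\pi$, so the sign bookkeeping deserves an honest pass in either version.)
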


\begin{proof}
Let $\Delta \in \Lambda^2 \left( \D (\A) \right)$ be such that 
    \begin{align*}
        \del = \Delta + \delta_\pi \left( \D (\A) \right)  \in HP^2(\A) \ .
    \end{align*}
If $\del \in \im \tau$, then there is a $\theta \in \operatorname{Comm}(\pi)$ (see \eqref{defn. of Comm}) such that 
    \begin{align*}
        \tau (\theta) = \Delta + \delta_\pi \left( \D (\A) \right) \ . 
    \end{align*}
Because
    \begin{align*}
        0 =  [\![ \del , \del ]\!] \Rightarrow  [\![ \Delta , \Delta ]\!] = \del_\pi (\alpha) \in HP^3(\A)
    \end{align*}
and $\tau [\![ \theta , \theta ]\!] = 0 $, we have
    \begin{align*}
         [\![ \tau(\theta) , \tau(\theta) ]\!] & = [\![ \Delta + \delta_\pi (X)  ,  \Delta + \delta_\pi (X)  ]\!] \\
         & =  [\![ \Delta, \Delta ]\!] + [\![ \Delta, \delta_\pi (X) ]\!] + [\![ \delta_\pi (X), \Delta ]\!] +  [\![ \delta_\pi (X) , \delta_\pi (X) ]\!] \\
         & = \delta_\pi (\alpha) - [\![ \Delta, L_X \pi ]\!] - [\![ L_X \pi , \Delta]\!] - \delta_\pi {L_X}^2 (\pi) \\
         & = \delta_\pi \left( \alpha -  {L_X}^2 (\pi) \right) - 2  [\![ L_X \pi , \Delta]\!] \\
         & =  \delta_\pi \left( \alpha -  {L_X}^2 (\pi) \right) - 2  [\![  [\![\pi, X ]\!], \Delta ]\!] \\
         & = \delta_\pi \left( \alpha + 2 L_X (\Delta) - {L_X}^2(\pi) \right) = 0 \ ,
    \end{align*}
since $[\![ \tau(\theta) , \tau(\theta) ]\!] = 0$.    
\end{proof}

\subsection{Poisson homology}

The following construction is due to Brylinski \cite{Brylinski88} and \cite{Koszul1985}. Let $\A$ be a commutative $\Kbb$-algebra and $\Omega (\A) = \Lambda (\Omega_{\A/\Kbb})$ be the exterior algebra of $\A$. The boundary morphism $d_\pi \colon \Omega^k (\A) \to \Omega^{k-1} (\A) $ is defined by the "homotopy-like" formula 
    \begin{align}\label{boundary operator d_pi}
        d_\pi \omega = (i_\pi \circ \du) (\omega) - (\du \circ i_\pi) (\omega) \ ,
    \end{align}
where $i_\pi$ is the contraction operator with respecet to $\pi$, i.e .$i_\pi : = <\pi ,- > $. It is straightforward to check that this operator satisfies $ d_\pi \circ d_\pi = 0$. In coordinates, we can describe $d_\pi$ on a decomposable $\omega = a_0, \du a_1 \wedge \ldots \wedge a_k $ by the general formula
    \begin{align*}
        d_\pi \omega & = \sum_{l = 1}^{k+1}(-1)^{l+1} \{ a_0, a_i \} \wedge \du a_1 \wedge  \ldots \wedge \hat{\du a}_l\wedge \ldots \wedge \du a_{k+1} \\
            &\ \ \ - \sum_{i < j} (-1)^{i+j} \du \left(\{ a_i, a_j\} \right) \wedge \du a_1 \wedge  \ldots \wedge \hat{\du a}_i \wedge \ldots \wedge \hat{\du a}_j \wedge \ldots \wedge \du a_{k+1} \ .
    \end{align*}
By definition, 
    \begin{align*}
        d_\pi (a_0 da_1) = i_\pi (da_0 \wedge da_1) = \{a_0, a_1 \} \ .
    \end{align*}
Note that the Jacobi identity for the triple $a_0, a_1, a_2$
    \begin{align*}
        \{ a_0, \{ a_1, a_2 \} \} + \{ a_1, \{ a_2, a_O \} \} + \{ a_2, \{ a_0, a_1 \} \} = 0 
    \end{align*}
implies $(d_\pi)^2 = 0$.


\subsection{Duality}

Let $\pi$ be a \emph{symplectic} (or \emph{non-degenerate}) Poisson structure on $\A$, meaning that the Hamiltonian map
    \begin{align*}
        \Gamma_\pi \colon \Omega^1_{\A /\Kbb} \to \D (\A) 
    \end{align*}
defined by
    \begin{align*}
        \Gamma_\pi (\alpha) & = <\pi , \alpha> \in \D (\A)
    \end{align*}
is an isomorphism, and there exists the inverse ${\Gamma_\pi}^{-1} \colon \D (\A) \to \Omega^1 (\A)$. The inverse is given by
    \begin{align*}
        {{\Gamma}_\pi}^{-1} (\del) & = \alpha_ {\del} \text{ such that } <\pi , \alpha_{\del}> = \del \ .
    \end{align*}
    
    
In this case, one can check that
    \begin{align*}
        \delta_\pi = \Gamma_\pi \circ \du \circ {\Gamma_\pi}^{-1}  \ , 
    \end{align*}
where $\delta_\pi$ is the Lichnerowicz-Poisson operator. The above equation is symbolical. It can be described more precisely by the following commutative diagram 
   \begin{equation*}
        \begin{tikzcd}
            \Omega^1 (\A)  \arrow[swap]{d}{\du} \arrow{r}{\Gamma_\pi} & \Lambda^1 \left(\D(\A) \right)  \arrow{d}{\delta_\pi} \\ 
            \Omega^2 (\A) \arrow{r}{ \Lambda^2\Gamma_\pi} & \Lambda^2 \left(\D(\A) \right) 
        \end{tikzcd}
    \end{equation*}
which holds for any Poisson biderivation $\pi$.  

\begin{proposition}\label{duality proposition}
For general $k$, the following diagram commutes
    \begin{equation*}
        \begin{tikzcd}
            \Omega^k (\A)   \arrow[swap]{d}{\du} \arrow{r}{\Lambda^k \Gamma_\pi} & \Lambda^k \left(\D(\A) \right)  \arrow{d}{\delta_\pi} \\ 
            \Omega^{k+1} (\A) \arrow{r}{\Lambda^{k+1} \Gamma_\pi } & \Lambda^{k+1} \left(\D(\A )\right) 
        \end{tikzcd}
    \end{equation*}
\end{proposition}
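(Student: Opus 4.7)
The plan is to reduce the identity $\delta_\pi \circ \Lambda^k\Gamma_\pi = \Lambda^{k+1}\Gamma_\pi \circ \du$ to a check on algebra generators, by exploiting that both sides are graded derivations of degree $+1$. First I would note that, since $\pi$ is non-degenerate, $\Gamma_\pi \colon \Omega^1_{\A/\Kbb} \to \D(\A)$ is an $\A$-module isomorphism, so its exterior powers $\Lambda^\bullet\Gamma_\pi \colon \Omega^\bullet(\A) \to \Lambda^\bullet(\D(\A))$ assemble into a graded $\A$-algebra isomorphism intertwining the two wedge products. Consequently the problem is equivalent to showing that $\du$ and the transported operator $(\Lambda^\bullet\Gamma_\pi)^{-1} \circ \delta_\pi \circ \Lambda^\bullet\Gamma_\pi$ coincide on $\Omega^\bullet(\A)$.

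The next step is to record that both operators are graded derivations of degree $+1$. For $\du$ this is the Leibniz rule stated in the paper. For $\delta_\pi = [\![\pi,-]\!]$ this follows from the Gerstenhaber (graded Leibniz) property of the Schouten--Nijenhuis bracket: with $\pi$ of degree $2$ one has
\begin{align*}
[\![\pi, Y \wedge Z]\!] = [\![\pi,Y]\!] \wedge Z + (-1)^{\deg Y}\, Y \wedge [\![\pi,Z]\!],
\end{align*}
so $\delta_\pi$ shifts degree by one and satisfies exactly the same sign rule as $\du$. Because both sides of the proposed identity are graded derivations of the same degree on isomorphic graded algebras, it suffices to check the identity on a set of algebra generators of $\Omega^\bullet(\A)$, namely on elements of $\A = \Omega^0(\A)$ and on exact one-forms $\du a$.

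The base case $k=0$ is the definition of $\Gamma_\pi$ translated through the Hamiltonian map. For $a \in \A$ one has $\delta_\pi(a) = [\![\pi,a]\!] = X_a$, while $\Lambda^1\Gamma_\pi(\du a) = \langle \pi, \du a\rangle = X_a$ by \eqref{Poisson biderivation}, so the two agree (modulo the fixed sign convention used throughout for $X_a = \{-,a\}$). For the generators in degree one, on $\du a$ the right-hand side vanishes since $\du^2 = 0$, so I must show $\delta_\pi(X_a) = 0$. This is the classical statement that every Hamiltonian derivation preserves the Poisson bivector; it follows from the graded Jacobi identity of the Schouten--Nijenhuis bracket applied to $(\pi,\pi,a)$ together with the Poisson master equation $[\![\pi,\pi]\!] = 0$:
\begin{align*}
[\![\pi, X_a]\!] = [\![\pi,[\![\pi,a]\!]]\!] = \tfrac12 [\![[\![\pi,\pi]\!], a]\!] = 0.
\end{align*}

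With the identity verified on generators and the derivation property of both sides established, a straightforward induction on the word length in the generators propagates commutativity to all of $\Omega^k(\A)$ for every $k$. The part I expect to be the main obstacle is the careful bookkeeping of signs: one must match the degree conventions on $\Lambda^\bullet(\D(\A))$ (where $\delta_\pi$ shifts total degree by $+1$ because $\pi$ has Schouten degree $2$) with the conventions on $\Omega^\bullet(\A)$, and verify that $\Lambda^k\Gamma_\pi$ is not merely an $\A$-linear isomorphism but an algebra map intertwining the respective wedge products; once those normalizations are fixed, the reduction to generators makes the proof essentially formal.
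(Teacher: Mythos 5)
Your proposal is correct, and it rests on exactly the same two identities as the paper's proof: $\delta_\pi(a)=X_a$ for $a\in\A$ and $[\![\pi,X_a]\!]=0$, combined with the graded Leibniz rule for $[\![\pi,-]\!]$. The packaging is slightly different. The paper evaluates both composites directly on a decomposable $\omega=a_0\,\du a_1\wedge\ldots\wedge\du a_k$ and expands $\delta_\pi(a_0\,\del_{a_1}\wedge\ldots\wedge\del_{a_k})$ in one line, keeping only the term $-[\![a_0,\pi]\!]\wedge\del_{a_1}\wedge\ldots\wedge\del_{a_k}=\del_{a_0}\wedge\ldots\wedge\del_{a_k}$; the terms involving $[\![\del_{a_i},\pi]\!]$ are silently discarded. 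You instead organize the argument as ``two graded derivations of degree $+1$ over the algebra map $\Lambda^\bullet\Gamma_\pi$ agree on generators,'' and you make the discarded terms explicit by proving $[\![\pi,X_a]\!]=\tfrac12[\![[\![\pi,\pi]\!],a]\!]=0$ from the graded Jacobi identity and the master equation. That is the one step the paper's computation genuinely depends on without stating, so your version is the more complete of the two; the price is the extra bookkeeping you flag (matching the sign in $[\![\pi,Y\wedge Z]\!]=[\![\pi,Y]\!]\wedge Z+(-1)^{\deg Y}Y\wedge[\![\pi,Z]\!]$ against $\du(\omega_1\wedge\omega_2)=\du\omega_1\wedge\omega_2+(-1)^k\omega_1\wedge\du\omega_2$, and the sign convention in $\Gamma_\pi(\du a)=\del_a$ versus $\langle\pi,\du a\wedge\du b\rangle=\{a,b\}$, about which the paper itself is loose). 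One small remark: your reduction is phrased via conjugation by $(\Lambda^\bullet\Gamma_\pi)^{-1}$, which uses non-degeneracy of $\pi$; this is harmless in the ``Duality'' section where $\pi$ is assumed symplectic, but the same generators-and-derivations argument works without inverting $\Gamma_\pi$, and indeed the paper notes the diagram commutes for any Poisson biderivation.
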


\begin{proof}
To check the commutativity of the diagram, consider decomposable $\omega = a_0 \du a_1 \wedge \ldots \wedge  \du a_k \in \Omega^k (\A) $. Then  
    \begin{align*}
        \Lambda^{k+1} \Gamma_\pi (\du \omega) =  \Lambda^{k+1} \Gamma_\pi (\du a_0 \wedge \du a_1 \wedge \ldots \wedge  \du a_k ) = \del_{a_0} \wedge \del_{a_1} \wedge \ldots \wedge \del_{a_k} \ , 
    \end{align*}
and 
    \begin{align*}
        \delta_\pi (\Lambda^k \Gamma_\pi (\omega) ) & =  \delta_\pi (a_0 \del_{a_1} \wedge \ldots \wedge \del_{a_k} ) \\
         & = - [\![ a_0 \del_{a_1} \wedge \ldots \wedge \del_{a_k}, \pi ]\!] \\
         & = - [\![ a_0 , \pi ]\!] \wedge \del_{a_1} \wedge \ldots \wedge \del_{a_k} \\
         & = \del_{a_0} \wedge \del_{a_1} \wedge \ldots \wedge \del_{a_k} \ .
    \end{align*}
Since the above can be extended linearly, the diagram commutes.    
\end{proof}

\begin{remark}
If $\A = C^\infty (M)$ is the algebra of smooth function on a smooth manifold $M$, then it was shown in \cite{Brylinski88} that
    \begin{align*}
        HP_k (\A) \cong HP^{2n-k} (\A) \ .
    \end{align*}
Moreover, by the proposition \ref{duality proposition}, the latter group is isomorphic with the de Rham cohomology 
    \begin{align*}
        HP^{2n-k} (\A) = H_{DR}^{2n-k} (M) \ .
    \end{align*}
\end{remark}

In the following lemma, we use the notion of a \emph{graded commutator} in a~graded algebra 
    \begin{align*}
        [\del_1 , \del_2] : = \del_1 \circ \del_2 - (-1)^{\deg \del_1 \deg \del_2} \del_2 \circ \del_1 \ . 
    \end{align*}
We have $\deg d_\pi = -1, \deg \du = 1, \deg i_\pi = - 2, L_\pi = 1$

\begin{lemma}
The operator $d_\pi \colon \Omega^k (\A) \to \Omega^{k-1} (\A) $ commutes in graded sense with $\du $ and $i_\pi$, i.e.
    \begin{align*}
        [d_\pi , \du ] & = d_\pi \circ \du + \du \circ d_\pi = 0 \ , \\
        [d_\pi , i_\pi ] & = d_\pi \circ i_\pi - i_\pi \circ d_\pi = 0  \ .
    \end{align*}
\end{lemma}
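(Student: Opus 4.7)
The plan is to work in the graded Lie algebra of endomorphisms of $\Omega^\bullet (\A)$ with bracket $[A, B] := AB - (-1)^{|A||B|} BA$, and to recognise $d_\pi$ itself as such a graded commutator. Since $|i_\pi| = -2$ and $|\du| = 1$, one has $(-1)^{|i_\pi||\du|} = 1$, so the defining formula \eqref{boundary operator d_pi} is exactly $d_\pi = [i_\pi, \du]$, with $|d_\pi| = -1$. This reformulation is what lets me exploit the graded Jacobi identity and (ultimately) the Poisson master equation $[\![\pi, \pi]\!] = 0$.

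For the first identity I would argue by direct expansion, using only $\du^2 = 0$:
\begin{align*}
d_\pi \circ \du + \du \circ d_\pi & = (i_\pi \du - \du i_\pi) \du + \du (i_\pi \du - \du i_\pi) \\
& = i_\pi \du^2 - \du i_\pi \du + \du i_\pi \du - \du^2 i_\pi = 0.
\end{align*}
The same conclusion also follows from graded Jacobi applied to $(i_\pi, \du, \du)$: since $[\du, \du] = 2\du^2 = 0$, Jacobi collapses to $2 [d_\pi, \du] = 0$, hence the identity in characteristic zero. Note this step does not use that $\pi$ is Poisson.

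The second identity is genuinely deeper and must invoke the Poisson master equation \eqref{equation for the Poisson biderivation}. Expanding gives
\begin{align*}
[d_\pi, i_\pi] = d_\pi i_\pi - i_\pi d_\pi = 2\, i_\pi \du\, i_\pi - \du\, i_\pi^2 - i_\pi^2\, \du,
\end{align*}
which does not vanish on purely formal grounds: a direct check shows $i_\pi^2 = i_{\pi \wedge \pi}$ (contraction with the $4$-vector $\pi \wedge \pi$), and graded Jacobi applied to $(i_\pi, \du, i_\pi)$ only gives the tautology $[\du, [i_\pi, i_\pi]] = 0$. My plan is instead to invoke the Koszul-type identity $[L_P, i_Q] = i_{[\![P, Q]\!]}$ (graded commutator of operators on $\Omega^\bullet(\A)$), valid for arbitrary multivectors $P, Q \in \Lambda (\D(\A))$, where $L_P := [\du, i_P]$. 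Applied with $P = Q = \pi$, the left-hand side equals $\pm [d_\pi, i_\pi]$ (since $L_\pi = \du i_\pi - i_\pi \du = -d_\pi$), and the right-hand side is $\pm i_{[\![\pi, \pi]\!]}$, which vanishes by the Poisson master equation.

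The main obstacle will be establishing the Koszul-type identity $[L_P, i_Q] = i_{[\![P, Q]\!]}$ itself. The base case $|P| = |Q| = 1$ is the classical Cartan relation $[L_X, i_Y] = i_{[X, Y]}$; from there I would induct on $|P|$ and $|Q|$, using that $i_{P \wedge R}$ factors through $i_P \circ i_R$ up to sign and that $[\![-, -]\!]$ is a graded biderivation of $\wedge$. The sign combinatorics is delicate but routine. A more hands-on alternative, avoiding the general machinery, is to evaluate both sides of $[d_\pi, i_\pi] = 0$ directly on a decomposable form $\omega = a_0 \du a_1 \wedge \ldots \wedge \du a_k$ via the explicit coordinate formula for $d_\pi$ displayed just before \eqref{boundary operator d_pi}: after cancellations, the surviving terms are double Poisson brackets that reorganise into cyclic sums $\{a_i, \{a_j, a_l\}\} + \{a_j, \{a_l, a_i\}\} + \{a_l, \{a_i, a_j\}\}$, which vanish by the Jacobi identity \eqref{Jacobi identity}.
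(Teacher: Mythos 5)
Your proof follows essentially the same route as the paper: the first identity by direct expansion using $\du^2 = 0$, and the second by recognising $[d_\pi, i_\pi]$ as $\pm[L_\pi, i_\pi] = \pm\, i_{[\![\pi,\pi]\!]}$ and invoking the master equation $[\![\pi,\pi]\!] = 0$. You are in fact more careful than the paper, which states $[L_\pi, i_\pi] = i_{[\pi,\pi]}$ without comment, whereas you correctly flag that this Koszul-type identity is the real content and sketch how to establish it.
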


\begin{proof}
To prove the first equation, we use the definition of $d_\pi$ (see \eqref{boundary operator d_pi}) and that $\du \circ \du = 0$. Then
    \begin{align*}
       (i_\pi \circ \du - \du \circ i_\pi )\circ \du + \du \circ (i_\pi \circ \du - \du \circ i_\pi ) = - \du \circ i_\pi \circ \du + \du \circ i_\pi \circ \du = 0
    \end{align*}
Hence $ [d_\pi , \du ] = 0$. Similarly for the second equation
    \begin{align*}
        [d_\pi , i_\pi ] = [L_\pi, i_\pi] = i_{[\pi, \pi]} = 0 \ .
    \end{align*}
\end{proof}

\begin{remark}
When $\A = C^\infty(M)$, where $M$ is a smooth Poisson manifold, we will see all the above, and more general, identities in the later section as well. 
\end{remark}

\begin{example}[$0$th Poisson homology]
For the kernel of $ d^0_\pi \colon \Omega^0 (\A) \cong \A \to 0 $ we obvously have $\ker d^0_\pi \cong \A$. For the image of $ d^1_\pi \colon \Omega^1 (\A) \to \Omega^0 (\A) \cong \A $ we have $\im d^1_\pi = \{ \A, \A \}$. This is because $\Omega^1 (\A)$ consists of element $a_0 da_1$, where $a_0, a_1 \in \A$ are arbitrary, and 
    \begin{align*}
        d^1_\pi (a_0 da_1) = \{a_0, a_1 \} \in \{ \A, \A\} \ .
    \end{align*}
Hence 
    \begin{align*}
        HP_0 (\A) \cong A / \{ \A, \A \}
    \end{align*}
There is no simple interpretation for higher homology groups, $k > 0$. 
\end{example}

\section{Polynomial Poisson Algebras}

Let $\A = \Cbb[x_1, \ldots, x_n] $ be the polynomial algebra over complex numbers. If $x_i$ is a generator of $\A$, then $\del_i : = \frac{\del}{\del x_i} \in \mathcal{D} (\B)$. The vector operator $\nabla : = (\del_1 , \ldots, \del_n) $ is called \emph{gradient} One can define the \emph{Jacobian matrix} of $n$ elements $f_1, \ldots, f_n \in \A$ as
    \begin{align*}
        \Jac(f_1, \ldots, f_n) : = \left(\frac{\del f_i}{\del x_j} \right) .
    \end{align*}
The determinant $\det ( \Jac(f_1, \ldots, f_n) )$ is called \emph{Jacobian}. It is clear the the $i$-th row of $ \Jac(f_1, \ldots, f_n)$ is $\nabla (f_i)$.

\subsection{Nambu-Jacobi-Poisson algebras}.

Now we fix $f_1, \ldots, f_{n-2} \in \A$ and define the following bilinear operation $\{ - , - \} \colon \A \times \A \to \A$, which yields a Poisson algebra structure on the polynomial algebra $\A$, called \emph{Nambu-Poisson-Jacobi structure}.
\begin{definition}
The \emph{Nambu-Jacobi-Poisson bracket} of $F,G \in \A$ is 
    \begin{align}\label{Nambu-Jacobi-Poisson bracket}
        \{ F,G \} : = \det \Jac(F,G, f_1, \ldots, f_{n-2}) \in \A \ .
    \end{align}
\end{definition}

When $\A = \Cbb[x_1,x_2,x_3]$, there is only one $f$ determining the Nambu-Jacobi-Poisson bracket, and we will denote the bracket by $\{-,-\}_f$. We proceed with the following elementary lemma.    
\begin{lemma}\label{lemma for Leibniz}
For $1 \leq i \leq n$ and $f_1, \ldots, f_{i-1}, f_{i+1}, \ldots, f_n \in \B$, the operation $D \colon \B \to \B$ given by $D(g) : = \Jac (f_1, \ldots, f_{i-1}, g, f_{i+1}, \ldots, f_n) $ is a derivation of $\B$.
\end{lemma}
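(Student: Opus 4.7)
The plan is to reduce the claim to the fact that a $\B$-linear combination of partial derivatives is a derivation, via cofactor expansion of the determinant along the $i$-th row.

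First I would observe that the matrix $\Jac(f_1,\ldots,f_{i-1},g,f_{i+1},\ldots,f_n)$ is obtained from the ``punctured'' data $f_1,\ldots,f_{i-1},f_{i+1},\ldots,f_n$ by inserting the gradient $\nabla g = (\del_1 g,\ldots,\del_n g)$ as the $i$-th row. The Laplace expansion along this $i$-th row gives
\begin{align*}
D(g) \;=\; \det \Jac(f_1,\ldots,f_{i-1},g,f_{i+1},\ldots,f_n) \;=\; \sum_{j=1}^{n} (-1)^{i+j}\, M_{ij}\, \del_j g,
\end{align*}
where $M_{ij} \in \B$ is the $(i,j)$-minor, i.e. the determinant of the $(n-1)\times(n-1)$ matrix built only out of $\del_k f_l$ for $l \neq i$. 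Crucially, the minors $M_{ij}$ do not involve $g$ at all; they are fixed elements of $\B$ depending only on $f_1,\ldots,f_{i-1},f_{i+1},\ldots,f_n$.

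Setting $c_j := (-1)^{i+j} M_{ij} \in \B$, this exhibits $D$ as the operator $D = \sum_{j=1}^{n} c_j\, \del_j$. Next I would recall two standard facts about derivations on a commutative algebra: each $\del_j$ is a $\Cbb$-derivation of $\B$, and for any $c \in \B$ and any derivation $\del \in \D(\B)$, the operator $c\cdot \del$ is again a derivation, since in the commutative setting
\begin{align*}
(c\,\del)(gh) \;=\; c\bigl(g\,\del h + h\,\del g\bigr) \;=\; g\,(c\,\del)(h) + h\,(c\,\del)(g).
\end{align*}
Finally, because $\D(\B)$ is closed under $\Cbb$-linear combinations, the sum $D = \sum_j c_j \del_j$ lies in $\D(\B)$, which is exactly the claim.

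The proof is essentially a one-line observation once the cofactor expansion is written down; the only ``obstacle'' is to make sure one expands along the correct row (the one housing $\nabla g$) so that the coefficients $M_{ij}$ are manifestly independent of $g$. No Jacobi-type identity or Schouten-Nijenhuis machinery is needed at this step — this lemma is the purely algebraic skeleton on which the Nambu–Jacobi–Poisson bracket \eqref{Nambu-Jacobi-Poisson bracket} will later be shown to satisfy the Leibniz rule in each argument.
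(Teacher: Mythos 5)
Your proof is correct. Note that the paper itself offers no proof of this lemma at all (it is introduced as an ``elementary lemma'' and used immediately in the proof of Theorem \ref{Nambu bracket is a Poisson bracket}), so there is no argument to compare yours against; your write-up simply supplies the missing details. The cofactor expansion along the row containing $\nabla g$ is the natural way to do it: the coefficients $c_j = (-1)^{i+j}M_{ij}$ visibly do not depend on $g$, so $D=\sum_j c_j\del_j$ is a $\B$-linear combination of the derivations $\del_j$ and hence a derivation, and the same formula also gives additivity of $D$ for free. An equally short alternative, worth mentioning, is to skip the expansion and use only the $\B$-multilinearity of the determinant in its rows: since $\nabla(gh)=g\,\nabla h+h\,\nabla g$ componentwise, one gets $D(gh)=\det(\ldots,g\,\nabla h+h\,\nabla g,\ldots)=g\,D(h)+h\,D(g)$ directly. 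Both routes are purely algebraic and, as you say, require no Jacobi-type identity; the Fundamental identity only enters later for the Jacobi identity of the bracket.
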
    

\begin{theorem}\label{Nambu bracket is a Poisson bracket}
The bracket \eqref{Nambu-Jacobi-Poisson bracket} is a Poisson bracket on $\A$. 
\end{theorem}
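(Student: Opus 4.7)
I would check the four defining properties in turn: $\Kbb$-bilinearity, skew-symmetry, the Leibniz rule, and the Jacobi identity.

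Bilinearity is immediate from multilinearity of the determinant in its rows combined with $\Kbb$-linearity of $\nabla$. Swapping the first two rows of $\Jac(F,G,f_1,\ldots,f_{n-2})$ corresponds to swapping $F$ and $G$ and flips the sign of the determinant, so the bracket is skew-symmetric. For the Leibniz rule, the map $F \mapsto \det\Jac(F,G,f_1,\ldots,f_{n-2})$ is precisely the derivation $D$ furnished by Lemma~\ref{lemma for Leibniz} (with $i=1$ and $g=F$); hence $\{-,G\}$ is a derivation of $\A$, and by skew-symmetry so is $\{F,-\}$. Therefore $\{-,-\}$ is a skew-symmetric biderivation on $\A$.

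The real content of the theorem is the Jacobi identity, which I would prove in two steps. First, a purely formal calculation shows that for \emph{any} skew-symmetric biderivation on a commutative algebra, the Jacobiator
\[
    J(F,G,H) := \{\{F,G\},H\} + \{\{G,H\},F\} + \{\{H,F\},G\}
\]
is itself a triderivation: expanding $J(F_1F_2,G,H)$ via Leibniz produces, in addition to the expected $F_1 J(F_2,G,H) + J(F_1,G,H)F_2$, four cross-terms of the form (bracket)$\cdot$(bracket) which cancel in pairs using the skew-symmetries $\{G,H\} = -\{H,G\}$, etc. Since $\A = \Cbb[x_1,\ldots,x_n]$ is generated by $x_1,\ldots,x_n$ as a commutative algebra, a triderivation that vanishes on generators vanishes identically, so it suffices to prove $J(x_i,x_j,x_k)=0$ for all triples.

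Second, I would verify the identity on generators via the volume-form repackaging. With $\Omega = dx_1 \wedge \cdots \wedge dx_n$, the very definition of the bracket gives
\[
    \{F,G\}\,\Omega \;=\; dF \wedge dG \wedge df_1 \wedge \cdots \wedge df_{n-2},
\]
equivalently, the bracket is induced by the bivector
\[
    \pi \;=\; \iota_{df_1 \wedge \cdots \wedge df_{n-2}}\bigl(\partial_1 \wedge \cdots \wedge \partial_n\bigr).
\]
The Jacobi identity is equivalent to $[\![\pi,\pi]\!] = 0$ in the Schouten--Nijenhuis algebra. Because $\partial_1 \wedge \cdots \wedge \partial_n$ has constant coefficients and each $df_l$ is closed ($d(df_l)=0$), a short Schouten computation shows that all surviving terms pair up as $\partial_a\partial_b f_l - \partial_b\partial_a f_l$ and vanish by the equality of mixed partials.

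\emph{Main obstacle.} The substantive point is this last cancellation. It is forced algebraically by the symmetry of the Hessian, but organising the bookkeeping is nontrivial: the Schouten/volume-form route is concise but requires comfort with the calculus of multivectors, while a direct expansion of $J(x_i,x_j,x_k)$ into cofactors of the matrix $(\partial_l f_m)$ is combinatorially the same statement written in coordinates. Either presentation ultimately reduces the Jacobi identity to $d^2 f_l = 0$.
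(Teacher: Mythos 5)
Your treatment of skew-symmetry (row swap in the determinant) and the Leibniz rule (via Lemma~\ref{lemma for Leibniz}) coincides with the paper's. Where you genuinely diverge is the Jacobi identity: the paper disposes of it in one line by invoking the Fundamental identity \eqref{Fundamental identity forthe Nambu bracket} for the $n$-ary Nambu bracket $\{f_1,\ldots,f_n\} = \det\Jac(f_1,\ldots,f_n)$ — an identity the paper states but does not itself prove — whereas you argue from scratch. Your two-step reduction is sound: the fact that the Jacobiator of any skew-symmetric biderivation on a commutative algebra is a triderivation is correct (the four cross-terms do cancel in pairs exactly as you say), and since $\Cbb[x_1,\ldots,x_n]$ is generated by the $x_i$ this legitimately reduces everything to $J(x_i,x_j,x_k)=0$. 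The volume-form repackaging $\{F,G\}\,\Omega = dF\wedge dG\wedge df_1\wedge\cdots\wedge df_{n-2}$, i.e.\ $\pi = \iota_{df_1\wedge\cdots\wedge df_{n-2}}(\partial_1\wedge\cdots\wedge\partial_n)$, is the standard intrinsic formulation, and $[\![\pi,\pi]\!]=0$ does ultimately rest on $d(df_l)=0$; for $n=3$ the cancellation is exactly the pairwise Hessian cancellation you describe, while for general $n$ the bookkeeping also involves Pl\"ucker-type identities among the minors of $(\partial_a f_l)$, so "a short Schouten computation" is the one place where your sketch, like the paper's, defers real work. The trade-off: the paper's route is shorter but rests on an unproved black box (the Fundamental identity), while yours is self-contained in principle and isolates precisely where the analytic input (equality of mixed partials) enters; your triderivation reduction is also reusable for any bracket defined on generators of a polynomial algebra.
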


\begin{proof}
To prove this theorem for any $n$, we observe that skew symmetry is evident from the skew-symmetry of $\det$, the Leibniz rule follows from the lemma \ref{lemma for Leibniz}, so the only non-trivial statement is, as usual, the Jacobi identity. But this follows from the Fundamental identity for the Nambu bracket $\{ f_1, \ldots, f_n\} : = \Jac (f_1, \ldots, f_n)$ (see \eqref{Fundamental identity forthe Nambu bracket} below).
\end{proof}

An interesting property of the algebraic bracket structure on the polynomial algebra is given by the following theorem. 

\begin{proposition}
If $f_1, \ldots, f_{n-2}$ are algebraically dependent over $\Cbb$, then $\{-,-\} = 0$.
\end{proposition}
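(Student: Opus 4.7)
The plan is to reduce algebraic dependence of the $f_i$ to linear dependence of their gradients $\nabla f_i$ over the function field $\Cbb(x_1,\ldots,x_n)$, which forces the bottom $n-2$ rows of the Jacobian matrix $\Jac(F,G,f_1,\ldots,f_{n-2})$ to be linearly dependent and hence the determinant to vanish.

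Concretely, I would proceed as follows. Algebraic dependence supplies a nonzero polynomial $P \in \Cbb[y_1,\ldots,y_{n-2}]$ with $P(f_1,\ldots,f_{n-2}) = 0$ in $\A$. Applying $\del / \del x_j$ to this identity and using the chain rule gives, for every $j$,
    \begin{align*}
        \sum_{i=1}^{n-2} Q_i \frac{\del f_i}{\del x_j} = 0, \qquad Q_i := \left(\frac{\del P}{\del y_i}\right)(f_1,\ldots,f_{n-2}) \in \A \ .
    \end{align*}
In vector form this is $\sum_i Q_i \n(f_i) = 0$, a linear relation among the rows $\n(f_1),\ldots,\n(f_{n-2})$ of the Jacobian with coefficients $Q_i$ in $\A$.

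The crucial step is to show the $Q_i$ are not all zero, so that the relation is nontrivial. For this I would choose $P$ to be an irreducible factor of a minimal-degree annihilator of the tuple $(f_1,\ldots,f_{n-2})$. Since $\operatorname{char}\Kbb = 0$ and $P$ is nonconstant, at least one partial derivative $\del P / \del y_i$ is a nonzero polynomial of strictly smaller total degree than $P$. By minimality (no nonzero polynomial of smaller degree annihilates the tuple), the corresponding $Q_i = (\del P / \del y_i)(f_1,\ldots,f_{n-2})$ cannot vanish in $\A$. Thus the relation $\sum_i Q_i \n(f_i) = 0$ is nontrivial over the field of fractions $\Cbb(x_1,\ldots,x_n)$, so $\n(f_1),\ldots,\n(f_{n-2})$ are linearly dependent over that field.

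Finally, for any $F,G \in \A$ the matrix $\Jac(F,G,f_1,\ldots,f_{n-2})$ has as its last $n-2$ rows precisely these linearly dependent vectors, so its determinant vanishes as a rational function, hence as a polynomial. Therefore $\{F,G\} = 0$ for all $F,G \in \A$.

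The main obstacle is justifying that some $Q_i$ is nonzero in $\A$; this is where characteristic zero is essential (in positive characteristic one could have $P(y) = y_1^p$ with $\del P / \del y_1 = 0$) and where the minimal-degree or irreducibility argument is needed. Once that is secured, the rest is a straightforward determinantal argument.
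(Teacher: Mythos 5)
Your argument is correct and complete. The paper itself gives no proof here (it is ``left as an easy exercise for the reader''), so there is nothing to compare against; what you have written is the standard Jacobian-criterion argument, and the one genuinely delicate point --- producing a relation $\sum_i Q_i \nabla f_i = 0$ with not all $Q_i$ zero in $\A$ --- is handled properly by taking a minimal-degree annihilator and using $\operatorname{char}\Cbb = 0$ to get a nonzero partial derivative of strictly smaller degree. The concluding step is also sound: linear dependence of $n-2$ of the $n$ rows of $\Jac(F,G,f_1,\ldots,f_{n-2})$ over the fraction field $\Cbb(x_1,\ldots,x_n)$ kills the determinant there, hence in the polynomial ring.
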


\begin{proof}
The proof is left as an easy exercise for the reader. 
\end{proof}

\begin{remark}
If we consider $F,G$ as rational functions, the result of bracket \eqref{Nambu-Jacobi-Poisson bracket} is still a polynomial. Consider a quotient by (convenient) ideal $\B : = \Cbb[x_1, \ldots, x_n] / <p_1, \ldots, p_k>$ where $ p_i \in \A$. Then the theorem \ref{Nambu bracket is a Poisson bracket} still ohlds, i.e. the bracket \eqref{Nambu-Jacobi-Poisson bracket} yields a Poisson algebra structure on $\B$. This is valid in an even more general setup of power series rings \cite{Odesskii-Rubtsov}.
\end{remark}

The Nambu-Jacobi-Poisson bracket is a special case of a $(n-m)$-ary operation
    \begin{align*}
        \{ F_1,\ldots, F_{n-m} \} : = \lambda \det \Jac(F_1,\ldots, F_{n-m}, f_1, \ldots, f_m) \in \A \ ,
    \end{align*}
where $\lambda, F_i \in \A$ for all $i$. The above multibracket $\{ -,\ldots , - \} \colon \A^{\otimes (n-m)} \to \A$ satisfies for every permutation $\sigma$ the antisymmetry condition
    \begin{align*}
        \{ F_1,\ldots, F_{n-m} \} = (-1)^\sigma \{ F_{\sigma(1)},\ldots, F_{\sigma(n-m)} \ .
    \end{align*}
It also satisfies the Leibniz rule in every argument
    \begin{align*}
         \{ hF_1,\ldots, F_{n-m} \} = F_1\{ h,\ldots, F_{n-m} \} + h\{ F_1,\ldots, F_{n-m} \} \ ,
    \end{align*}
and the \emph{Fundamental identity}
    \begin{align}\label{Fundamental identity forthe Nambu bracket}
         \{ F_1, \ldots, F_{n-m-1},\{G_1, \ldots , G_{n-m} \} \} & = \\ 
         = \sum_k \{ G_1, \ldots , G_{k-1},  \{ F_1, \ldots, F_{n-m-1}, G_k \}  , G_{k+1}, \ldots,  G_{n-m} \} \  , 
    \end{align}
which is a generalization of the Jacobi identity (the Jacobi identity and Nambu-Jacobi-Poisson bracket is restored in the case $n-m = 2$). For more details about the Nambu structures and their generalizations, see \cite{Odesskii-Rubtsov}.

Consider now a $(n-2)\times 2$ matrix over $\A$
    \begin{align*}
    M = \begin{pmatrix}
            a_{1,1} & \ldots  & a_{1,n} \\
            \vdots & & \vdots \\
            a_{n-2, 1} & \ldots & 0 a_{n-2,n}
        \end{pmatrix} .
    \end{align*}
Suppose $i \neq j$ and denote by $\hat{M}_{ij}$ the matrix given by deleting the $i$-th and $j$-th column (thus the result being $(n-2) \times (n-2)$ matrix. If we choose
    \begin{align*}
    M = \begin{pmatrix}
            \frac{\del f_1}{\del x_1} & \ldots  & \frac{\del f_1}{\del x_n} \\
            \vdots & & \vdots \\
            \frac{\del f_{n-2}}{\del x_1} & \ldots & \frac{\del f_{n-2}}{\del x_n}
        \end{pmatrix} ,
    \end{align*}
then for the generators of $\A$ we have
    \begin{align*}
        \{ x_i, x_j \} = (-1)^{i+j-1} \det \hat{M}_{ij} 
    \end{align*}
and we define $ \{ x_i, x_i \} = 0$.    


\begin{example}
Let $n = 3, f = \frac{1}{3}({x_1}^3 + {x_2}^3 + {x_3}^3) + \tau x_1 x_2 x_3$, where $\tau \in \Cbb$. Then $\{ x_i, x_j \} = \Jac(x_i, x_j, f) = z x_i x_j + {x_k}^2$, where $(i,j,k)$ is a permutation of $\{ 1,2,3 \}$. 
\end{example}

\begin{example}[Sklyanin elliptic Poisson brackets]
Let $n = 4$ and consider $f_1 = q_1(x_1, x_2, x_3, x_4), f_2 = q_2(x_1, x_2, x_3, x_4)$, where $q_1,q_2$ are \emph{quadratic} polynomials. Choosing $q_1 = {x_1}^2 + {x_2}^2 + {x_3}^2 + {x_4}^2$ and $q_2 = \alpha {x_2}^2 + \beta {x_3}^2 + \gamma {x_4}^2$ such that $\alpha \beta \gamma + \alpha + \beta + \gamma \neq 0$, we obtain the original Sklyanin-Poisson structure \cite{Sklyanin1982, Odesskii-Rubtsov, Smith94}.
\end{example}


\subsection{Poisson-Calabi-Yau algebra}

This is a Jacobian algebra $\A = \Cbb[x_1,x_2,x_3]$ with $f = -{x_1}^2 x_3$ \cite{Berger-Picherau-2014}. The Nambu-Jacobi-Poisson bracket \eqref{Nambu-Jacobi-Poisson bracket} is
    \begin{align*}
        \{x_1, x_2 \}_f = - {x_1}^2,  &&   \{x_2, x_3 \}_f = - 2 x_1 x_3, && \{x_1, x_3 \}_f = 0 \ . 
    \end{align*}
It is interesting that the algebra $\Omega^1 (\A) = \A <dx_1, dx_2, dx_3>$ (think of a free algebra over $A$) is also a Poisson algebra with 
    \begin{align*}
        \{dx_1, dx_2 \}_\Omega & = d \{x_1, x_2 \}_f = -2x_1 dx_1 \ , \\
        \{dx_2, dx_3 \}_\Omega & = d \{x_2, x_3 \}_f = -2x_3 dx_1 - 2x_1 dx_3 \ , \\
        \{dx_1, dx_3 \}_\Omega & = d \{x_1, x_3 \}_f = 0 \ .
    \end{align*}
There is a corresponding sequence \cite{Lichnerowicz77} 
    \begin{equation*}
        \begin{tikzcd}
        0  \arrow{r}{} & \Omega^0 (\A) \arrow{r}{\du} \arrow{d}{\cong} & \Omega^1 (\A) \arrow{r}{\du} \arrow{d}{\cong} & \Omega^2 (\A) \arrow{r}{\du} \arrow{d}{\cong} & \Omega^3 (\A)\arrow{r}{} \arrow{d}{\cong} &  0  \\
             0  & \Lambda^0 ((\mathcal{D}(\A)) \arrow[swap]{l}{} & \Lambda^1((\mathcal{D}(\A)) \arrow[swap]{l}{d_\pi}  & \Lambda^2 ((\mathcal{D}(\A)) \arrow[swap]{l}{d_\pi}  & \Lambda^3 ((\mathcal{D}(\A)) \arrow[swap]{l}{d_\pi} &  0 \arrow[swap]{l}{}
        \end{tikzcd}
    \end{equation*}
where $\du$ is the universal derivative (see \eqref{universal derivative}) and $d_\pi$ is given by \eqref{boundary operator d_pi}. The isomorphism are coming from the duality \eqref{duality between derivations and Kahler differentials}.

\subsubsection{Low-dimensional cohomology of the PCY algebra}.

Considering the above sequence, we firstly notice that $\Lambda^0 ((\mathcal{D}(\A)) \cong \A$ and $ \Lambda^1((\mathcal{D}(\A)) \cong  \mathcal{D}(\A)$, so that the first map amounts to mapping $\A \to \mathcal{D}(\A)$, and $\delta_\pi (a) = [\![\pi , a ]\!] = \del_a \in \Ham (\A)$ is a Hamiltonian derivation. Thus $\delta_\pi (a) = 0$ iff $a$ is a Casimir element (see def. \eqref{Casimir elements}) of $\A$ and we have
    \begin{align*}
        HP^0 (\A) \cong \Cas(\A) = <{x_1}^2 x_3>_{\A} \ ,
    \end{align*}
where $\Cas(\A)$ are the Cassimir elements of $\A$. For the first cohomology, consider $\del \in \mathcal{D}(\A)$, we have $\delta_\pi (\del) = [\![\pi , \del ]\!] = - [\![ \del , \pi ]\!] = \mathcal{L}_{\del} \pi$. So we see that $\del \in \ker \delta_\pi $ iff $\mathcal{L}_{\del} \pi = 0$, meaning that $\pi$ is invariant with respect to $\del$. We have already met these operators in the section in which we computed the low-dimensional Poisson cohomology for more general algebras: the set of such operators is denoted $\operatorname{Can}(\A)$ and $\del$ is called 
Poisson canonical. The first cohomology is 
    \begin{align*}
        HP^1 (\A) = \operatorname{Can}(\A) / \Ham(\A) \ .
    \end{align*}

\subsection{Dual Poisson complex}

Consider the chain complex
    \begin{equation*}
        \begin{tikzcd}
           \Lambda^3 ((\D(\A)) \arrow{r}{d_\pi}  & \Lambda^2((\mathcal{D}(\A)) \arrow{r}{d_\pi} & \Lambda^1 ((\mathcal{D}(\A)) \arrow{r}{d_\pi}  & \Lambda^0 ((\mathcal{D}(\A))  \ .
        \end{tikzcd}
    \end{equation*}
This complex was introduced by Brylinski in \cite{Brylinski88}. It has highly non-trivial (Poisson) homology. For example, the lowest homology is  
    \begin{align*}
        HP_0 (\A) \cong \A /<\{a,b\}_f|\ a,b \in \A>_{\A} \ , 
    \end{align*}
since $\Lambda^0 ((\mathcal{D}(\A)) = \A$ and the image of $\delta_\pi \colon \Lambda^1 ((\mathcal{D}(\A)) \to \Lambda^0 ((\mathcal{D}(\A))$ is given by $\delta_\pi (a db) = \{a,b \}_f$. Following the definition of the Nambu-Jacobi-Poisson bracket on $\A$ we get
    \begin{align*}
        \{a,b \}_f = 
           \det \begin{pmatrix}
                \del_1 a & \del_2 a & \del_3 a \\
                \del_1 b & \del_2 b & \del_3 b \\
                \del_1 f & \del_2 f & \del_3 f 
                \end{pmatrix} 
    \end{align*}
Writing $\nabla a = (\del_1 a,  \del_2 a ,\del_3 a)$, we can express $\{a,b \}_f $ as 
    \begin{align*}
       \{a,b \}_f = \nabla f \cdot (\nabla a \times \nabla b) \  ,
    \end{align*}
where $\cdot$ is the dot product and $\times$ is the vector product. More details on Poisson (co)homology of the Dual Poisson complex can be found in \cite{Poisson-Structures}.

\begin{exercise}
Describe all differentials in the complex above in terms of vector alanysis operations: $\nabla, \operatorname{curl}, \times, (-,-), \operatorname{div}$.
\end{exercise}

\textbf{Generalized SPDUNR Poisson algebra}. The following is the generalized Sklyanin-Painlevé-Dubrovin-Ugaglia-Nelson-Regge Poisson algebra: $\A_f= (\Cbb[x_1,x_1,x_3], \{-,-\}_f)$, where $\{-,-\}_f$ is the Jacobian Poisson-Nambu structure on $\Cbb^3$ (see \eqref{Nambu-Jacobi-Poisson bracket}), and $F,G \in \Cbb[x_1,x_1,x_3]$. Let $M_f$ be the zero locus of 
    \begin{align*}
        f = x_1 x_1 x_3 + \sum_{i = 1}^3 a_i {x_i}^3 - \sum_{i = 1}^3 \epsilon_i {x_i}^2 + \sum_{i = 1}^3 c_i x_i + \omega \ ,
    \end{align*}
where  $\epsilon_i \in \{ 0,1\}$ and $a_i, c_i, \omega \in \Cbb$. The bracket is given by 
    \begin{align*}
        \{ f, x_i\}_f : = 0, \text{for } i = 1,2,3 \ ,
    \end{align*}
and
    \begin{align*}
        \{ x_1, x_2\}_f : = x_1 x_2 + 3a_3 {x_3}^2 - 2 \epsilon_3 x_3 + b_3 \ ,
    \end{align*}
the result being cyclic in $(1,2,3)$ for other $x_i,x_j$. For a generic set of constraints, the bracket is nowhere vanishing on $M_f$.


\section{Graded Poisson algebras}

Let $\A$ be a Poisson algebra. We shall suppose that $\A$ is an associative \emph{gradded algebra}, that is, $\A$ contains a set of vector subspaces $(\A^k)_{k \in \Nbb_0}$ s.t. $A = \bigoplus_{k \in \Nbb_0} \A^k$ and $\A^k \cdot \A^l \subset \A^{k+l}$ for all $k,l \in \Nbb_0$. Moreover, we assume $\A_0 : = \Kbb$. 

\begin{definition} 
Let $d \in \Nbb_0$ be arbitrary. $\A$ is called a \emph{graded Poisson algebra of degree $d$} if $\forall a \in \A^k, b \in \A^l: \{ a,b\} \in \A^{k+l-d}$ (for $n < 0$ define $A^n = 0$). 
\end{definition}

The graded Poisson algebras can be constructed from non-commutative, associative, unital algebras $\U$, which are \emph{filtered}: 
    \[ 
        \U = \bigcup_{k \in \Nbb_0} \U_k, \text{ where }1 \in \Kbb = \U_0 \subset \U_1 \subset \dots \subset \U_k \subset \dots.
    \]
To every such algebra $\U$, we can define the associated graded algebra $\S = \gr (\U)$, $\S = \bigoplus_{k \in \Nbb_0} \S_k$, where $\S_k : = \U_k / U_{k-1}$, $k \geq 1$, and $\S_0 : = \U_0 = \Kbb$. We denote by $\gr_k \colon \U_k \to \S_k $ the canonical projections. Then $\U$ is a graded Poisson algebra of degree $d \geq 1$ if $uv - vu \in \U_{k+l-d}$, for all $u \in \U_k, v \in \U_l$ (define $\U_k = 0$ for $k < 0$). 

\begin{proposition}
The associated graded algebra $\S$ given by the graded algebra $\U$ of degree $d$ is a graded Poisson algebra of degree $d$. 
\end{proposition}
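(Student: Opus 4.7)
The plan is to construct explicitly the three pieces of data that make $\S$ into a graded Poisson algebra of degree $d$, namely a graded commutative product, a graded Lie bracket of degree $-d$, and the Leibniz compatibility, and then verify that each is well-defined and inherits its axioms from the corresponding structure on $\U$.

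First I would define the product $\cdot\colon \S_k \times \S_l \to \S_{k+l}$ by choosing lifts: for $a = \gr_k(u)$ and $b = \gr_l(v)$ with $u \in \U_k$, $v \in \U_l$, set $a \cdot b := \gr_{k+l}(uv)$. Well-definedness is immediate because replacing $u$ by $u + x$ with $x \in \U_{k-1}$ changes $uv$ by $xv \in \U_{k+l-1}$, which vanishes in $\S_{k+l}$; similarly on the right. Associativity and unitality are inherited from $\U$. Graded commutativity (in fact, honest commutativity since $\S_0 = \Kbb$) follows from the hypothesis: $uv - vu \in \U_{k+l-d} \subseteq \U_{k+l-1}$ since $d \geq 1$, so $\gr_{k+l}(uv) = \gr_{k+l}(vu)$.

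Next I would define the bracket $\{-,-\}\colon \S_k \times \S_l \to \S_{k+l-d}$ by
    \begin{align*}
        \{a, b\} := \gr_{k+l-d}(uv - vu),
    \end{align*}
with $a = \gr_k(u)$, $b = \gr_l(v)$. The hypothesis $uv - vu \in \U_{k+l-d}$ is exactly what makes the right-hand side lie in the right graded piece. For well-definedness, replacing $u$ by $u + x$ with $x \in \U_{k-1}$ changes the commutator by $xv - vx \in \U_{(k-1)+l-d} = \U_{k+l-d-1}$, which is zero in $\S_{k+l-d}$; symmetrically on the other slot. Antisymmetry is obvious. The Jacobi identity for $\{-,-\}$ follows by applying $\gr_{k+l+m-2d}$ to the Jacobi identity for the commutator in $\U$ (note both the total filtration degree and the placement in the correct graded piece work out because each nested commutator drops filtration by $d$).

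Finally, for the Leibniz rule, I would use the standard identity $[uv, w] = u[v,w] + [u,w]v$ in the associative algebra $\U$. For $a = \gr_k(u)$, $b = \gr_l(v)$, $c = \gr_m(w)$, both $u[v,w]$ and $[u,w]v$ lie in $\U_{k+l+m-d}$, and passing to $\gr_{k+l+m-d}$ turns them into $a\cdot\{b,c\}$ and $\{a,c\}\cdot b$ respectively by the definition of the product and the bracket on $\S$; this gives $\{a\cdot b, c\} = a\cdot\{b,c\} + \{a,c\}\cdot b$, and one extends by bilinearity to inhomogeneous elements. The main bookkeeping obstacle, which is mild, is keeping track of which filtration piece each lift lives in so that the graded projections $\gr_{k+l-d}$ are applied consistently; once this is organized, every axiom descends mechanically from the corresponding property of $\U$'s commutator.
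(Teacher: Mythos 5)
Your construction is exactly the paper's: define the product by $a\cdot b = \gr_{k+l}(uv)$ and the bracket by $\{a,b\} = \gr_{k+l-d}(uv-vu)$ on lifts, check well-definedness by perturbing representatives within the lower filtration step, and let the axioms descend from $\U$. You are in fact somewhat more thorough than the paper, which leaves the Jacobi identity and the Leibniz rule to the reader and states the commutativity argument less carefully; your verifications of those points are correct.
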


\begin{proof}
Since the canonical projection $\gr_k \colon \U_k \to S_k $ is a surjection, to each $a \in \S$ exists $k \in \Nbb_0$ so that $a \in \S_k$, thus there exists $u \in \U_k$ s.t. $a = \gr_k(u)$. Let $b \in \S_l$ and $v \in \U_l$ s.t. $b = gr_l (v)$. The product in $\S$ is defined by 
    \[ 
        a b = gr_{k+l} (uv) \in \S_{k+l} \ .
    \]
The product is well-defined, because if we pick different representatives, say $\tilde{a} = a + x, x \in \U_{k-1}$ and $\tilde{b} = b + y, y \in \U_{l-1}$, then 
    \[ 
        \tilde{a}\tilde{b} =  (a + x) ( b + y) = ab + \underbrace{ay + xb + xy}_{\in \U_{k+l-1}} = ab  \ .
    \]
The unit in $\S$ is the same as in $\U$, and the associativity of $\S$ is also inherited from the associativity of $\U$. So we see that $\S$ is a graded algebra. 
The Lie bracket is
    \[ 
        \{ a,b\} = \{ \gr_k (u), \gr_l(v) \}: = gr_{k+l-d}(uv - vu) 
    \]
and is of degree $d$. By a similar argument as for the product, $\{-,-\}$ is well-defined on $\S$. Since $ab \in \S_{k+l}$ and $ba \in \S_{l+k} = \S_{k+l}$, there exist $u,u^\prime \in \U_{k+l}$ s.t. $ab = \gr_{k+l} (u), ba = \gr_{k+l} (u^\prime) $ and thus $ab - ba = \gr_{k+l} (u - u^\prime) = 0$. This shows that the product in $\S$ is commutative, hence $\S$ is Poisson graded of degree $d$. 
\end{proof} 

\textbf{Module structures on $\A$.} Let $\A$ be a commutative, associative $\Kbb$-algebra with the unit $1$. $\End (\A) = \Hom(\A, \A)$ has two $\A$-module structures, \emph{left} and \emph{right}: $\forall a,x \in \A$
    \begin{align}\label{A-module stuctures on End(A)}
     l_a \varphi(x) : = a\varphi(x), & & r_a \varphi(x) : = \varphi(ax) \ . 
    \end{align}

\begin{lemma}\label{derivation property of delta}
For arbitrary $a \in A$, denote 
    \begin{align}\label{the delta derivation}
        \delta_a : = r_a - l_a .
    \end{align}
Then $\delta_a$ satisfies the Leibniz rule
    \[
        \delta_a ( \varphi \circ \psi) = \delta_a \varphi \circ \psi +  \varphi \circ \delta_a \psi \ , 
    \]
that is, $\delta_a \in \mathcal{D}( \End(\A) ) \subset \Hom (\End (\A), \End (\A) ) $.
\end{lemma}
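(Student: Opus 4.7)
The statement is a pointwise identity on $\End(\A)$ viewed as a $\Kbb$-algebra under composition, so the plan is to unpack the definitions of $l_a$, $r_a$, and $\delta_a$ and simply evaluate both sides of the claimed Leibniz rule on an arbitrary element $x \in \A$. Fix $\varphi, \psi \in \End(\A)$ and $a,x \in \A$. From \eqref{A-module stuctures on End(A)} and \eqref{the delta derivation} one has
\[
(\delta_a \varphi)(y) = \varphi(ay) - a\varphi(y) \qquad \text{for all } y \in \A,
\]
which is the only fact needed.

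The computation I would carry out has three lines. First, the left-hand side:
\[
\delta_a(\varphi\circ\psi)(x) = (\varphi\circ\psi)(ax) - a(\varphi\circ\psi)(x) = \varphi(\psi(ax)) - a\varphi(\psi(x)).
\]
Second, the two summands of the right-hand side:
\[
(\delta_a\varphi \circ \psi)(x) = (\delta_a\varphi)(\psi(x)) = \varphi(a\psi(x)) - a\varphi(\psi(x)),
\]
\[
(\varphi \circ \delta_a\psi)(x) = \varphi((\delta_a\psi)(x)) = \varphi(\psi(ax)) - \varphi(a\psi(x)),
\]
where the second equality on the last line uses $\Kbb$-linearity of $\varphi$. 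Adding these two lines, the terms $\pm\varphi(a\psi(x))$ cancel and what remains is exactly $\varphi(\psi(ax)) - a\varphi(\psi(x))$, matching the left-hand side. Since $x$ was arbitrary, the endomorphisms agree, which is the Leibniz rule. The final assertion $\delta_a \in \mathcal{D}(\End(\A))$ is then just the definition of a derivation on the associative algebra $(\End(\A),\circ)$.

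There is no real obstacle here: the only subtlety is that the two $\A$-module structures in \eqref{A-module stuctures on End(A)} act in different places — $l_a$ multiplies by $a$ \emph{after} applying $\varphi$, while $r_a$ inserts $a$ \emph{before} applying $\varphi$ — so one must keep track of where each $a$ sits. The key observation that makes the proof work is that the cross term $\varphi(a\psi(x))$ appears with opposite signs in the two summands of the right-hand side, which is precisely what forces the difference $r_a - l_a$ (rather than $r_a$ or $l_a$ individually) to be a derivation. Neither $r_a$ nor $l_a$ alone satisfies the Leibniz rule for composition, so this cancellation is the whole content of the lemma.
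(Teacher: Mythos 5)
Your proof is correct and is essentially identical to the paper's: both unpack $\delta_a = r_a - l_a$ pointwise, evaluate $\delta_a(\varphi\circ\psi)$ and the two summands of the right-hand side on an arbitrary element, and observe that the cross terms $\pm\varphi(a\psi(x))$ cancel (using $\Kbb$-linearity of $\varphi$). No differences worth noting.
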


\begin{proof}
For arbitrary $\varphi, \psi \in \End (\A)$ and $a, u \in \A$, using the definition \eqref{the delta derivation}, we have
    \[
        \delta_a (\varphi \circ \psi) (u) = \delta_a (\varphi (\psi (u) ) ) = \varphi (\psi (au) ) - a\varphi (\psi (u) ) \ .
     \]
On the other hand 
    \begin{align*}
        ( \delta_a \varphi \circ \psi +  \varphi \circ \delta_a \psi ) (u) & = \delta_a \varphi ( \psi (u) ) + \varphi ( \delta_a \psi (u) ) \\
            & =  \varphi ( a \psi (u) ) - a  \varphi ( \psi (u) ) + \varphi ( \psi (au) - a\psi (u) ) \\
            & = \varphi ( \psi (au) ) - a  \varphi ( \psi (u) )  \ . \qedhere 
    \end{align*}
\end{proof}

\begin{lemma}
$[\delta_a, \delta_b] = 0$ for all $a,b \in \A$, where $\delta_a$ is given by \eqref{the delta derivation}.
\end{lemma}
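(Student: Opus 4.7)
The plan is to expand both operators using their definitions and exploit the commutativity of $\A$. Writing $\delta_a = r_a - l_a$, bilinearity of the commutator gives
\begin{align*}
[\delta_a, \delta_b] = [r_a - l_a, r_b - l_b] = [r_a, r_b] - [r_a, l_b] - [l_a, r_b] + [l_a, l_b],
\end{align*}
so it suffices to show that each of the four bracketed pieces vanishes separately.

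First I would verify the three elementary commutation relations among the left/right multiplications, applying each operator to an arbitrary $\varphi \in \End(\A)$ evaluated at an arbitrary $x \in \A$. For left multiplications, $l_a l_b \varphi(x) = ab\,\varphi(x) = ba\,\varphi(x) = l_b l_a \varphi(x)$, using commutativity of $\A$; hence $[l_a, l_b] = 0$. For right multiplications, $r_a r_b \varphi(x) = \varphi(bax) = \varphi(abx) = r_b r_a \varphi(x)$, again by commutativity (together with associativity inside $\varphi$); hence $[r_a, r_b] = 0$. For the mixed pair, $l_a r_b \varphi(x) = a\varphi(bx) = r_b(l_a\varphi)(x) = r_b l_a \varphi(x)$, directly from the definitions, so $[l_a, r_b] = 0$.

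Substituting these three vanishings into the expansion above yields $[\delta_a, \delta_b] = 0$. No induction or deeper structure is needed.

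The only subtlety, and arguably the main (but very mild) obstacle, is keeping track of whether commutativity of $\A$ is used through the left multiplications (for $[l_a,l_b]=0$) or through the argument of $\varphi$ (for $[r_a,r_b]=0$), and noting that the mixed relation $[l_a,r_b]=0$ is automatic and does not even require commutativity. One could alternatively give a direct unified calculation of $\delta_a\delta_b\varphi(x) - \delta_b\delta_a\varphi(x)$, where the four-term expansion
\begin{align*}
\delta_a\delta_b\varphi(x) = \varphi(abx) - b\varphi(ax) - a\varphi(bx) + ab\,\varphi(x)
\end{align*}
is manifestly symmetric in $a$ and $b$ once one uses $ab = ba$, but the decomposition through $l_a, r_a$ is more structural and fits cleanly with the preceding Lemma.
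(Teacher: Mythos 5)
Your proof is correct. The paper itself gives no argument here --- it states only that ``the proof is a straightforward computation and we leave it to the reader as an exercise'' --- so there is nothing to compare against; your decomposition $[\delta_a,\delta_b]=[r_a,r_b]-[r_a,l_b]-[l_a,r_b]+[l_a,l_b]$, with the first and last brackets vanishing by commutativity of $\A$ and the mixed ones vanishing identically, is exactly the kind of routine verification the authors had in mind, and your remark that the mixed relation $[l_a,r_b]=0$ needs no commutativity is a nice touch.
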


\begin{proof}
The proof is a straightforward computation and we leave it to the reader as an exercise.
\end{proof}

\subsection{Algebra of differential operators}

\begin{definition}\label{algebra of differential operators}
For all $k \in \Nbb_0$, we define
    \[ 
        \Diff_k (\A) : = \bigcap_{\underset{0 \leq i \leq k}{a_i \in \A}} \ker (\delta_{a_0} \circ \dots \delta_{a_i}) 
    \]
and 
    \[ 
        \Diff_* (\A) : = \bigcup_{k \geq 0}  \Diff_k (\A)  \ ,
    \]
which is an abelian group under the addition $+$. Then $ \Diff_* (\A)$ inherits the two $\A$-module structures \eqref{A-module stuctures on End(A)} of $\End (\A)$. We will write $\Diff_*^{(+)} (\A)$ to emphasize the bimodule structure. The elements of $\Diff_k (\A)$ will be called diffferential operators of order $\leq k$ on a commutative algebra $\A$. 
\end{definition}

\begin{observation}
Directly from the above definition we have that for all $k \in \Nbb_0: \Diff_{k-1} (\A) \subset \Diff_k (\A)$. 
\end{observation}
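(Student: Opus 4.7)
The plan is to unpack the definition of $\Diff_k(\A)$ and observe that the asserted inclusion follows immediately from $\Kbb$-linearity of each $\delta_a$. Let $\varphi \in \Diff_{k-1}(\A)$; by definition this means that $(\delta_{b_0} \circ \cdots \circ \delta_{b_{k-1}})(\varphi) = 0$ for every choice of $b_0, \ldots, b_{k-1} \in \A$. I want to show that $(\delta_{a_0} \circ \delta_{a_1} \circ \cdots \circ \delta_{a_k})(\varphi) = 0$ for every $a_0, \ldots, a_k \in \A$, which is exactly the condition $\varphi \in \Diff_k(\A)$.

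The key step is to regroup the composition of length $k+1$ as $\delta_{a_0} \circ \bigl(\delta_{a_1} \circ \cdots \circ \delta_{a_k}\bigr)$. Taking $b_i := a_{i+1}$ for $0 \leq i \leq k-1$, the hypothesis gives $(\delta_{a_1} \circ \cdots \circ \delta_{a_k})(\varphi) = 0$ as an element of $\End(\A)$. Since $\delta_{a_0} = r_{a_0} - l_{a_0}$ is a difference of two $\Kbb$-linear maps on $\End(\A)$, it is itself $\Kbb$-linear and hence sends $0$ to $0$. Therefore $(\delta_{a_0} \circ \delta_{a_1} \circ \cdots \circ \delta_{a_k})(\varphi) = \delta_{a_0}(0) = 0$, as required.

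There is no real obstacle: the content of the observation is only that adding one more factor $\delta_{a_0}$ on the left of a composition which already annihilates $\varphi$ cannot produce a nonzero result. The substantive point, worth emphasising in the write-up, is merely that the filtration $\Diff_0(\A) \subset \Diff_1(\A) \subset \cdots$ is increasing, which in particular makes the union $\Diff_*(\A) = \bigcup_{k \geq 0}\Diff_k(\A)$ a directed union and so a well-defined filtered subalgebra of $\End(\A)$.
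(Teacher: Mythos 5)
Your argument is correct and is exactly the direct unpacking that the paper has in mind: the paper states this as an observation following ``directly from the definition'' and gives no further proof, and your regrouping $\delta_{a_0}\circ(\delta_{a_1}\circ\cdots\circ\delta_{a_k})$ together with $\Kbb$-linearity of $\delta_{a_0}$ is the intended justification. The only cosmetic point is the edge case $k=0$, where $\Diff_{-1}(\A)$ is not defined in the paper and the inclusion is vacuous or requires the convention $\Diff_{-1}(\A)=0$.
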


\begin{remark}
One can generalize the above definition to the case $\varphi \colon P \to Q$, where $P$ and $Q$ are projective, finitely generated $\A$-modules ($0 \leq k$) and $\varphi$ is an $\A$-module homomorphism. Then
    \[ 
        \Diff_k (P,Q) : = \{ \varphi \colon P \to Q \ |\  \delta_{a_0} \circ \dots \circ \delta_{a_k} (\varphi) = 0 \text{ for all } a_0, \dots , a_k \in \A \} \  .
    \]
In this notation, $\Diff_k(\A) = \Diff_k(\A, \A)$. 
\end{remark}

\begin{example}[Lie algebra structure on $\Diff_*^{(+)} (\A)$]\label{Lie algebra structure on Diff}
Consider $\End_{\Kbb} (\A)$, equipped with a Lie algebra structure given by the commutator 
    \[
        [\varphi, \psi] = \varphi \circ \psi - \psi \circ \varphi  \ . 
    \]
Using the derivation property from lemma \ref{derivation property of delta}, we have 
    \begin{align}\label{intermidiate computation 2}
        \delta_a [\varphi, \psi] = [ \delta_a \varphi, \psi] + [\varphi, \delta_a \psi]
    \end{align}
for all $a \in \A$ and $\varphi \in \End_{\Kbb} (\A)$. Hence $\delta_a$ acts as a derivation on the commutator. Suppose that $\varphi, \psi \in \Diff_1^{(+)} (\A)$, then from \eqref{intermidiate computation 2} we get 
    \begin{align*}
        \delta_b \circ \delta_a [\varphi, \psi] = [ \delta_a \varphi, \delta_b \psi] + [ \delta_b \varphi, \delta_a \psi] \ ,
    \end{align*}
which does not have to vanish. Hence $[\varphi, \psi] \notin \Diff_1^{(+)} (\A)$, meaning that $\Diff_1^{(+)} (\A)$ is not a Lie algebra. Applying $\delta$ once again yields 
    \begin{align*}
        \delta_c \circ \delta_b \circ \delta_a [\varphi, \psi] = 0 \ . 
    \end{align*}
Thus $[\varphi, \psi] \in \Diff_2^{(+)} (\A)$. Proceeding in a similar fashion one can show that the composition of $\varphi \in \Diff_k^{(+)} (\A)$ and $\psi \in \Diff_l^{(+)} (\A)$ is of order $\leq k+l$, that is $\varphi \circ \psi \in \Diff_{k+l}^{(+)} (\A)$ and the filtered bimodule $\Diff_*^{(+)} (\A)$ is a Lie algebra. 
\end{example}

\begin{remark}
The $\A$-bimodule $\Diff_*^{(+)} (P,P)$ is also filtered, since 
    \[ 
        \delta (\varphi \circ \psi) = \delta (\varphi) \circ \psi + \varphi \circ \delta \psi
    \]
and so the composition of a differential operator of degree $\leq k$ with a differential operator of degree $\leq l$ results in a differential operator of degree $\leq k + l$
    \[ 
        \Diff_k^{(+)} (P,P) \otimes_{\Kbb} \Diff_l^{(+)} (P,P) \to \Diff_{k+l}^{(+)} (P,P)  \ .
    \]
\end{remark}

\begin{example}
Let $\varphi \in \End_{\Kbb} (\A)$, and recall that $\A$ is associative, commutative and unital algebra over $\Kbb$.
    \begin{itemize}
        \item  $\Diff_0 (\A) = \underset{a \in \A}{\cap} \ker \delta_a$. Using the definition \eqref{the delta derivation}, we have 
            \begin{align*}
                \delta_a \varphi (u) = \varphi (au) - a \varphi (u)  \ . 
            \end{align*}
        So $\varphi \in \Diff_0 (\A)$ iff $ \varphi (au) = a \varphi (u)$ for all $a,u \in \A$. Choosing $u = 1$ and writing $a = a1$ gives $ \varphi (a1) = a \varphi (1)$, meaning that $\varphi$ is completely determined by its value on the unit element of $\A$, which gives 
                \[ 
                    \Diff_0 (\A) =  \End_{\A}(\A) = \A  \ .
                \]
        Note that the above case is rather special. If we consider the case of $\A$-modules $P,Q$, then we get 
                \[
                    \Diff_0 (P,Q) = \End_{\A}(P,Q) \ .
                \]   
        \item $\Diff_1 (\A) = \underset{a,b \in \A}{\cap} \ker \delta_b \circ \delta_a $, where 
            \begin{align}\label{intermediate computation 3}
                \delta_b \circ \delta_a \varphi (u) = \varphi (bau) - b \varphi (au) - a\varphi (bu) + ba\varphi (u) \  .
            \end{align}
        \item $\Diff_2 (\A) = \underset{a,b,c \in \A}{\cap} \ker \delta_c \circ \delta_b \circ \delta_a  $, where 
            \begin{align*}
                \delta_c \circ \delta_b \circ \delta_a \varphi (u) = \varphi (bau) - b \varphi (au) - a\varphi (bu) + ba\varphi (u)  \ .
            \end{align*}
    \end{itemize}    
\end{example}

The goal of the following example is to demonstrate that the above given algebraic definition of differential operators on a commutative algebra $\A$ fits with the standard picture of differential operators on functions. 
\begin{example}
Let $\A = C^{\infty} (\Rbb)$ be the algebra of smooth functions of one real variable, the algebra binary operation given by multiplication of functions. Take $\varphi = \del_x : = \frac{\del}{\del x}$. Then
    \begin{align}
        \delta_a \del_x (u) = 0 \iff \del_x (au) = a \del_x u  \ . 
    \end{align}
which is not the case for all $a,u \in C^{\infty}(\Rbb)$. As expected (since $\del_x \notin \A$), the above implies $\del_x \notin \Diff_0 (\A)$. On the other hand, using \eqref{intermediate computation 3}, 
    \begin{align*}
        \delta_b \circ \delta_a \varphi (u) = \del_x (bau) - b \del_x (au) - a \del_x (bu) + ba \del_x u = 0
    \end{align*}
is satisified for all $a,b,u \in C^{\infty}(\Rbb)$, thus $\del_x \in \Diff_1 (C^{\infty}(\Rbb))$. Similarly, if we take $f \del_x $, where $f \in C^{\infty}(\Rbb)$, then
    \begin{align*}
        \delta_b \circ \delta_a f \del_x (u) = f\del_x (bau) - b f\del_x (au) - a f\del_x (bu) + ba f\del_x u = 0 \ ,
    \end{align*}
and so $f\del_x \in \Diff_1 (C^{\infty}(\Rbb))$. Finally, consider $\del^2_x : = \del_x \circ \del_x$. Then
    \begin{align*}
        \delta_b \circ \delta_a \del^2_x = \delta_b ( \delta_a \del_x \circ \del_x + \del_x \circ  \delta_a \del_x) = \delta_a  \del_x  \circ \delta_b \del_x + \delta_b  \del_x  \circ \delta_a \del_x \ ,   
    \end{align*}
which is not vanishing for all $a,b \in C^{\infty}(\Rbb)$. Thus $\del^2_x \notin \Diff_1 (C^{\infty}(\Rbb))$. We can easily check that 
    \[ 
        \delta_c \delta_b \circ \delta_a f \del^2_x = 0 \ ,
    \]
so $f\del^2_x \in \Diff_2 (C^{\infty}(\Rbb))$. One can show that $f\del^i_x \in \Diff_i (C^{\infty}(\Rbb))$, for all $i \in \Nbb_0$ and $f \in C^{\infty}(\Rbb)$, where $\del^i_x : = \underbrace{\del_x \circ \ldots \circ \del_x}_{\text{$i$-times}}$. Since we have the sequence of inclusions 
    \[
        \Diff_0{\A} \xhookrightarrow{} \Diff_1 {\A} \xhookrightarrow{} \ldots \xhookrightarrow{} \Diff_k{\A} \xhookrightarrow{} \ldots \ . 
    \]
Altogether we get 
    \[ 
        D_k : = \sum_{i = 0}^k f_i \del^i_x \in \Diff_k (C^{\infty}(\Rbb)) \ . 
    \]
\end{example}
    
\begin{definition}\label{symbol algebra}
Consider the factor space
    \begin{align*}
        \Smbl_k (\A) : = \Diff_k^{(+)} (\A) / \Diff_{k-1}^{(+)} (\A) \ . 
    \end{align*}
The \emph{symbols algebra of $\A$} is 
    \begin{align*}
        \Smbl_* (\A): = \bigoplus_{k \in \Nbb_0} \Smbl_k (\A)  \ ,
    \end{align*}
with the graded algebra structure
    \begin{align*}
        \Smbl_k \cdot \Smbl_l \subset \Smbl_{k+l} & \{ \Smbl_k , \Smbl_l \} \subset \Smbl_{k+l-1} \ .
    \end{align*}
\end{definition}

\begin{remark}
Recall that to any filtered algebra, we can associate a graded commutative algebra. This is precisely the case of the symbol algebra $\Smbl_*(\A)$ with respect to the filtered algebra $\Diff_*^{(+)}(\A)$.  
\end{remark}

\begin{proposition}
The symbol algebra $\Smbl_*(\A)$ is a Poisson graded algebra of degree $1$.
\end{proposition}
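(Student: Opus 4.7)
The plan is to invoke the earlier proposition on associated graded algebras: if $\U$ is a filtered, associative, unital algebra with $uv - vu \in \U_{k+l-d}$ for all $u \in \U_k, v \in \U_l$, then $\gr(\U)$ is a Poisson graded algebra of degree $d$. Taking $\U = \Diff_*(\A)$ with composition as the product and targeting $d = 1$, the proposition follows immediately once I verify, for all $\varphi \in \Diff_k(\A)$ and $\psi \in \Diff_l(\A)$, the two filtration facts: (a) $\varphi \circ \psi \in \Diff_{k+l}(\A)$, and (b) $[\varphi, \psi] \in \Diff_{k+l-1}(\A)$.

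For (a), I would apply $\delta_{a_0} \circ \cdots \circ \delta_{a_{k+l}}$ to $\varphi \circ \psi$ and expand using the Leibniz rule from Lemma \ref{derivation property of delta}. The result is a sum over partitions $I \sqcup J = \{0, \ldots, k+l\}$ of composites $\delta_I(\varphi) \circ \delta_J(\psi)$ (where $\delta_I$ is the composition of the $\delta_{a_i}$ with $i \in I$). Since $|I|+|J| = k+l+1$, every term has either $|I| > k$ (killing $\delta_I(\varphi)$) or $|J| > l$ (killing $\delta_J(\psi)$), so the entire sum vanishes and $\varphi \circ \psi \in \Diff_{k+l}(\A)$.

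For (b), the same expansion applied to $\varphi \circ \psi$ and $\psi \circ \varphi$ after only $k+l$ derivations and then subtracted yields
\[
\delta_{a_0} \circ \cdots \circ \delta_{a_{k+l-1}}([\varphi, \psi]) = \sum_{\substack{I \sqcup J = \{0, \ldots, k+l-1\}\\|I|=k,\ |J|=l}} [\delta_I(\varphi),\, \delta_J(\psi)],
\]
since every partition with $|I| > k$ or $|J| > l$ kills one of the factors, leaving only those with $|I|=k,\ |J|=l$. In each surviving summand, $\delta_I(\varphi) \in \Diff_0(\A) \cong \A$ and $\delta_J(\psi) \in \Diff_0(\A) \cong \A$ act as left multiplication by elements of the commutative algebra $\A$, so their commutator vanishes. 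Hence $[\varphi, \psi] \in \Diff_{k+l-1}(\A)$.

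Combined, (a) and (b) say precisely that $\Diff_*(\A)$ is a filtered associative unital algebra whose commutator drops the filtration degree by one, and the earlier proposition upgrades this to a Poisson graded structure of degree $1$ on $\Smbl_*(\A) = \gr \Diff_*(\A)$; the product and bracket are $\gr_k(\varphi) \cdot \gr_l(\psi) := \gr_{k+l}(\varphi \circ \psi)$ and $\{\gr_k(\varphi), \gr_l(\psi)\} := \gr_{k+l-1}([\varphi, \psi])$, matching Definition \ref{symbol algebra}. The main obstacle is (b): the combinatorial bookkeeping in the expansion, together with the crucial observation that $\Diff_0(\A) = \A$ is commutative, which is exactly what allows the leading-order terms of $\varphi \circ \psi - \psi \circ \varphi$ to cancel and produces the drop in filtration degree by one.
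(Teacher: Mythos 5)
Your argument is correct, and it is essentially the proof the paper intends but omits: the proposition is stated without proof, and the reader is expected to combine the earlier proposition on associated graded algebras of filtered algebras with the filtration facts about $\Diff_*(\A)$ sketched in Example \ref{Lie algebra structure on Diff}. Your verifications (a) and (b) are exactly the required inputs, and both expansions are sound (you implicitly use $[\delta_a,\delta_b]=0$ to make the partition bookkeeping order-independent, which the paper provides as a separate lemma). One cosmetic mismatch: the associated-graded proposition is stated for filtrations with $\U_0=\Kbb$, whereas $\Diff_0(\A)=\A$; its proof never uses that hypothesis, so the citation is harmless, but it is worth flagging if you want the reduction to be literal.

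More importantly, your step (b) is not only correct but quietly corrects the paper. Example \ref{Lie algebra structure on Diff} asserts that for $\varphi,\psi\in\Diff_1^{(+)}(\A)$ the expression $\delta_b\circ\delta_a[\varphi,\psi]=[\delta_a\varphi,\delta_b\psi]+[\delta_b\varphi,\delta_a\psi]$ ``does not have to vanish'' and concludes $[\varphi,\psi]\notin\Diff_1^{(+)}(\A)$. This is wrong for precisely the reason you identify: $\delta_a\varphi$ and $\delta_b\psi$ lie in $\Diff_0(\A)=\A$ and act by multiplication by elements of the commutative algebra $\A$, so each commutator $[\delta_I(\varphi),\delta_J(\psi)]$ in the surviving leading terms vanishes and $[\varphi,\psi]\in\Diff_{k+l-1}(\A)$. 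If the commutator only landed in $\Diff_{k+l}(\A)$, as the example suggests, the symbol algebra would fail to be commutative and the induced bracket would have degree $0$ rather than $1$, so the proposition itself would be false. Your observation that the commutativity of $\Diff_0(\A)=\A$ is the crux of the degree drop is exactly right and is the one point a complete proof cannot do without.
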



\begin{exercise}
    \begin{enumerate}
        \item $\Diff_k^{(+)}(\A)$ is an $\A$-submodule in $\End(\A)$.
        \item $\Diff_*(\A)$ is a filtered subalgebra in $\End (\A)$.
        \item $\Diff_0(\A) = \A$ and $\Diff_1(\A) = \mathcal{D}(\A) \oplus \A$ .
        \item $\Smbl_0 (\A) = \A$ and $\Smbl_1 (\A) = \mathcal{D}(\A)$.
    \end{enumerate}
\end{exercise}


\section{Intermezzo - Tensor, symmetric and exterior algebras}

\subsection{Tensor algebra of a vector space}

Tensor algebra of a $\Kbb$ vector space, denoted by $T(V)$ is 
    \begin{align*}
        T(V) : = \bigoplus_{k = 0}^{\infty} T^k (V) \ , 
    \end{align*}
where 
    \begin{align*}
        T^k (V) : = \otimes^k V = V \otimes \ldots \otimes V \ .
    \end{align*}
The space $T^k (V) $ consists of $\Kbb$-multilinear mappings 
    \begin{align*}
        \tau \colon V^* \times \ldots \times V^*  \to \Kbb  \ , 
    \end{align*}
where $V^* $ is the vector space dual to $V$. by definition, $ T^0 (V) = \Kbb$. Also, $ T^1 (V) = V$. The algebra product in $T(V)$ is given by the canonical isomorphism defined by the tensor product
    \begin{align*}
        T^k (V) \otimes T^l (V) \to T^{k+l} (V) \ .
    \end{align*}
The tensor algebra satisfies the following universal property: for every $\Kbb$-algebra $\A$ and arbitrary linear map $\psi \colon V \to \A$, there is a uniquely given linear map $\tilde{\psi} \colon T(V) \to \A$ such that the following diagram commutes
    \begin{equation*}
        \begin{tikzcd}
        V \arrow{r}{\iota} \arrow[swap]{dr}{\psi} & T(V) \arrow[dashed]{d}{\exists! \tilde{\psi}}  \\
            & \A
        \end{tikzcd}
    \end{equation*}
That is $\tilde{\psi} \circ \iota = \psi$, where $\iota$ is the canonical embedding of $V$ into $T(V)$. 


\subsection{Symmetric algebra of a vector space}
Let $V$ be a $\Kbb$-vector space, $T(V)$ its tensor algebra. Consider 
    \begin{align}\label{ideal for symmetrization}
        \J : = <u \otimes v - v \otimes u \ |\ u,v \in V > \ ,
    \end{align}
which is a two-sided ideal in $T(V)$. Then the qoutient algebra $S(V) : = T(V) / \J$, called the \emph{symmetric algebra of $V$}, is an associative and commutative algebra, satisfying the following universal property. For any associative, commutative and unital algebra $\A$ and every linear mapping $\psi \colon V \to \A$, there is precisely one unital algebra homomorphism $\tilde{\psi}$ such that the following diagram commutes
    \begin{equation*}
        \begin{tikzcd}
        V \arrow{r}{\iota} \arrow[swap]{dr}{\psi} & S(V) \arrow[dashed]{d}{\exists! \tilde{\psi}}  \\
            & \End(V)
        \end{tikzcd}
    \end{equation*}
where $\epsilon$ is the canonical embedding of $V$ into $S(V)$, given by the composition of the canonical embedding $V \to T(V)$ and the canonical quotient projection $T(V) \to S(V)$. 

\begin{remark} Let us mention some useful properties about the above defined algebras. 
    \begin{itemize}
        \item $S(V)$ is a free, associative, commutative and unital algebra on $n = \dim V$ generators. The product is given as follows. To avoid confusion, we will denote the classes in $S(V)$ with bracket notation. Let $[s] \in S^k(V), [t] \in S^l(V)$. Then the product is 
            \[
                [s][t] : = [s \otimes t] \in S^{k+l}(V)  \ .
            \]
        This product is well defined. To check this, consider different representatives of the equivalence classes $[\tilde{s}] = [s], [\tilde{t}] = [t] $. This means there exist $j_s, j_t \in \J$ such that $s = \tilde{s} + j_s$ and $t = \tilde{t} + j_t$. Then 
            \begin{align*}
                [s \otimes t] = [(\tilde{s} + j_s ) \otimes  \tilde{t} + j_t)] =  [\tilde{s} \otimes  \tilde{t} + \tilde{s} \otimes j_t + j_s \otimes  \tilde{t} + j_s \otimes j_t ]  = [\tilde{s} \otimes  \tilde{t} ] \ ,
            \end{align*}
        where the last equality follows from the fact that $\tilde{s} \otimes j_t + j_s \otimes  \tilde{t} + j_s \otimes j_t  \in \J$, since $\J$ is a two-sided ideal in $T(V)$. 
        \item The ideal $\J$ is homogeneous, meaning that the factor algebra $S(V)$ inherits the grading of $T(V)$. Thus we have $S(V) = \oplus_{k = 0}^\infty S^k(V)$, where $S^k (V) : = T^k(V) / \J^k$, where $\J^k = \J \cap T^k(V)$. 
        \item Consider the canonical projection $\pr_k \colon T^k (V) \to S^k (V)$. We can restrict $\pr_k$ to the linear subspace of symmetric $k$-tensors 
            \begin{align*}
                \pr_k|_{\Sym_k(V)} \colon \Sym_k(V) \to S^k (V)  \  .
            \end{align*}
        Because we always assume $\operatorname{char}\Kbb = O$, the above map can be inverted, yielding a graded vector space isomorphism
            \[
                \Sym (V) = \oplus_{k = 0}^\infty \Sym_k(V) \cong \oplus_{k = 0}^\infty S^k(V) = S(V)
            \]
        Although the space of symmetric tensors and the symmetric algebra are isomorphic as a graded $\Kbb$-vector spaces, it does not make sense to speak about isomorphism of algebras, since $\Sym (V)$ does not posses algebra structure in the sense that the tensor product of two symmetric tensors does not have to be a symmetric tensor. In $\operatorname{char}\Kbb > 0$, we even lose the graded vector spaces isomorphism.
        \item $\Sym(V)$ is a linear subspace of $T(V)$.
        \item $S(V)$ is a not a subalgebra of $T(V)$. 
    \end{itemize}
\end{remark}

\textbf{Symmetrization}. Let $\s_k \colon T^k(V) \to \Sym^k(V)$ be the symmetrization map, given for arbitrary $t \in T^k(V)$ by 
    \begin{align}\label{symmetrization map}
        \s_k (t) : = \frac{1}{k!} \sum_{\sigma \in \mathfrak{S}_k} \sigma \cdot t \ ,
    \end{align}
where $\mathfrak{S}_k$ is the permutation group on $k$-elements and $\sigma \cdot t$ is the action of the permutation group on $k$-tensors, given on $t = t_1 \otimes \ldots \otimes t_k$ by $\sigma \cdot t : = t_{\sigma(1)} \otimes \ldots \otimes t_{\sigma(k)}$ (and we extend $\cdot$ on general $t \in S_k$ by $\Kbb$-linearity).

\begin{proposition}
The ideal \eqref{ideal for symmetrization} is a graded ideal $\J  = \oplus_{k = 0}^\infty \J_k$, where $\J_k : = J \cap T^k(V)$. Moreover, $\ker \s_k = \J_k $ and $T^k(V) = \J_k \oplus \Sym^k(V) \cong \J_k \oplus S^k(V)$
\end{proposition}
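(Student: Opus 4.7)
The plan is to proceed in four steps, corresponding to the four assertions: gradedness of $\J$, the identification $\ker \s_k = \J_k$, the direct sum decomposition of $T^k(V)$, and finally the isomorphism with $S^k(V)$.

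\textbf{Step 1 (gradedness).} The generators $u \otimes v - v \otimes u$ all lie in $T^2(V)$, which is homogeneous. An arbitrary element of $\J$ is a finite sum of terms of the form $a \otimes (u \otimes v - v \otimes u) \otimes b$ with $a \in T^p(V)$, $b \in T^q(V)$, each of which lies in $T^{p+q+2}(V)$. So any element of $\J$ decomposes uniquely according to the grading of $T(V)$, each homogeneous component lying in $\J \cap T^k(V) = \J_k$. This gives $\J = \bigoplus_{k \geq 0} \J_k$.

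\textbf{Step 2 ($\J_k \subseteq \ker \s_k$).} It suffices to evaluate $\s_k$ on a single generator $t = a \otimes (u \otimes v - v \otimes u) \otimes b$ of degree $k = p+q+2$. If $\tau \in \mathfrak{S}_k$ denotes the transposition of positions $p+1$ and $p+2$, then $t = (1 - \tau) \cdot (a \otimes u \otimes v \otimes b)$. Since $\sigma \mapsto \sigma\tau$ is a bijection of $\mathfrak{S}_k$, we get $\s_k(\tau \cdot x) = \s_k(x)$ for all $x$, so $\s_k(t) = 0$. Extending by linearity gives $\J_k \subseteq \ker \s_k$.

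\textbf{Step 3 ($\ker \s_k \subseteq \J_k$).} For any decomposable $t = v_1 \otimes \cdots \otimes v_k$ and any adjacent transposition $\tau_i$ swapping positions $i$ and $i+1$, the element $t - \tau_i \cdot t$ is literally one of the generators of $\J$ (tensored with the remaining factors), hence lies in $\J_k$. Any permutation $\sigma \in \mathfrak{S}_k$ is a product of adjacent transpositions, so a telescoping argument shows $t - \sigma \cdot t \in \J_k$ for every $\sigma$. Writing
\begin{align*}
t - \s_k(t) = \frac{1}{k!} \sum_{\sigma \in \mathfrak{S}_k} (t - \sigma \cdot t) \in \J_k,
\end{align*}
we see that every $t \in T^k(V)$ satisfies $t - \s_k(t) \in \J_k$. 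In particular, if $\s_k(t) = 0$ then $t \in \J_k$.

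\textbf{Step 4 (direct sum and quotient).} A direct check on a basis shows $\s_k^2 = \s_k$, so $\s_k$ is a projector with image $\Sym^k(V)$ and kernel $\J_k$. The standard splitting for a projector gives $T^k(V) = \ker \s_k \oplus \im \s_k = \J_k \oplus \Sym^k(V)$. Finally, composing the inclusion $\Sym^k(V) \hookrightarrow T^k(V)$ with the canonical projection $T^k(V) \to T^k(V)/\J_k = S^k(V)$ yields an isomorphism $\Sym^k(V) \cong S^k(V)$ (injective since $\Sym^k(V) \cap \J_k = 0$ by the direct sum, surjective since $T^k(V) = \J_k + \Sym^k(V)$).

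The only delicate point is Step 3: one must really check that every element of $\ker \s_k$ is a $\Kbb$-linear combination of the generators $u \otimes v - v \otimes u$ (sandwiched between tensors), which is exactly what the telescoping identity for permutations delivers. Everything else is a formal consequence of the fact that $\s_k$ is a projector onto $\Sym^k(V)$ once characteristic zero is in force.
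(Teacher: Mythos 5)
The paper states this proposition without giving any proof, so there is nothing of the authors' to compare against; your argument is the standard one and it is correct. All four steps check out: the generators of $\J$ are homogeneous, so $\J$ inherits the grading; the averaging identity $\s_k(\sigma \cdot x) = \s_k(x)$ kills the generators; the telescoping decomposition $t - \sigma \cdot t = \sum (\text{adjacent swaps})$ together with $t - \s_k(t) = \frac{1}{k!}\sum_{\sigma}(t - \sigma\cdot t)$ gives the reverse inclusion $\ker \s_k \subseteq \J_k$; and idempotence of $\s_k$ (which, as you note, needs $\operatorname{char}\Kbb = 0$ for the factor $\frac{1}{k!}$) yields the splitting and the isomorphism $\Sym^k(V) \cong T^k(V)/\J_k = S^k(V)$. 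The only point worth being slightly more careful about is that with the paper's convention $\sigma \cdot t := t_{\sigma(1)} \otimes \cdots \otimes t_{\sigma(k)}$ one has $\sigma\cdot(\tau\cdot t) = (\tau\sigma)\cdot t$ (a right action), so the bijection you invoke in Step 2 should be $\sigma \mapsto \tau\sigma$ rather than $\sigma \mapsto \sigma\tau$; this is cosmetic and does not affect the argument.
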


Let $\{e_1, \ldots, e_n \} $ be a basis of $V$. The map \eqref{symmetrization map} gives an isomorphism 
    \[ 
        S(V) \cong \Kbb[x_1, \ldots, x_n]
    \]
such that 
    \[
        \s_k (e_{i_1} \otimes \ldots \otimes e_{i_k}) = x_{i_1} \dots x_{i_k}  \ .
    \] 
The special case of PBW theorem says that the monoms $\{e_1^{i_1}, \ldots, e_n^{i_1} \}$ form a~basis of $S(V)$ as a $\Kbb$-vector space and $S(V) \cong \Kbb[e_1, \ldots, e_n]$.


\begin{example}
Let $\g$ be a finite dimensional Lie algebra over $\Kbb = \Rbb$ or $\Kbb = \Cbb$. The symmetric algebra $S(\g^*)$ is isomorphic to $\Kbb[\g^*]$, the polynomial algebra with $\dim \g^*$ variables. The bracket on $\g^*$ makes $\Kbb[\g^*]$ a (commutive) Poisson algebra. 

A simpler version of this construciton is $\A = S(V^*) = \Kbb[V^*]$, where $V$ is a~finite dimensional vector space equipped with a skew-symmetric bilinear form $B \colon V\times V \to \Kbb$, which provides a Poisson brackets on $\Kbb[V^*]$. For instance, let $\dim V = 2$. Consider $X,Y \in V$ linearly independent, so that $V = <X,Y>$. Then $\Kbb[V^*] \cong \Kbb[X,Y]$ and $\{X,Y\} = B(X,Y) : = 1$. In this case, the pair $(V,B)$ is a symplectic plane. 
\end{example}


\subsection{Exterior algebra of a vector space}

Let $V$ be a $n$-dimensional $\Kbb$-vector space. The \emph{exterior algebra} of $V$, denoted $\Lambda (V)$, is defined as a graded subspace in the tensor algebra $T(V)$, formed by completely antisymmetric tensors. Recall that $t \in T^k (V)$ is \emph{completely antisymmetric} (or \emph{alternating}, or \emph{completely skew-symmetric}) if
    \begin{align*}
        \sigma \cdot t : = t_{\sigma(1)} \otimes \ldots \otimes t_{\sigma(k)} = \sgn (\sigma) t_1 \otimes \ldots \otimes t_k
    \end{align*}
for all $k$-permutations $\sigma$ (and we extend $\cdot$ on general $t \in S_k$ by $\Kbb$-linearity). Then $\Lambda^k (V) \subset T^k (V)$ is formed by all such $k$-tensors. Note that $\Lambda^1(V) = V$ and for $k = 0$ we define $\Lambda^0(V) = \Kbb$. The whole algebra is given by the $\Kbb$-vector space
    \[
        \Lambda (V) : = \oplus_{k \geq 0} \Lambda^k (V)  \ ,
    \]
with the product $\wedge$, called \emph{wedge product}, 
    \[
        \Lambda^k (V) \otimes \Lambda^l (V) \to \Lambda^{k+l} (V)
    \]
given by 
   \begin{align}\label{exterior product}
        \omega_1 \wedge \omega_2 : = \frac{(k + l)!}{k! l!} \Alt (\omega_1 \otimes \omega_2) \ ,
   \end{align}
where 
    \begin{align*}
        \Alt \colon T^k(V) \to \Lambda^k(V) \ ,
    \end{align*}
is a projection on the subspace of alternating tensors, called \emph{alternating map}, defined for arbitrary $t \in T^k(V)$ as 
    \begin{align*}
        \Alt (t) : = \frac{1}{k!} \sum_{\sigma \in \mathfrak{S}_k} \sgn (\sigma) \sigma \cdot t \ . 
    \end{align*}
\begin{example}
Consider $t \in T^2(V)$, given by $t = v \otimes w - w \otimes v$. Then $\Alt (t) = \frac{1}{2} (v \otimes w - w \otimes v - w \otimes v + v \otimes w = t$, i.e. $t$ is already an element of $\Lambda^2 (V)$. 
\end{example}
The wedge product satisfies
    \begin{align*}
         \omega_1 \wedge \omega_2 & =  (-1)^{pq} \omega_2 \wedge \omega_1, \\
          \omega_1 \wedge (\omega_2 \wedge \omega_3)  & = (\omega_1 \wedge \omega_2) \wedge \omega_3
    \end{align*}
meaning that $\Lambda (V)$ is a graded, associative $\Kbb$-algebra with the unit $1$. 

Similarly as in the case of symmetric algebra, we can define $\Lambda(V)$ as a~quotient of the tensor algebra. Consider a two-sided ideal $\J \subset T(V)$, given by 
    \[ 
        \J : = < t_1 \otimes t_2 + t_2 \otimes t_1 \ | \ t_1 , t_2 \in T(V) >
    \]
Then we define $ \Lambda(V) : = T(V) / \J $. Moreover, $\Lambda(V)$ satisfies the following universal property. For every associative, unital algebra $\A$ and any $\Kbb$-linear map $\varphi \colon V \to \A$, such that $j(v)^2 = 0$ for all $v \in V$, there is a unique algebra homorphism $\tilde{\varphi} \colon \Lambda(V) \to \A$, such that the following diagram commutes
    \begin{equation*}
        \begin{tikzcd}
        V \arrow{r}{\iota} \arrow[swap]{dr}{\varphi} & \Lambda(V) \arrow[dashed]{d}{\exists! \tilde{\varphi}}  \\
        & \A
        \end{tikzcd}
    \end{equation*}  
Using the universal property, one can show that the two above construction of $\Lambda(V)$, either as a subspace of alternating tensors or the quotient algebra, are isomorphic (in a unique way).


\subsection{Poisson structure on a symmetric algebra $S(\g)$}

\begin{theorem}
$S(\g)$ is a graded Poisson algebra of degree $1$ or of degree $2$. 
\end{theorem}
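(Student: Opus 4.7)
The plan is to produce the two graded Poisson structures by realising $S(\g)$ as the associated graded algebra of the universal enveloping algebra $U(\g)$ with two different filtrations, and then invoking the earlier proposition (``the associated graded algebra of a filtered algebra of degree $d$ is a graded Poisson algebra of degree $d$'').

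\textbf{Degree 1 case.} Equip $U(\g)$ with its Poincaré--Birkhoff--Witt filtration $U_k := \Kbb + \g + \g^{\cdot 2} + \cdots + \g^{\cdot k}$, where $\g^{\cdot j}$ denotes the $\Kbb$-span of $j$-fold products of elements of $\g$ inside $U(\g)$. The essential point is the filtration estimate $[U_k, U_l] \subset U_{k+l-1}$. The base case $k=l=1$ is immediate: for $X, Y \in \g$ we have $XY - YX = [X,Y]_\g \in \g = U_1$. The inductive step follows from bilinearity of the commutator and the derivation identities $[ab,c] = a[b,c] + [a,c]b$ and $[a,bc] = [a,b]c + b[a,c]$, which successively reduce brackets of monomials of total length $k+l$ to brackets of shorter monomials. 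By the PBW theorem, $\gr U(\g) \cong S(\g)$ canonically as graded commutative algebras. Applying the earlier proposition yields on $S(\g)$ a graded Poisson bracket of degree $1$, which on generators $X,Y \in \g = S^1(\g)$ recovers the Kirillov--Kostant--Souriau bracket $\{X,Y\} = [X,Y]_\g$, extended to $S(\g)$ by the Leibniz rule.

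\textbf{Degree 2 case.} Refilter $U(\g)$ by doubling: set $U'_{2k} := U'_{2k+1} := U_k$. This is still an exhaustive ascending filtration compatible with the product, and the estimate from the degree $1$ case rewrites as
\begin{align*}
    [U'_{2k}, U'_{2l}] = [U_k, U_l] \subset U_{k+l-1} = U'_{2(k+l)-2} = U'_{2k+2l-2} .
\end{align*}
Hence $(U(\g), U'_*)$ is a filtered algebra of degree $2$ in the sense of the preceding section. Its associated graded algebra is $S(\g)$ with the regrading $S^k(\g)$ placed in degree $2k$, and the earlier proposition endows it with a graded Poisson bracket of degree $2$. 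Concretely this is the same Kirillov--Kostant--Souriau bracket viewed with respect to the doubled grading.

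\textbf{Main obstacle.} The only nontrivial verification is the filtration estimate $[U_k, U_l] \subset U_{k+l-1}$ for the standard PBW filtration; once this is in hand, both statements follow formally from the proposition on filtered-to-graded passage together with the PBW isomorphism $\gr U(\g) \cong S(\g)$. The estimate itself is a routine double induction on $k$ and $l$ using the derivation identities for the commutator, so the real work is bookkeeping rather than a genuine difficulty.
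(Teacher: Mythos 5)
Your degree~$1$ argument is correct but takes a genuinely different route from the paper. The paper constructs the bracket directly on $S(\g)$: it extends $\ad_g$ to a derivation of $S(\g)$ by the Leibniz rule on decomposables and thereby extends the Lie bracket of $\g$ to a degree~$1$ Poisson bracket. You instead realise $S(\g)$ as $\gr \U(\g)$ for the PBW filtration, verify $[\U_k,\U_l]\subset \U_{k+l-1}$, and invoke the earlier filtered-to-graded proposition. Both work; your route gets associativity, the Leibniz rule and the Jacobi identity for free from $\U(\g)$, at the cost of a forward reference (the PBW theorem and the isomorphism $\gr\U(\g)\cong S(\g)$ appear only in the \emph{following} section of these notes), whereas the paper's direct construction is self-contained at this point and only needs the symmetric algebra itself.

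The degree~$2$ part has a genuine gap. Your ``doubled filtration'' $\U'_{2k}=\U'_{2k+1}=\U_k$ does satisfy the formal definition, but the associated graded object is the \emph{same} Kirillov--Kostant--Souriau bracket with every generator artificially declared to have degree $2$; no new Poisson structure on $S(\g)$ is produced, and with respect to the standard grading of $S(\g)$ (which is the grading the theorem refers to, with $S^1(\g)=\g$ in degree $1$) your bracket is still of degree $1$. What the theorem actually asserts, as the paper's proof and the example immediately following it make clear, is the existence of a different bracket: one starts from a skew-symmetric bilinear form $\Omega\colon\g\times\g\to\Kbb$, sets $\{X,Y\}:=\Omega(X,Y)\in\Kbb=S^0(\g)$ for $X,Y\in S^1(\g)$, and extends by the Leibniz rule; then $\{S^k(\g),S^l(\g)\}\subset S^{k+l-2}(\g)$, which is degree $2$ for the standard grading. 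This is the constant (symplectic, Weyl-algebra-type) bracket, e.g.\ $\{e_i,e_{i+m}\}=1$ in a symplectic basis, and it is not obtainable by regrading the Lie--Poisson bracket. If you want to stay in the spirit of your filtered approach, the correct filtered object for the degree~$2$ case is not $\U(\g)$ with a relabelled filtration but the Weyl algebra $\A_{2m}$ (or more generally the quotient of $T(V)$ by $u\otimes v-v\otimes u-\Omega(u,v)$) with its standard filtration, for which $[\,\cdot\,,\cdot\,]$ drops the filtration degree by $2$.
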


\begin{proof} (Degree 1.) Let $\g $ be a $\Kbb$-Lie algebra. We will show that the Lie bracket extends in a unique way to a Poisson bracket of degree $1$ on $S(\g)$. Let $g \in \g$ be arbitrary. Consider the endomorphism\footnote{Notice that using the action of $\g$ on itself given by multiplication from the left, we can identify elements in $\g$ as endomorphisms of $\g$, $h \mapsto l_h$. Then we have $\delta_g (l_h) (u) = h(gu) - gh(u) = (ad_g h) u$, thus $\delta_g|_{\g} \equiv \ad_g$.} $\ad_g \equiv \delta_g \in \End_{\Kbb} (\g)$, given by
    \begin{align*}
        \ad_g (h) = [g,h] \ , h \in \g \ .
    \end{align*}
Then $\ad_g$ can be extended in a unique way to the whole $S(\g)$ to a derivation on $S(\g)$ as follows. Consider a \emph{decomposable} $t \in S(\g)$, i.e. $t$ can be written as $t = t_1 \cdot \ldots \cdot t_k$, where $\cdot$ denotes the (symmetric) product in $S(\g)$ and $t_1, \ldots, t_k \in \g$. Then 
    \begin{align*}
        \ad_g(t) : = \sum_{i = 1}^k t_1 \cdot \ldots \cdot t_{i-1}  \cdot  ad_x(t_i) \cdot t_{i+1} \cdot \ldots \cdot t_k \ .
    \end{align*} 
A general element of $S(\g)$ is a $\Kbb$-linear combination of decomposables, so the above definition of the bracket can be extended linearly to the whole $S(\g)$. This extension is also denoted $\ad_g$. For arbitrary $t,u \in  S(\g)$ we have
    \begin{align*}
        \ad_g (t\cdot u) = \ad_g (t) \cdot u + t \cdot \ad_g (u) \ . 
    \end{align*}

(Degree 2.) It is sufficient to define a skew-symmetric bilinear form 
    \begin{align*}
        \Omega \colon \g \times \g \to \Kbb \ .
    \end{align*}
Then there is a unique extension of $\Omega (g,-) \colon \g \to \Kbb$ to a derivation 
    \[ 
        \del \colon S(\g) \to \mathcal{D}(S(\g)) \ ,
    \]
which can be extended to a degree $2$ derivation on $S(\g)$.
\end{proof}

\begin{example}
Let $\Omega$ be non-degenerate, skew-symmetric $2$-form on $\g = \Rbb^{2m}$, that is, we consider $\g$ to be abelian (the Lie bracket is zero). Then there is a~basis, called \emph{symplectic basis},  $e_1, \ldots, e_n$ ($n= 2m$), such that the matrix of $\Omega$ is written in this basis as 
    \[
        \begin{pmatrix}
        0 & I_m  \\
        -I_m & 0 
        \end{pmatrix} \ ,
    \]
where $I_n$ is the identity $m \times m$ matrix. Then for each $i \in \{ 1, \ldots , m \}$, the brackets of basis elements satisfy $\{e_i, e_{i+m} \} = - \{e_{i+m}, e_i \} = 1$ and all other combinations of basis elements have zero brackets. Using this choice of basis, we can identify $S(\g) \cong \Kbb[x_1, \ldots, x_{2m}]$. Note that this is a degree 2 bracket on $\g$, and differs from the Lie bracket $[-,-]_{\g}$, which we assumed is trivial. Let $m = 1$, so $\g = \Rbb^2$ and we consider coordinates $x,y$. Consider $P,Q \in \Kbb[x,y]$ given by
    \begin{align*}
        P(x,y) = \sum_{i,j} a_{ij}x^i y^j \ ,  &&     Q(x,y) = \sum_{p,q} b_{pq}x^p y^q \ . 
    \end{align*}
The bracket is 
    \[ 
        \{ P,Q\} = \sum_{i,j,p,q} a_{ij} b_{pq}\{ x^i y^j, x^p y^q\} = \sum_{i,j,p,q} a_{ij} b_{pq} (jp - iq) x^{i+p-1} y^{q+j-1}. 
    \]
If we pick $P = xy, Q = 4x^2$ and choose the bracket as 
    \[ 
        \{ P,Q\} = \frac{\del P}{\del x} \frac{\del Q}{\del y} - \frac{\del Q}{\del x} \frac{\del P}{\del y} = - 8x^2 \ , 
    \]
with the general formula being
    \[
        \{ P,Q\} = \sum_{i = 1}^m \frac{\del P}{\del x_i} \frac{\del Q}{\del x_{i+m}} - \frac{\del Q}{\del x_{i+m}} \frac{\del P}{\del x_i}
    \]
then this gives a bracket of degree $2$ on $\Kbb[x_1, \ldots, x_{2m}]$. The corresponding degree $2$ filtered Poisson algebra is $\A_{2m}(\g)$. The algebra $\A_{2m}(\g)$ has $2m$ generators $(x_i ,y_i) $, $1 \leq i \leq m$ and relations
    \begin{align*}
        x_i y_i - y_i x_i = 1 \ , 1 \leq i \leq m && x_i y_j - y_j x_i = y_i y_j - y_j y_i = x_i x_j - x_j x_i = 0 \ , i \neq j \ . 
    \end{align*}
$\A_{2m}(\g)$ is called the $2m$-th Weyl algebra.
\end{example}


Let us conclude this section with the following remark, which puts the above algebraic constructions in the context of smooth manifolds.

\begin{remark}
The constructions of tensor algebra, symmetric algebra, and exterior algebra can be extended to the case of smooth manifolds by considering the tangent (or cotangent) space at a given point. This leads to the notion of \emph{tensor fields} as sections of the bundle $\otimes TM \to M$, \emph{symmetric tensor fields} as sections of the bundle $S(TM) \to M$, and \emph{differential forms} as sections of the bundle $\Lambda(TM) \to M$. We will speak more about these objects in chapter concerning the differential calculus on Poisson manifolds.
\end{remark}

\section{Universal enveloping and PBW theorem}

Let $\g$ be a finite dimensional Lie algebra over $\Kbb$. We assume that there is an associative algebra $\A$ such that $\g$ can be embedded in $\A$ (so that one can multiply elements of $\g$), and the Lie bracket of $\g$ is given by the commutator in $\A$, i.e. for all $x,y \in \g$
    \begin{align*}
        [x, y] = xy - yx \ , 
    \end{align*}
where the product on the right-hans side is the product in $\A$. Let us denote by $\{e_i\}_{1 \leq i \leq n}$ a~basis of $\g$ as a vector space. The Lie structure is defined by the  \emph{structure constants} $c_{ij}^k$ 
    \begin{align*}
        [e_i,e_j] = c_{ij}^k e_k .
    \end{align*}
We consider $S \subset \g \times \g$, a set of pairs, defined as follows
    \[
        S = \{ (e_ie_j, e_je_i + \sum_{k = 1}^n c_{ij}^k e_k) \in \g \times \g  \ |\ i > j\} \ .
    \]
It is easy to verify that there is no ambiguity in the presentation of the triple product $e_i e_j e_k$ with $i > j > k$, since
    \begin{align*}
        (e_i e_j)e_k & = (\sum_{l=1}^n c_{ij}^l e_l + e_j e_i)e_k = \sum_{l=1}^n c_{ij}^l e_l e_k + e_j e_i e_k  \ , \\
        e_i (e_j e_k) & = e_i (\sum_{m=1}^n c_{jk}^m e_m + e_k e_j) = \sum_{m=1}^n c_{jk}^m e_i e_m + e_i e_k e_j  \ .
    \end{align*}
Thus the difference vanishes due to the Jacobi identity \eqref{Jacobi identity}
    \[ 
        (e_i e_j)e_k -  e_i (e_j e_k) = \sum_{l=1}^n \sum_{m=1}^n (c_{ij}^m c_{km}^l + c_{jk}^m c_{im}^l + c_{ki}^m c_{jm}^l) e_l = 0  \ .
    \]


\subsection{Universal enveloping algebra}
Let $\g$ be a $n$-dimensional ($n < \infty$) Lie algebra, $\{ e_i\}_{1 \leq i \leq n}$ a basis of $\g$. Let $T (\g) = \oplus_i T^i (\g)$ be the tensor algebra of the underlying vector space of $\g$. 

\begin{proposition}
$T(\g)$ is an associative algebra with respect to the tensor product $\otimes \colon T^p(\g) \times T^q(\g) \to T^{p+q}(\g)$ 
    \begin{align*}
         (x_1 \otimes \dots x_p , y_1 \otimes \dots y_q) \mapsto x_1 \otimes \dots x_p \otimes y_1 \otimes \dots y_q  \ . 
    \end{align*}
\end{proposition}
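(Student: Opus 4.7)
The plan is to verify that the concatenation formula defines a well-defined bilinear associative product on the graded space $T(\g) = \bigoplus_{k \geq 0} T^k(\g)$. The argument is essentially bookkeeping built on the universal property of tensor products of vector spaces, so I would organize it in three steps.

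First, I would show the formula gives a well-defined bilinear map on each homogeneous component. The assignment $(x_1, \ldots, x_p, y_1, \ldots, y_q) \mapsto x_1 \otimes \cdots \otimes x_p \otimes y_1 \otimes \cdots \otimes y_q$ is $(p+q)$-multilinear as a map $\g^{p+q} \to T^{p+q}(\g)$. By the universal property of the tensor product applied separately in the first $p$ and last $q$ arguments, it factors uniquely through a bilinear map $T^p(\g) \times T^q(\g) \to T^{p+q}(\g)$, equivalently through the canonical linear isomorphism $T^p(\g) \otimes T^q(\g) \cong T^{p+q}(\g)$. This shows that the product is well-defined on decomposables and extends uniquely by bilinearity to all of $T^p(\g) \times T^q(\g)$.

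Second, I would extend the product to the full algebra. Any $a \in T(\g)$ has a unique finite decomposition $a = \sum_p a_p$ with $a_p \in T^p(\g)$; similarly $b = \sum_q b_q$. Setting $a \otimes b := \sum_{p,q} a_p \otimes b_q$ gives a finite sum in $T(\g)$, and is bilinear because each homogeneous piece is. It also makes the product respect the grading, since $T^p(\g) \otimes T^q(\g) \subset T^{p+q}(\g)$.

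Third, for associativity it suffices by bilinearity to check $(a \otimes b) \otimes c = a \otimes (b \otimes c)$ on decomposables $a = x_1 \otimes \cdots \otimes x_p$, $b = y_1 \otimes \cdots \otimes y_q$, $c = z_1 \otimes \cdots \otimes z_r$. After applying the canonical associativity isomorphism $(U \otimes V) \otimes W \cong U \otimes (V \otimes W)$ for tensor products of vector spaces, both sides coincide with the concatenation $x_1 \otimes \cdots \otimes x_p \otimes y_1 \otimes \cdots \otimes y_q \otimes z_1 \otimes \cdots \otimes z_r \in T^{p+q+r}(\g)$. The element $1 \in T^0(\g) = \Kbb$ serves as a two-sided unit by the same computation.

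There is no real obstacle; the one subtle point is the invocation of the canonical associativity isomorphism for tensor products of vector spaces (the pentagon coherence), which is typically suppressed by identifying $(U \otimes V) \otimes W$ with $U \otimes V \otimes W$. Note that no use is made of the Lie bracket of $\g$ in this proposition: only the underlying vector space structure is relevant, which is why the conclusion is simply that $T(\g)$ is an associative (unital) $\Kbb$-algebra.
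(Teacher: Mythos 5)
Your proof is correct and complete; note that the paper states this proposition without any proof at all, treating it as routine. Your three-step argument (well-definedness via the universal property of the tensor product, bilinear extension across the grading, associativity and unitality checked on decomposables) is exactly the standard verification the paper implicitly relies on, and your closing observation that the Lie bracket of $\g$ plays no role here is also consistent with how the paper uses only the underlying vector space at this stage.
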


\begin{remark}
$T(\g) $ is generated as $T(\g) = \Kbb <e_1, \dots, e_n> $ (in the algebra sense). Put differently, the tensor algebra is isomorphic (as a $\Kbb$-algebra) to a~free, associative, non-commutative $\Kbb$-algebra on $n$-generators. 
\end{remark}

Consider a subspace $\I \subset T(\g)$, generated as 
    \begin{align}\label{ideal for the envelping algebra}
        \I : = <t_1 \otimes ([x,y] - x \otimes y + y \otimes x ) \otimes t_2 \ |\  t_1, t_2 \in T(\g), x,y \in \g >  \ .
    \end{align}
Note that $[x,y] - x \otimes y + y \otimes x \in \g \oplus T^2(\g)$.

\begin{proposition}
$\I$ is a (two-sided) ideal in $T(\g)$, i.e. $\forall j \in \I, t \in T(\g): j\otimes t, t \otimes j \in \I$.
\end{proposition}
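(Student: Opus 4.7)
The plan is to reduce the statement to its generators and then exploit the associativity of $\otimes$, together with the fact that $T(\g)$ is closed under $\otimes$. Concretely, a general element $j \in \I$ is a finite $\Kbb$-linear combination
\[
    j = \sum_\alpha \lambda_\alpha \, t_1^{(\alpha)} \otimes r_{x_\alpha,y_\alpha} \otimes t_2^{(\alpha)}, \qquad r_{x,y} := [x,y] - x \otimes y + y \otimes x,
\]
with $\lambda_\alpha \in \Kbb$, $t_1^{(\alpha)}, t_2^{(\alpha)} \in T(\g)$, and $x_\alpha, y_\alpha \in \g$. Since $\otimes$ is $\Kbb$-bilinear, it is enough to prove the absorption property $t \otimes g, \, g \otimes t \in \I$ for a single generator $g = t_1 \otimes r_{x,y} \otimes t_2$ and an arbitrary $t \in T(\g)$.

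The key step is then a direct application of associativity of $\otimes$ in $T(\g)$: for left multiplication,
\[
    t \otimes g = t \otimes \bigl(t_1 \otimes r_{x,y} \otimes t_2\bigr) = (t \otimes t_1) \otimes r_{x,y} \otimes t_2,
\]
and since $t \otimes t_1 \in T(\g)$, the right-hand side is again a generator of $\I$; similarly, for right multiplication,
\[
    g \otimes t = t_1 \otimes r_{x,y} \otimes (t_2 \otimes t) \in \I,
\]
because $t_2 \otimes t \in T(\g)$. Extending by $\Kbb$-linearity in $j$ and $t$ finishes the proof.

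There is essentially no substantive obstacle here; the proposition is a bookkeeping statement whose content is that the generating set used to define $\I$ is already stable under left and right tensoring. The only care needed is (i) to remember that $\I$ is by definition the $\Kbb$-linear span (not merely the set) of the indicated expressions, so the reduction to generators via bilinearity must be made explicit, and (ii) to use associativity of $\otimes$ to repackage $t \otimes t_1$ (respectively $t_2 \otimes t$) as the new outer factor of a generator. A brief remark that $\g \subset T(\g)$ and hence $r_{x,y} \in T(\g)$, so that all expressions live inside $T(\g)$, makes the argument fully rigorous.
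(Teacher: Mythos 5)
Your proof is correct and is exactly the argument the paper has in mind: the paper dismisses the proposition with ``the proof is obvious from the form of generators of $\I$,'' and what you have written is the explicit version of that observation (reduction to generators by bilinearity, then associativity of $\otimes$ to absorb $t$ into the outer tensor factors). Nothing is missing.
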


\begin{proof}
The proof is obvious from the form of generators of $\I$. 
\end{proof}

\begin{remark}
Let $\A$ be an associative algebra. We will denote by $\A_{\Lie}$ the corresponding Lie algebra $\A_{\Lie}$, with the Lie bracket given by the commutator $[x,y]:= xy - yx$, for all $x,y \in \A$. 
\end{remark}

\begin{definition}[universal enveloping algebra]
Let $\g$ be a Lie algebra. The factor space
    \begin{align*}
        \U (\g) : = T(\g)/\I \ ,
    \end{align*}
where $\I$ is given by \eqref{ideal for the envelping algebra}, is called the universal enveloping algebra of $\g$. 
\end{definition}

\begin{proposition}
$\U (\g)$ is an associative algebra. Moreover, $\U (\g)$ can be endowed with a Lie algebra structure, which is compatible with the Lie algebra structure on $\g$.
\end{proposition}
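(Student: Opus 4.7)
The plan is as follows. First I would observe that since $T(\g)$ is an associative algebra under the tensor product $\otimes$ and $\I$ is a two-sided ideal, the quotient space $\U(\g) = T(\g)/\I$ inherits a canonical associative product, defined on cosets by $[t_1]\cdot[t_2] := [t_1 \otimes t_2]$. Well-definedness follows from the two-sided ideal property of $\I$ (if $t_1 = t_1' + j_1$ and $t_2 = t_2' + j_2$ with $j_i \in \I$, then $t_1 \otimes t_2 - t_1' \otimes t_2' = t_1' \otimes j_2 + j_1 \otimes t_2' + j_1 \otimes j_2 \in \I$), associativity is inherited from that of $\otimes$, and the image of $1 \in T^0(\g) = \Kbb$ under the projection $\pi \colon T(\g) \to \U(\g)$ serves as a unit.

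Next, recall that every associative algebra $\A$ canonically carries a Lie algebra structure $\A_{\Lie}$ with bracket $[u,v] := uv - vu$; skew-symmetry is immediate and the Jacobi identity is a standard consequence of bilinearity and associativity of the product. Applying this construction to $\U(\g)$ endows $\U(\g)_{\Lie}$ with a Lie algebra structure.

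For the compatibility statement, let $\iota \colon \g \to \U(\g)$ denote the composition of the canonical inclusion $\g = T^1(\g) \hookrightarrow T(\g)$ with $\pi$. I would verify that $\iota$ is a morphism of Lie algebras, namely
\[
    \iota([x,y]_{\g}) = \iota(x) \iota(y) - \iota(y) \iota(x) \qquad \text{for all } x,y \in \g.
\]
This identity is built into the definition of $\I$: taking $t_1 = t_2 = 1$ in \eqref{ideal for the envelping algebra} shows that $[x,y]_\g - x \otimes y + y \otimes x \in \I$, hence vanishes under $\pi$, which is exactly the desired equality in $\U(\g)_{\Lie}$.

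The main subtlety, worth flagging even though not strictly demanded by the statement as written, is whether $\iota$ is \emph{injective}, so that $\g$ genuinely embeds as a Lie subalgebra of $\U(\g)_{\Lie}$ rather than merely mapping into it. This is not a formal consequence of the quotient construction, since a priori the ideal $\I$ could intersect $T^1(\g) = \g$ nontrivially; ruling this out is precisely the content of the Poincaré--Birkhoff--Witt theorem, which I expect will be proved separately (via the ordered monomial basis $\{e_1^{i_1}\cdots e_n^{i_n}\}$ and a filtration argument comparing $\U(\g)$ with $S(\g)$).
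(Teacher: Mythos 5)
Your argument is correct and follows essentially the same route as the paper: the product is defined on cosets via $[t_1]\cdot[t_2]:=[t_1\otimes t_2]$, the Lie structure is the commutator, and the compatibility $[x,y]=xy-yx$ in $\U(\g)$ falls out of the generators of $\I$ vanishing under the projection. The extra points you raise are also consistent with the text: the well-definedness of the product is deferred there to an exercise, and the injectivity of the canonical map $\epsilon\colon\g\to\U(\g)$ is indeed not established at this stage but only later by the Poincar\'e--Birkhoff--Witt theorem, exactly as you flag.
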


\begin{proof}
Let $u_i = [t_i] \in \U (\g) , i = 1,2 $ and define $u_1 \cdot u_2 .: = [t_1 \otimes t_2]$. There is a~canonical embedding of the field $\Kbb$ and the Lie algebra $\g$ in $\U (\g)$. Consider the canonical embedding $\iota$ of $\g$ in $T (\g)$, and the canonical projection $\pr \colon T(\g) \to \U(\g) $. Then the embedding of $\g$ is given by the composition $\epsilon : = \pr \circ \iota $, 
    \begin{align}\label{canon. embed to univ. envel. alg.}
        \epsilon : = \pr \circ \iota \colon \g \to \U(\g) \ ,
    \end{align}
$x \mapsto u_x : = x + J, J \in \I $ (and in the same way we can embed $\Kbb \to \U(\g)$). We shall identify $x$ and $u_x$. Let $x,y \in \g$. Then $[x,y] - x\otimes y + y \otimes x \in \I$ and we have
    \begin{align*}
        \epsilon ([x,y] - x\otimes y + y \otimes x) & = u_{[x,y]} - u_x u_y + u_y u_x \overset{\text{\tiny{notation}}}{=} [x,y] - xy + yx = 0  \ .
    \end{align*}
Hence $[x,y] = xy - yx$ in $\U(\g)$. This means that the Lie algebra structure on $\U(\g)$, which is given by the commutator, can be restricted to $\g$ and the canonical embedding $\epsilon \colon \g \to \U (\g)_{\Lie}$ is a homomorphism of Lie algebras, i.e. 
    \[ 
        \epsilon ([x,y]) = [ \epsilon (x),  \epsilon (y) ] = \epsilon (x)  \epsilon (y) -  \epsilon (y) \epsilon (x) 
    \]
for all $x,y \in \g$.
\end{proof}

\begin{exercise}
Show that the product in $\U (\g) $ is well-defined.
\end{exercise}

\textbf{Universal property of $\U(\g)$}. Let $\epsilon \colon \g \to \U(\g)$ be the canonical Lie algebra homomorphism \eqref{canon. embed to univ. envel. alg.}. The universal enveloping algebra satisfies the following universal property. For any associative, unital algebra $\A$ (over the same field as $\g$) and any Lie algebra homomorphism $\psi \colon \g \to \A_{\Lie}$, there is a unique associative, unital $\Kbb$-algebra homomorphism $\tilde{\psi} \colon \U(\g)_{\Lie} \to \A_{\Lie} $ s.t. $\psi = \tilde{\psi} \circ \epsilon $, i.e. the following diagram commutes
    \begin{equation}\label{UP of the universal enveloping algebra}
        \begin{tikzcd}
        \g \arrow{r}{\epsilon} \arrow[swap]{dr}{\psi} & \U(\g)_{\Lie} \arrow[dashed]{d}{\exists! \tilde{\psi}}  \\
        & \A_{\Lie}
        \end{tikzcd}
    \end{equation}    

\begin{remark}
For $\g = 0$ we have $\U(\g) = \Kbb$. For $\g$ abelian (i.e. the bracket of $\g$ is trivial), we have $\U(\g) = \Kbb [\g]$, where $\Kbb [\g]$ is a polynomial ring over $\g$.\footnote{Every element of $\g$ serves as a variable. One can think of $\Kbb [\g]$ as $\Kbb[g_1, g_2, \ldots]$ for all $g_i \in \g$.}
\end{remark}

\subsection{Poincaré-Birkhoff-Witt (PBW) theorem}

\begin{theorem}[PBW theorem]
 Let $\g$ be a $\Kbb$-Lie algebra and $\{e_i\}_{1 \leq i \leq n}$ be a basis of $\g$ as a $\Kbb$-vector space. Let $(\U(\g), \epsilon)$ be the universal enveloping algebra. Then $\{1, \epsilon (e_i) \}_{1 \leq i \leq n}$ is a basis of $\U(\g)$ as a $\Kbb$-algebra and the canonical embedding $\epsilon \colon \g \to \U(\g)$ is injective. 
\end{theorem}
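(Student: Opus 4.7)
The plan is to reduce the PBW theorem to an isomorphism between the symmetric algebra $S(\g)$ and the associated graded of $\U(\g)$ with respect to its natural filtration, which is the filtration framework already developed in the section on graded Poisson algebras. First I would define $\U_k(\g) := \pr\bigl(\bigoplus_{i \leq k} T^i(\g)\bigr)$, where $\pr \colon T(\g) \to \U(\g)$ is the canonical projection; since the ideal $\I$ of \eqref{ideal for the envelping algebra} is generated by elements lying in $\g \oplus T^2(\g)$ that mix degrees, this filtration is exhaustive with $\U_0 = \Kbb$. Modulo $\I$, the defining relation gives $xy - yx = [x,y] \in \U_1$, so $\U_k \cdot \U_l \subset \U_{k+l}$ and $[\U_k, \U_l] \subset \U_{k+l-1}$. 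Consequently $\gr \U(\g) := \bigoplus_k \U_k / \U_{k-1}$ is a graded commutative unital $\Kbb$-algebra.

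Next I would invoke the universal property of the symmetric algebra: the composite $\g \xrightarrow{\epsilon} \U_1 \twoheadrightarrow \U_1/\U_0 \hookrightarrow \gr \U(\g)$ is $\Kbb$-linear into a commutative algebra, and therefore extends uniquely to a graded algebra homomorphism $\varphi \colon S(\g) \to \gr \U(\g)$. Because $\U(\g)$ is generated as an algebra by $\epsilon(\g)$, the map $\varphi$ is automatically surjective. All of the content of PBW is then concentrated in the assertion that $\varphi$ is in fact an isomorphism.

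To prove injectivity of $\varphi$, I would construct a representation $\rho \colon \U(\g) \to \End_{\Kbb}(S(\g))$ in which the action of $\epsilon(e_i)$ on an ordered monomial $e_{j_1} \cdots e_{j_k} \in S(\g)$ (with $j_1 \leq \cdots \leq j_k$) equals $e_i\, e_{j_1} \cdots e_{j_k}$ when $i \leq j_1$, with the remaining cases prescribed recursively by forcing $\rho(e_i)\rho(e_j) - \rho(e_j)\rho(e_i) = \rho([e_i,e_j])$, via a double induction on the total symmetric degree $k$ and on the number of inversions in the tuple $(i,j_1,\ldots,j_k)$. The hard part will be the well-definedness of this recursive prescription, i.e.\ verifying that the two ways of resolving a pair of adjacent inversions agree; the single genuinely non-trivial consistency check occurs for three indices $i > j > \ell$, where the obstruction is exactly the associator $(e_i e_j) e_\ell - e_i (e_j e_\ell)$, which is killed by the Jacobi identity in $\g$, exactly as in the structure-constant computation $\sum_{l,m}(c_{ij}^m c_{km}^l + c_{jk}^m c_{im}^l + c_{ki}^m c_{jm}^l)e_l = 0$ recorded at the beginning of this section.

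Once $\rho$ is constructed, the map $\sigma := \rho(-)(1) \colon \U(\g) \to S(\g)$ sends an ordered product $\epsilon(e_{i_1})\cdots \epsilon(e_{i_k})$ with $i_1 \leq \cdots \leq i_k$ to the monomial $e_{i_1} \cdots e_{i_k}$, and is filtration-preserving with $\gr \sigma$ a left inverse to $\varphi$; hence $\varphi$ is injective. This shows that the ordered monomials in $\epsilon(e_i)$ (together with $1$) are $\Kbb$-linearly independent in $\U(\g)$, while the defining relation $\epsilon(e_i)\epsilon(e_j) = \epsilon(e_j)\epsilon(e_i) + \epsilon([e_i,e_j])$ allows any product of generators to be rewritten as a $\Kbb$-linear combination of such ordered monomials; together these give the PBW basis. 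The injectivity of $\epsilon \colon \g \to \U(\g)$ is then immediate, since $\{\epsilon(e_i)\}_{1 \leq i \leq n}$ is part of this basis.
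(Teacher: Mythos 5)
Your proposal is correct, but it follows a genuinely different route from the paper's. The paper's sketch works with the coproduct: it defines $\tilde{\Delta} \colon T(\g) \to T(\g) \otimes T(\g)$ by $\tilde{\Delta}(t) = t \otimes 1 + 1 \otimes t$ on $\g$, descends it to $\Delta$ on $\U(\g)$, and proves injectivity of $\Sym(\g) \to \U(\g)$ by induction on the symmetric degree, using the fact that for $f \in \Sym_{\leq k+1}(\g)$ the defect $g = \tilde{\Delta}(f) - f\otimes 1 - 1\otimes f$ lands in $\Sym_{\leq k}(\g) \otimes \Sym_{\leq k}(\g)$; the base case is handled by \emph{assuming} a faithful representation $\rho \colon \g \to \End(V)$, and the whole argument needs $\operatorname{char}\Kbb = 0$ (as the paper's subsequent observation about $v^p$ in characteristic $p$ makes explicit). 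You instead give the classical filtration-plus-representation proof: you reduce PBW to the injectivity of the canonical surjection $S(\g) \to \gr\U(\g)$ and then \emph{construct} a representation of $\U(\g)$ on $S(\g)$ by a recursion on ordered monomials whose only non-trivial consistency check is the Jacobi identity for a decreasing triple of indices — exactly the associator computation the paper records at the start of the section but never actually deploys in its proof. What each approach buys: yours is self-contained (it produces the faithful representation rather than presupposing one), is characteristic-free, and delivers the ordered-monomial basis directly from the left inverse $\sigma = \rho(-)(1)$; the paper's is shorter once the Hopf-algebra structure on $\U(\g)$ is available and meshes naturally with the symmetrization map $\s_k$ introduced earlier, at the cost of extra hypotheses. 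The one place where you owe the reader more is the well-definedness of the recursive prescription for $\rho$ (the double induction on degree and number of inversions); you correctly flag it as the hard part and identify where the Jacobi identity enters, but in a full write-up that verification is the bulk of the work.
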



\begin{remark}
We can rephrase the above theorem as follows. Let $\{e_i\}_{1 \leq i \leq n}$ be a basis of $\g$. For $k \in \Nbb_0$ consider an index set $I_k = \{ i_1, \dots i_k\}$, $I_0 = \emptyset$. We say that $I_k$ is \emph{increased} if $i_1 \leq \dots \leq i_k$. Denote by $E_{I_k} \in \U(\g) $ the element $E_{I_k} = e_{i_1} \ldots e_{i_k}$ and $E_{I_0} = 1$. Then the following set is a basis for $\U(\g)$ as a~$\Kbb$-vector space
    \begin{align*}
        \{ E_{I_k} \ | \ \text{$I_k$ increasing, } k \in \Nbb_0 \}  \ . 
    \end{align*}
In other words, elements of the form $e_1^{\alpha_1} \dots e_k^{\alpha_k}$, where $ \alpha_1 \leq \ldots \leq \alpha_n$, are linearly independent and generate $\U(\g)$. 

\end{remark}

\begin{proposition}
 $\U(\g)$ is a filtered algebra. The filtration is given by a degree of elements $x \in \U(\g)$, which is defined 
    \begin{itemize}
        \item either as the degree of the polynomial, which describes $x$ (after a choice of a base by PBW),
        \item or by the set $\U_p(\g)$, which is the image of $\oplus_{i = 1}^p T^i(\g)$ under the canonical projection $\pr \colon T(\g) \to \U(\g)$, i.e. $\U_p(\g)$ is the set of elements of degree $\leq p$.
    \end{itemize}
\end{proposition}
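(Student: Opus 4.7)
The natural plan is to pull the filtration down from the tensor algebra and verify the three filtered-algebra axioms, then use the PBW basis to identify the two candidate notions of degree. Recall that $T(\g) = \bigoplus_{i \geq 0} T^i(\g)$ is graded, hence filtered by $T_p(\g) := \bigoplus_{i=0}^p T^i(\g)$, and that $\U(\g) = T(\g)/\I$ with canonical projection $\pr\colon T(\g) \to \U(\g)$. Define $\U_p(\g) := \pr(T_p(\g))$ as in the statement; the whole argument then amounts to transporting the filtered structure of $T(\g)$ across $\pr$, and matching this with PBW-degree.

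First I would check the three filtered algebra axioms for $\{\U_p(\g)\}_{p \in \Nbb_0}$. The inclusions $\U_p(\g) \subset \U_{p+1}(\g)$ are immediate from $T_p(\g) \subset T_{p+1}(\g)$ and $\Kbb$-linearity of $\pr$. Exhaustion $\bigcup_p \U_p(\g) = \U(\g)$ follows from surjectivity of $\pr$ and $\bigcup_p T_p(\g) = T(\g)$. For multiplicativity, take $u \in \U_p(\g)$ and $v \in \U_q(\g)$, write $u = \pr(a)$, $v = \pr(b)$ with $a \in T_p(\g)$, $b \in T_q(\g)$, and observe that the product in $\U(\g)$ is defined by $u \cdot v = \pr(a \otimes b)$; since $a \otimes b \in T_{p+q}(\g)$, we get $u \cdot v \in \U_{p+q}(\g)$.

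Next I would identify this filtration with the PBW-degree. Fix a basis $\{e_i\}_{1 \leq i \leq n}$ of $\g$; by the PBW theorem, each $x \in \U(\g)$ admits a unique expression as a $\Kbb$-linear combination of ordered monomials $e_{i_1} \cdots e_{i_k}$ with $i_1 \leq \cdots \leq i_k$, and its polynomial degree $d(x)$ is the largest $k$ appearing with nonzero coefficient. The inclusion $\{x : d(x) \leq p\} \subset \U_p(\g)$ is clear: an ordered monomial of length $\leq p$ is the image under $\pr$ of a tensor in $T_p(\g)$. For the reverse inclusion, take $x \in \U_p(\g)$ and choose $a \in T_p(\g)$ with $\pr(a) = x$; expanding $a$ in the tensor basis $\{e_{i_1} \otimes \cdots \otimes e_{i_k}\}_{k \leq p}$ and applying the relations $e_i e_j = e_j e_i + [e_i, e_j]$ iteratively to reorder, each swap replaces a length-$k$ monomial by a sum of one length-$k$ ordered monomial and one length-$(k{-}1)$ term, so the process terminates with a PBW expression of degree $\leq p$. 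Hence $\{x : d(x) \leq p\} = \U_p(\g)$, which is precisely the equivalence of the two definitions.

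The only subtle point—and what I expect to be the main obstacle—is that the reordering argument above must actually produce a well-defined result of degree $\leq p$; a priori one could worry that repeated application of $e_i e_j = e_j e_i + [e_i, e_j]$ yields infinitely many cascading lower-degree terms or that the PBW expression depends on the order of swaps. Both issues are settled by the PBW theorem, which guarantees uniqueness of the ordered-monomial expansion and hence a well-defined notion of polynomial degree. With PBW in hand, one concludes both that the filtration $\{\U_p(\g)\}$ is strictly increasing (in particular $\epsilon\colon\g \hookrightarrow \U(\g)$ is injective, consistent with the preceding theorem), and that the associated graded algebra $\gr \U(\g)$ is generated in degree one, paving the way for identifying $\gr \U(\g) \cong S(\g)$ as Poisson algebras via the earlier general construction.
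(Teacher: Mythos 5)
Your proof is correct; note that the paper states this proposition without any proof at all, so there is nothing to compare against — your argument (transport the filtration $T_p(\g)=\bigoplus_{i\le p}T^i(\g)$ along $\pr$, verify the three filtered-algebra axioms using $u\cdot v=\pr(a\otimes b)$, then match with PBW-degree by straightening monomials) is the standard one and fills a genuine gap in the text. Two small remarks: the paper writes the image of $\oplus_{i=1}^p T^i(\g)$, omitting $T^0(\g)=\Kbb$, and you are right to start at $i=0$ so that $1\in\U_0(\g)$; and in the reordering step a single swap does not yet produce an \emph{ordered} length-$k$ monomial, so strictly speaking you need a double induction (on the length $k$ and on the number of inversions) to see termination — but you correctly identify that the only genuinely delicate point, the well-definedness of the polynomial degree, is exactly what the PBW theorem supplies.
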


\begin{proof}[Sketch of the proof of the PBW theorem]
We will consider the case when $\g$ comes equipped with a faithful representation.\footnote{The kernel of the representation, as a homomorphism from $\g$, is trivial. Example: $\g$ acts faithfully on $C^{\infty}(\g)$. Another example: $\g$ is a matrix algebra ($n \times n$ matrices), then $\g$ acts faithfully on $\Rbb^n$.} 

Consider the following algebra homomorphism 
    \begin{align*}
        \tilde{\Delta} &\colon T(\g) \to T(\g) \otimes T(\g) \ , 
    \end{align*}
such that for all $t \in \g$, $\tilde{\Delta}$ satisfies
    \begin{align*}
        \tilde{\Delta} & (t) = t \otimes 1 + 1 \otimes t  \ , t \in \g \ . 
    \end{align*}
Notice that $1 \otimes T(\g) $ and $T(\g) \otimes 1$, commute with respect to the algebraic structure of $T(\g)$. 

\begin{lemma}
The algebra homomoprhism $\tilde{\Delta}$ descends to a map $\Delta \colon \U(\g) \to \U(\g) \otimes \U(\g) $ ($\Delta$ is a coproduct with respect to the Hopf algebra structure on $\U(\g)$).
\end{lemma}

\emph{Proof of the lemma.}
The map $\Delta$ is given by the following commutative diagram 
 \begin{equation*}
        \begin{tikzcd}
        \g \arrow{r}{\epsilon} \arrow[swap]{dr}{\iota} & \U(\g) \arrow{d}{\tilde{\iota}} \arrow[dashed]{r}{\Delta} & \U(\g) \otimes \U(\g)  \\
        & T(\g) \arrow{r}{\tilde{\Delta}} & T(\g) \otimes T(\g) \arrow[swap]{u}{\pr \otimes \pr}
        \end{tikzcd}
    \end{equation*} 
where $\iota$ and $\epsilon$ are the canonical embeddings of $\g$ into the corresponding tensor algebra $T(\g)$ and the universal enveloping algebra $\U(\g)$, respectively. The map $\tilde{\iota}$ is given by the universal property of $\U(\g)$ (consider $T(\g)$ with trivial bracket), and $\pr$ is the canonical quotient projection. All tensor products are considered over $\Kbb$ (the field over which we consider $\g$). 
\hfill \qedhere

\hfill \newline

Since the basis of $\g$ gives rise to the basis of $S(\g)$, another formulation of the PBW theorem is that $S(\g)$ gives rise to basis of $\U (\g)$. Let $\Sym(\g) : = \oplus_{k = 0}^{\infty} \Sym^k (\g)$ be the space of symmetric tensors, where $t \in \Sym^k (\g)$ if
    \begin{align*}
        t(u_{\sigma(1)}, \ldots, u_{\sigma(k)} = t(u_1, \ldots, u_k)
    \end{align*}
for all $k$-permutations $\sigma$. Clearly we have an injection $\Sym(\g) \xhookrightarrow{j} T(\g)$. Too prove the PBW theorem, we prove that the composition 
    \[
        \begin{tikzcd}
        \Sym(\g) \arrow{r}{j} & T(\g) \arrow{r}{\pr} & \U(\g) 
        \end{tikzcd}
    \]
is injective.

\noindent \textbf{Induction on $k$.} 
    \begin{enumerate}
        \item Suppose $k = 1$. Then we can use the existence of faithful representation of $\g$ on some vector space $V$. This amounts to the existence of an injective $\Kbb$-algebra homomorphism $\rho \colon \g \to \End(V)$. Note that $\Sym_{\leq 1} (\g) = \g$. Using the universal property of $\U(\g)$, we have the following commutative diagram 
            \begin{equation*}
                \begin{tikzcd}
                \g \arrow{r}{\epsilon} \arrow[swap]{dr}{\rho} & \U(\g) \arrow[dashed]{d}{\tilde{\rho}}  \\
                    & \End(V)
                \end{tikzcd}
            \end{equation*}
        Since $\rho = \tilde{\rho} \circ \epsilon$ and $\rho$ is injective, $\epsilon$ is injective. Thus $\pr \circ \epsilon = \pr \circ j|_{\g}$ is injective.   
        \item Take $\Sym_{\leq k} (\g): = \oplus_{i = 1}^k \Sym^i (\g)$ and suppose that the map $\pr \circ j$ is injective on $\Sym_{\leq k}(\g)$. Consider $f \in \Sym_{\leq k + 1}(\g)$.  We want to prove that if
            \begin{align*}
                (\pr \circ j )(f) = 0 \in \U(\g)
            \end{align*}
        then $f = 0$. Define  
            \[
                g : = \tilde{\Delta} (f) - f \otimes 1 - 1 \otimes f \  .
            \]
        This implies that $g \in \Sym_{\leq k}(\g) \otimes \Sym_{\leq k}(\g)$. Thus
            \begin{align*}
                (\pr \otimes \pr ) \circ (j \otimes j )(g) = 0 \ , 
            \end{align*}
        implies $g = 0$ by the induction hypothesis. But $g = 0$ is equivalent to 
            \begin{align*}
                \tilde{\Delta} (f) =  f \otimes 1 + 1 \otimes f \ .
            \end{align*}
        Now suppose $(\pr \otimes \pr ) \tilde{\Delta} (f) = 0$, which is equivalent to $(\pr \otimes \pr )(f \otimes 1 + 1 \otimes f ) = 0 \in \U(\g)$. This can happen if and only if $f = 0$. Hence $\Sym (\g)$ can be injectively embedded into $\U (\g)$.
    \end{enumerate}
\end{proof}

\begin{observation}
Note that the assumption $\operatorname{char} \Kbb = 0$ is necessary in the above proof. To see that, suppose $\operatorname{char} \Kbb = p$ and $\dim \g > 1$. Then, using the binomial theorem, $\tilde{\Delta} (v^p) - v^p  \otimes 1 - 1 \otimes v^p = (v  \otimes 1 - 1 \otimes v)^p - v^p  \otimes 1 - 1 \otimes v^p = 0$ for all $v \in \g$.
\end{observation}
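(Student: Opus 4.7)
The claim is that the characteristic zero hypothesis is essential in the inductive PBW argument above, so the plan is to isolate the step where $\operatorname{char} \Kbb = 0$ enters and exhibit an element in characteristic $p$ that defeats it.

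Unwinding the induction, the step at issue is: $f \in \Sym^{\leq k+1}(\g)$ with $\tilde{\Delta}(f) = f \otimes 1 + 1 \otimes f$ and $\epsilon(f) = 0$ should force $f = 0$. This relies on the underlying fact that the primitive elements of $T(\g)$ under $\tilde{\Delta}$ lie in $\Kbb \oplus \g$; combining this characterization with injectivity of $\epsilon$ on $\g$ (the base case) and $\epsilon(f) = 0$ then gives $f = 0$. My plan is to produce, in characteristic $p$, a primitive element of $T(\g)$ not of degree $\leq 1$, thereby breaking this characterization.

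The construction is the canonical one: take any nonzero $v \in \g$ and form $f = v^p \in T^p(\g)$. Since $\tilde{\Delta}$ is an algebra homomorphism with $\tilde{\Delta}(v) = v \otimes 1 + 1 \otimes v$, and since $v \otimes 1$ and $1 \otimes v$ commute in $T(\g) \otimes T(\g)$, the binomial theorem applies:
\[
    \tilde{\Delta}(v^p) \;=\; (v \otimes 1 + 1 \otimes v)^p \;=\; \sum_{k=0}^{p} \binom{p}{k}\, v^{k} \otimes v^{\,p-k}.
\]
In characteristic $p$, the binomial coefficient $\binom{p}{k}$ vanishes for every $0 < k < p$ (for instance by Lucas's theorem, or the elementary fact that $p \mid \binom{p}{k}$ for those $k$), so all mixed terms drop out and $\tilde{\Delta}(v^p) = v^p \otimes 1 + 1 \otimes v^p$. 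Hence $v^p$ is primitive in $T(\g)$ but sits in $\Sym^p(\g)$ and is plainly nonzero, violating the characterization and hence the intended inductive reduction.

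The main subtlety I would flag is that the argument above shows only the collapse of this specific proof route via primitive elements, not the falsity of PBW itself: the theorem remains true in positive characteristic, provable for instance by Bergman's diamond lemma or by a direct construction of the left-regular representation of $\g$ on $S(\g)$ built from the structure constants, neither of which relies on the coproduct machinery. That distinction is exactly what the Observation is asserting, and the computation with $v^p$ gives the concrete witness.
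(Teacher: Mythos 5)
Your argument is the same as the paper's: both exhibit $v^p$ as a primitive element of $T(\g)$ in characteristic $p$ via the binomial theorem (the middle coefficients $\binom{p}{k}$ vanishing), which is exactly what defeats the inductive step reducing $\Sym_{\leq k+1}(\g)$ to $\Sym_{\leq k}(\g)$. Your version is if anything slightly more careful — you write $\tilde{\Delta}(v)=v\otimes 1+1\otimes v$ with the correct sign, whereas the paper's displayed formula has a stray minus sign — and your closing caveat that this breaks only the proof, not the PBW theorem itself, is a correct and worthwhile clarification.
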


\begin{example}
Let $\dim \g = 2$ and $\{e_1, e_2\}$ be a basis of $\g$ such that $[e_1, e_2] = e_2$. Then we have the relation $e_1e_2 - e_2 e_1 = e_2$ in $\U (\g)$, which means that $E = e_1^{i_1}e_2^{j_1}  e_1^{i_2}e_2^{j_2}  \cdots e_1^{i_k}e_2^{j_k}$ is a linear combination of $e_1^{\alpha_i} e_2^{\alpha_j}$. For example
    \begin{align*}
        E & = e_1 e_2^2 e_1 = e_1 e_2 e_2 e_1 = e_1e_2 (e_1 e_2 - e_2) = e_1 e_2 e_1 e_2 - e_1e_2^2 \\
            & = e_1 (e_1e_2 - e_2) e_2 - e_1e_2^2 = e_1^2e_2^2 - 2 e_1e_2^2
    \end{align*}
\end{example}

\begin{example}
Consider the algebra $\g = \sl_2(\Cbb) = \{ A \in \Mat_2 (\Cbb) \ | \ \tr A = 0 \}$, where $\tr$ is the trace of a matrix and the Lie bracket is given by the commutator. The Chevalley-Eilenberg basis of $\g$ is given by the following matrices 
    \begin{align*}
        e = \begin{pmatrix}
            0 & 1 \\
            0 & 0 
            \end{pmatrix} , & &
        f = \begin{pmatrix}
            0 & 0 \\
            1 & 0
            \end{pmatrix}, & &
        h = \begin{pmatrix}
            1 &  0 \\
            0 & -1 
            \end{pmatrix} 
    \end{align*}
and the brackets are 
    \begin{align*}
        [e,f] = h, && [h,e] = 2e, && [h,f] = -2f \  . 
    \end{align*}
Then $\U(\sl_2(\Cbb))$ is the associative $\Cbb$-algebra with generators $e,f,h$ and relations 
    \begin{align*}
        ef - fe = h, && he - eh = 2e, && hf - fh = -2f \  . 
    \end{align*}
The basis of $\U(\sl_2(\Cbb))$ is given by monoms $e^{\alpha} h^{\beta} f^{\gamma}$, where $\alpha ,\beta, \gamma \in \Nbb_0$. 
\end{example}


\subsection{Universal enveloping and differential operators}

Consider a real, finite dimensional Lie algebra $\g$ and the corresponding universal enveloping algebra $\U(\g)$. There is a natural filtration in $\U (\g)$ given by a sequence of subspaces $\{\U_k \}_{k \geq 0}$ s.t. $\U_k (\g)\subset \U_{k+1}(\g)$ and $\U_k(\g) \cdot \U_l  (\g)\subset \U_{k+l}(\g)$. Moreover, if $\alpha \in \U_k(\g), \beta \in \U_l(\g)$ then the commutator yields
    \begin{align}\label{bracket in the universal enveloping algebra}
        [\alpha, \beta] : = \alpha \beta - \beta \alpha \in \U_{k+l-1}(\g) \ .
    \end{align}
We define 
    \begin{align*}
        \A_k : = \U_k (\g)/ \U_{k-1}(\g) \ , 
    \end{align*}
and 
    \begin{align*}
        \A : = \bigoplus_{k \geq 0} \A_k \ .
    \end{align*}
The associative algebra structure of $\U(\g)$ induces a multiplication in $\A$, which is compatible with the order $\A_k \cdot \A_l \subset \A_{k+l}$. The bracket \eqref{bracket in the universal enveloping algebra} implies commutativity of the multiplication
    \begin{align*}
        \A_k \cdot \A_l = \A_l \cdot \A_k \ , 
    \end{align*}
and defines a bracket operation 
    \begin{align*}
        \{ -,- \} \colon \A_k \times \A_l \to \A_{k+l-1}    
    \end{align*}
on $\A$, making it an associative graded algebra.

\begin{exercise}
Verify that the above operations gives a Poisson algebra structure on $\A$. 
\end{exercise}

Each component $\A_k$ of $\A$ is isomorphic to the space $\Pol_k (\g^*)$ of degree $k$ homogeneous polynomials on $\g^*$ (the dual v. space of $\g$). The algebra $\A$ is isomorphic to 
    \begin{align*}
        \A \cong \Pol (\g^*) : = \bigoplus_{k \geq 0} \Pol_k (\g^*) \ .
    \end{align*}
and the brackets are compatible with the brackets from $\g^*$.

Now consider a smooth manifold $M$. Then the space $C^\infty(M)$ of smooth functions on $M$ is an associative, unital, commutative algebra with respect to addition and multiplication of functions. Hence we can use the definition \ref{algebra of differential operators} to define the filtered, associative algebra of differential operators
    \begin{align*}
        \Diff_* (M) : = \bigcup_{k\geq 0}  \Diff_k (M)  \ ,
    \end{align*}
where $\Diff_k (M)$ is the set of differential operators of order $\leq k$ on $M$
    \begin{align*}
         \Diff_k (M) : = \{ \Delta \colon C^\infty(M) \to C^\infty(M)| \  \delta_{f_0} \circ \ldots \circ \delta_{f_k} (\Delta) = 0  \ , \forall f_i \in C^\infty(M) \} \ , 
    \end{align*}
and for $k = 0$ we define $\Diff_0 (M) = C^\infty(M)$. Notice that $\delta_f (\Delta) = [\Delta, f]$ (see lemma \ref{derivation property of delta} for the definition of $\delta_f$). Recall that the algebra structure of $\Diff_* (M)$ is given by the composition of operators. In example \ref{Lie algebra structure on Diff}, we have seen that the commutator of operators defines a Lie algebra structure on $\Diff_*(M)$. The commutator acts with respect to filtered structure as follows. For $\delta \in \Diff_k, \nabla \in \Diff_l$ we have
    \begin{align*}
        [\delta, \nabla] = \delta \circ \nabla - \nabla \circ \delta \in \Diff_{m+k-1} (M) \ .
    \end{align*}

\begin{remark}\label{diff1 splits into functions and vector fields}
Since $\Diff_0 (M) = C^\infty(M)$, we have $\Diff_1 (M) \cong \Diff_0 (M) \oplus \Gamma (M,TM)$, i.e. $\Diff_1 (M)$ splits into functions and vector fields (seen as differential operators on $C^\infty (M)$. 
\end{remark}

Now we can define the Poisson algebra of degree $1$ as in \ref{symbol algebra}
    \begin{align*}
        \Smbl (M) = \bigoplus_{k\geq 0} \Smbl_k \ ,
    \end{align*}
called the symbol algebra of $C^\infty(M)$, where $\Smbl_k = \Diff_k (M) / \Diff_{k-1} (M)$. 

We shall now give another description of the Poisson algebra $\Smbl (M)$. Consider the space $\mathcal{P}_k (T^*M)$ of functions $p \colon T^*M \to \Rbb$ such that $p \mapsto f(q,p)$ is, for every fixed $q \in M$, a degree $k$ homogeneous polynomial on $T^*_q M$. Take
    \begin{align*}
        \mathcal{P}(T^*M) : = \bigoplus_{k\geq 0} \mathcal{P}_k (T^*M) \subset C^\infty (T^*M) \ , 
    \end{align*}
which is the space of all fibre-wise polynoial functions on $T^*M$. Moreover, $ \mathcal{P}(T^*M) $ is a Poisson subalgebra in $C^\infty (T^*M)$ (seen as a Poisson algebra with the standard Poisson brackets). 

\begin{theorem}
Poisson algebras $\Smbl (M)$ and $ \mathcal{P}(T^*M) $ are isomorphic. 
\end{theorem}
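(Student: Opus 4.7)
The plan is to construct an explicit isomorphism $\Phi \colon \Smbl(M) \to \mathcal{P}(T^*M)$ degree by degree, via the \emph{principal symbol map}, and then verify it intertwines both the commutative product and the Poisson bracket. The only sensible ingredient is $\delta_f(\Delta) = [\Delta, f]$, which by definition lowers the order of $\Delta \in \Diff_k(M)$ by one and thus furnishes the multilinear data needed to read off a polynomial on each fibre of $T^*M$.

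First I would define, for $\Delta \in \Diff_k(M)$ and $(q,p) \in T^*M$, the scalar
\[
\sigma_k(\Delta)(q,p) := \frac{1}{k!} \, \Delta(\tilde f^{\,k})(q),
\]
where $\tilde f \in C^\infty(M)$ is any function with $\tilde f(q)=0$ and $\du \tilde f|_q = p$. A local computation in coordinates $(x^i)$ with $\tilde f = \sum_i p_i (x^i - q^i)$ shows that if $\Delta = \sum_{|\beta|\leq k} a_\beta(x)\,\partial^\beta$ then $\sigma_k(\Delta)(q,p) = \sum_{|\beta|=k} a_\beta(q)\,p^\beta$, which is manifestly a fibrewise homogeneous polynomial of degree $k$ and depends only on $\Delta$ modulo $\Diff_{k-1}(M)$ (the lower-order terms contribute zero because they involve a derivative of $\tilde f^{\,k}$ of order $< k$, evaluated at a point where $\tilde f$ has a zero of order $k$). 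Hence $\sigma_k$ descends to a well-defined map $\Phi_k \colon \Smbl_k(M) \to \mathcal{P}_k(T^*M)$, and I set $\Phi := \bigoplus_k \Phi_k$.

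Second I would check that $\Phi$ is a bijective morphism of graded commutative algebras. Injectivity is immediate: if $\sigma_k(\Delta) = 0$ then, in every chart, all top-order coefficients of $\Delta$ vanish and hence $\Delta \in \Diff_{k-1}(M)$. For surjectivity, a fibrewise polynomial $\sum_{|\alpha|=k} a_\alpha(x)\,p^\alpha$ is quantized locally to $\sum_{|\alpha|=k} a_\alpha(x)\,\partial^\alpha$; patching these local choices by a partition of unity of $M$ yields a global operator in $\Diff_k(M)$ whose class in $\Smbl_k(M)$ is independent of the patching modulo $\Diff_{k-1}(M)$. Multiplicativity $\Phi_{k+l}([\Delta \circ \nabla]) = \Phi_k([\Delta])\,\Phi_l([\nabla])$ follows because $\Delta \circ \nabla$ has top-order coefficients obtained by pointwise multiplication of those of $\Delta$ and $\nabla$, all other terms being absorbed by $\Diff_{k+l-1}(M)$ (here I use that $\Diff_*(M)$ is a filtered algebra, already seen in example \ref{Lie algebra structure on Diff}).

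The main obstacle, and the point requiring most care, is to verify that $\Phi$ is a morphism of Poisson algebras, i.e.\ that for $[\Delta]\in\Smbl_k(M)$ and $[\nabla]\in\Smbl_l(M)$ one has
\[
\Phi_{k+l-1}\bigl(\{[\Delta],[\nabla]\}\bigr) = \{\Phi_k([\Delta]), \Phi_l([\nabla])\}_{T^*M},
\]
where the left-hand bracket is $[\,[\Delta,\nabla]\,] \in \Smbl_{k+l-1}(M)$ and the right-hand bracket is the canonical Poisson bracket on $T^*M$ in Darboux coordinates $(q^i,p_i)$. It suffices to check this on monomial generators $\Delta = a(x)\,\partial^\alpha$ and $\nabla = b(x)\,\partial^\beta$. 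A direct expansion of $[a\,\partial^\alpha, b\,\partial^\beta]$ followed by discarding all terms of order $\leq k+l-2$ produces exactly
\[
\sum_{i} \Bigl( \frac{\partial \sigma_k}{\partial p_i}\frac{\partial \sigma_l}{\partial q^i} - \frac{\partial \sigma_k}{\partial q^i}\frac{\partial \sigma_l}{\partial p_i} \Bigr),
\]
which is the canonical Poisson bracket of the symbols. Extending by bilinearity and globalizing (the formula is coordinate-independent because both sides of the equality are intrinsically defined), one concludes that $\Phi$ is the desired Poisson isomorphism. As a byproduct, the degree-$1$ Poisson structure on $\Smbl(M)$ is transported to the standard degree-$1$ Poisson structure on $\mathcal{P}(T^*M)$ inherited from the symplectic form on $T^*M$.
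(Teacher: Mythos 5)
Your proof is correct, but it is organized differently from the paper's. The paper reduces everything to algebra generators: it identifies $\Smbl_0(M)$ with functions and $\Smbl_1(M)\cong\Gamma(M,TM)$ with fibrewise linear functions via $X\mapsto f_X(q,p)=\langle p,X_q\rangle$, checks the three bracket relations $f_{[X,Y]}=\{f_X,f_Y\}$, $\{f_X,f_F\}=f_{X(F)}$, $\{f_F,f_G\}=0$ on these generators, and then asserts that the isomorphism extends multiplicatively to all degrees, describing the symbol map $\sigma(\Delta)=\sum_{|\alpha|=k}a_\alpha(q)p^\alpha$ and the exact sequence $0\to\Diff_{k-1}(M)\to\Diff_k(M)\to\mathcal{P}_k(T^*M)\to 0$ only afterwards in coordinates. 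You instead build the full principal symbol $\sigma_k(\Delta)(q,p)=\frac{1}{k!}\Delta(\tilde f^{\,k})(q)$ in every degree at once (a nice intrinsic, jet-theoretic formula the paper does not give), and verify well-definedness, bijectivity, multiplicativity and bracket compatibility degree by degree in coordinates. What your route buys is that the compatibility with the Poisson bracket is checked in all degrees by a direct commutator expansion, rather than being inferred from the degree $\leq 1$ case plus the (unstated in the paper) fact that both algebras are generated in degrees $0$ and $1$ and that a bracket satisfying the Leibniz rule is determined by its values on generators; what the paper's route buys is brevity and a cleaner conceptual reason why checking three identities suffices. Both arguments share the same unproved background fact, namely that the algebraically defined $\Diff_k(M)$ consists locally of operators $\sum_{|\beta|\leq k}a_\beta\,\partial^\beta$, so your proposal introduces no gap that the paper does not also have.
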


\begin{proof}
It is enough to prove this isomorphism on generators of $\Smbl_0 (M)$ and $\Smbl_1 (M)$ on one side, and on $ \mathcal{P}_0 (T^*M) $ and $ \mathcal{P}_1(T^*M) $ on the other side. It is clear that $ \Smbl_0 (M) \cong \mathcal{P}_0 (T^*M) $ since both of them are equal to $C^\infty (T^*M)$. Further, in remark \ref{diff1 splits into functions and vector fields} we have seen that 
    \begin{align*}
        \Diff_1 (M) \cong \Diff_0 (M) \oplus \Gamma (M,TM) \ , 
    \end{align*}
which gives
    \begin{align*}
        \Smbl_1 (M) = \Diff_1 (M) / \Diff_0 (M) \cong \Gamma (M,TM) \ , 
    \end{align*}
Now consider a vector field $X \in \Gamma (M,TM)$ and define $f_X \in C^\infty(T^*M)$ by 
    \begin{align*}
        f_X (q,p) : = <p,X_q> \in \Rbb \ .
    \end{align*}
This gives us a bijection between $\Gamma (M,TM)$ and $ \mathcal{P}_1 (T^*M) $, since
    \begin{align*}
        < - ,X_q> \colon T^*_q M \to \Rbb
    \end{align*}
is a (homogeneous) $1$-st order polynomial function (for every fixed $q \in M$). Moreover, for all vector fields $X,Y \in \Gamma (M,TM)$
    \begin{align*}
        f_{[X,Y]} : = \{ f_X, f_Y \}  \ , 
    \end{align*}
so the bijection is an algebra homomorphism. For $F \in C^\infty(M)$, the function $f_F \in  C^\infty(T^*M)$ is defined by 
    \begin{align*}
        f_F (q,p) : = F(q) \ . 
    \end{align*}
Then the following identities hold for all $F,G \in C^\infty(M)$ and $ X \in \Gamma(M,TM)$
    \begin{align*}
        \{f_X, f_F \} & = f_{X(F)} \ , \\
        \{f_F, f_G \} & = 0 \ .
    \end{align*}

This isomorphism can be extended to isomorphsisms $ \Smbl_k (M) \cong \mathcal{P}_k (T^*M) $ for all $k$ and hence to isomorphism $ \Smbl (M) \cong \mathcal{P} (T^*M) $. In other words, for all $k \geq 1$, the following sequence is exact
    \begin{equation*}
        \begin{tikzcd}
        0 \arrow{r}{} & \Diff_{k-1} (M) \arrow{r}{\iota} & \Diff_k (M) \arrow{r}{\sigma} & \mathcal{P}_k (T^*M) \arrow{r}{}  & 0 \ ,
        \end{tikzcd}
    \end{equation*}    
where $\iota$ is the inclusion (given by the filtered structure of $\Diff_* (M)$) and $\sigma$ is the \emph{symbol map}. To each $\Delta \in \Diff_k (M)$, the symbol assigns a polynomial function $\sigma(\Delta) \colon T^*M \to \Rbb$, which is fiberwise homogeneous of degree $m$. We will describe it in local coordinates $(\Bar{q}, \Bar{p})$ of $T^*M$, induced by the local coordinates $(\Bar{q})$ on $M$. A differential operator $\Delta \in \Diff_k (M)$ has the form 
    \begin{align*}
        \Delta = \sum_{|\alpha| \leq m} a_\alpha \del^{\alpha} \ , 
    \end{align*}
where $\del^{\alpha} : = \del_{q_1}^{\alpha_1} \circ \ldots \circ  \del_{q_k}^{\alpha_k}, \alpha_i \in \Nbb$ and $a_\alpha \in C^\infty(M)$. Then, the symbol is 
    \begin{align*}
       \sigma( \Delta (q,p)) = \sum_{|\alpha| = m} a_{\alpha} (q) p^{\alpha}  \ , 
    \end{align*}
where $p^{\alpha} : = p_1^{\alpha_1} \dots p_k^{\alpha_k}$. Hence $\sigma( \Delta (q,p))$ is (for every $q$) a homogeneous polynomial function of degree $m$ in the variable $p$. 
\end{proof}

\begin{remark}
Let us describe briefly an \emph{invariant} form of the symbol $\sigma(\Delta)$. Let $F \in C^\infty(M)$. Then $e^{tF} \Delta e^{-tF}$ is a differential operator of order $\leq k$ if $\Delta \in \Diff_{k} (M)$. Consider the following formal expression
    \begin{align*}
        e^{tF} \Delta e^{-tF} = \Delta + t[F, \Delta] + \frac{t^2}{2} [F,[F, \Delta] ] + \ldots + \frac{t^k}{k!}[F, [F , \ldots , [F, \Delta] ] \ldots ] \ .
    \end{align*}
The above expression is finite, since the $i$-th term  has degree $\leq i - k$. Then the symbol map gives the coefficient of the leading term of this expression, seen as a polynomial in $t$
    \begin{align*}
        \sigma (\Delta) (q, d_q \phi) = \frac{1}{k!}[F, [F , \ldots , [F, \Delta] ] \ldots ] (q)  \ .
    \end{align*}
\end{remark} 

\begin{example}
Let $M = G$ be a Lie group with the Lie algebra $\g = \Lie (G)$. Then $\U_k (\g) \subset \Diff_k(G)$ as the left invariant differential operators of order $\leq k$ on $G$. 
\end{example}

\section{Poisson manifolds}

\subsection{Poisson structure on the cotangent bundle}

Let $F,G \in C^\infty (T^*M)$ be smooth functions on the cotangent bundle, and let $(q^i, p_i)$ be local coordinates on $T^*M$. Define the \emph{Poisson brackets} by
    \begin{align}\label{canonical Poisson brackets on the cotangent bundle}
        \{ F,G \} = \sum_{i = 1}^n (\frac{\del F}{\del p_i} \frac{\del G}{\del q^i} - \frac{\del F}{\del q^i} \frac{\del G}{\del p_i} )\ .
    \end{align} 
This is a bilinear mapping (linear in both arguments $F$ and $G$) which is \emph{skew-symmetric}
    \begin{align}\label{skew-symmetry of the Poisson bracket}
        \{ F,G \}  = - \{ G, F \}  \, 
    \end{align}
satisfies the \emph{Leibniz rule}
    \begin{align}\label{Leibniz rule of the Poisson bracket}
        \{ F,GH \} = \{ F,G \} H + G\{ F,H \}  \ , 
    \end{align}
and the \emph{Jacobi identity}
    \begin{align}\label{Jacobi identity of the Poisson bracket}
        \{ F, \{ G,H \} \} + \{ G, \{ H,F \} \} + \{ H, \{ F,G \} \} = 0 \ .
    \end{align}

\subsection{Poisson manifolds}

Generalizing the above idea of Poisson brackets on the cotangent bundle, we can give the following definition.

\begin{definition}
A Poisson bracket on a manifold $M$ is a bilinear mapping
    \begin{align*}
        \{ -,- \} \colon C^\infty (M) \times C^\infty (M) \to C^\infty (M) \ , 
    \end{align*}
such that \eqref{skew-symmetry of the Poisson bracket}, \eqref{Leibniz rule of the Poisson bracket}, and \eqref{Jacobi identity of the Poisson bracket} are satisfied. A manifold $M$ equipped with a~Poisson bracket is called a Poisson manifold.
\end{definition}

\begin{remark}
Every $F \in C^\infty (M)$, the mapping $X_F \colon C^\infty (M) \to C^\infty (M)$, defined via the Poisson bracket as
    \begin{align*}
        X_F : = \{ F, -\} \ , 
    \end{align*}
is a \emph{derivation} on the algebra $ C^\infty (M)$. More precisely, $X_F$ is a $1$st order differential operator, and hence can be considered as a vector field on $M$. This field is called \emph{a Hamiltonian vector field} with \emph{Hamiltonian} $F$. 
\end{remark}

\subsection{Hamiltonian mapping}

For every $F,G \in C^\infty (M)$, The Leibniz rule \eqref{Leibniz rule of the Poisson bracket} gives 
    \begin{align*}
        X_{FG} = F X_G  + G X_F 
    \end{align*}
and one can define a mapping 
    \begin{align*}
        \mathcal{H} \colon T^* M \to TM 
    \end{align*}
by $\mathcal{H} (dF) : = X_F $, or 
    \begin{align*}
        <dG, \mathcal{H} (dF)> : = \{ F,G\} \ ,
    \end{align*}
where the bracket on the left-hand side denotes the evaluation of the $1$-form $dG$ on the vector field $\mathcal{H} (dF)$. The map $\mathcal{H}$ is given in local coordinates $(x^i)$ on $M$ by a matrix 
    \begin{align*}
        H^{kl} : = \{ x^k , x^l \} \ .
    \end{align*}
Denoting $\del_k : = \frac{\del}{\del x^k}$, the Poisson brackets can then be written as
    \begin{align*}
        \{ F, G \} = \sum_{k,l}  H^{kl} \del_k F \del_l G \ , 
    \end{align*}
and the components of the vector field $X_F$ are
    \begin{align*}
        X_F^k = \sum_l H^{lk} \del_k F \ . 
    \end{align*}
Suppose the manifold is the cotangent bundle $T^*M$ and the local coordinates $(x^i)$ are $(q^i , p_i)$, $i = 1, \ldots, n$. Then, the Hamilton map is given by the matrix
    \begin{align*}
        H = \begin{pmatrix}
                0 & -I_n \\
                I_n & 0
            \end{pmatrix} \ , 
    \end{align*}
where $I_n$ is the identity matrix, $2n = \dim T^*M$. In coordinates, the skew-symmetry of the Poisson bracket reads as
    \begin{align*}
        H^{kl} = - H^{lk} \ , 
    \end{align*}
and the Jacobi identity is 
    \begin{align*}
        \sum_i (H^{ki} \del_i H^{lm} + H^{li} \del_i H^{mk} + H^{mi} \del_i H^{kl} ) = 0 \ . 
    \end{align*}
If the matrix $H = (H^{kl})$ is non-degenerate, the mapping $\mathcal{H} \colon T^*M \to TM$ is invertible. If we denote by $(\Omega_{kl})$ the inverse of $H$, then one can define a $2$-form
    \begin{align*}
        \underline{\Omega} = \frac{1}{2} \sum \Omega_{kl} dx^k \wedge dx^l \ , 
    \end{align*}
or in more intrinsic way
    \begin{align*}
         \underline{\Omega}_x (X,Y) : = <\mathcal{H}^{-1}_x (X), Y> \ , 
    \end{align*}
where $x \in M$ and $X,Y \in T_xM$. The Jacobi identity implies that   
    \begin{align*}
        \d \underline{\Omega} = - \frac{1}{6} \sum_{k,l,m} \Omega_{klm} dx^k \wedge dx^l \wedge dx^m \ , 
    \end{align*}    
where 
    \begin{align*}
        \Omega_{klm} : = \del_k \Omega_{lm} + \del_l \Omega_{mk} +  \del_m \Omega_{kl} \ . 
    \end{align*}

The above formula for $d \underline{\Omega}$ can be proved using the following expression for $\Omega_{klm}$
    \begin{align*}
        \Omega_{klm} : = \sum_{p,q,r} \Omega_{kp} \Omega_{lq} \Omega_{mr} \left( \sum_i \mathcal{H}^{pi} \del_i \mathcal{H}^{qr} + \mathcal{H}^{qi} \del_i \mathcal{H}^{rp} + \mathcal{H}^{ri} \del_i \mathcal{H}^{pq} \right) \ . 
    \end{align*}


\subsection{Poisson bracket on a symplectic manifold}

\begin{definition}
A smooth manifold $M$, equipped with a closed $2$-form $\Omega$ ($d\Omega = 0$) which is non-degenerate, i.e. if $X \in T_x U, x \in M$ s.t. 
        \begin{align*}
            \Omega(X,Y) = 0 \ 
        \end{align*}
is called a symplectic manifold. 
\end{definition}
For every symplectic manifold, one can define the Poisson bracket on the algebra $C^\infty(M)$ by 
    \begin{align*}
        \{F,G \} = <dG, \mathcal{H}(dF)> = \Omega \left( \mathcal{H}(dF), \mathcal{H}(dG) \right) \ . 
    \end{align*}
That is, each symplectic manifold is a Poisson manifold. Let us emphasize that the inverse statement is not true, i.e. there are Poisson manifolds which are not symplectic.


\subsection{Examples of Poisson and Symplectic manifolds}

\begin{example}[cotangent bundles] Let $M$ be a smooth manifold, $\dim M = n$. Then the cotangent bundle $T^*M$ has dimension (as a smooth manifold) $\dim T^*M = 2n$. The cotangent bundle comes equipped with the \emph{canonical} (or \emph{tautological}) \emph{Liouville $1$-form}, which is defined as follows. Let $\pi \colon TM \to M$, and $\pi_* \colon T^*M \to M$ denote the projections from the tangent and cotangent bundle, respectively, and by $d\pi_*$ the tangent map to $\pi_*$. Consider the following commutative diagram
    \begin{equation*}
        \begin{tikzcd}
        T (T^*M) \arrow[dashed]{r}{d \pi_*} \arrow[swap]{d}{\Tilde{\pi}} & TM \arrow{d}{\pi}  \\
        T^*M  \arrow[swap]{r}{\pi_*} & M
        \end{tikzcd}
    \end{equation*}
where $\Tilde{\pi}$ is the tangent bundle projection down to $T^*M$ (seen as manifold). Then the Liouville $1$-form $\rho \in \Omega^1(T^*M)$ is defined for arbitrary $X \in T(T^*M)$ as 
    \begin{align*}
        \rho (X) : = <\pi(X), d \pi_* (X)> \ .  
    \end{align*}
Then $ \Omega : = d \rho$ is a symplectic form on $T^*M$. In canonical coordinates, 
    \begin{align*}
        \Omega = d \rho = d( \sum_{i=1}^n p_i dq^i) = \sum_i dp_i dq^i \ .
    \end{align*}
Poisson brackets given by this symplectic structure are given by \eqref{canonical Poisson brackets on the cotangent bundle}. 
\end{example}

\begin{example}[Sphere and projective space]
Let $S^2 \subset \Rbb^3$ be the unit $2$-sphere, i.e. solution of the equation $x^2 + y^2 + z^2 = 1$. Then the $2$-form 
    \begin{align*}
        \Omega = x dy \wedge dz + y dz \wedge dx + z dx \wedge dy \ , 
    \end{align*}
where the values of $x,y,z$ are assumed to satisfy the equation for $S^2$, defines a~symplectic form on $S^2$. Note that one can vies the $2$-sphere can be identified with the complex projective line $S^2 = \mathbb{P}^1 (\Cbb)$). 

Consider now $\Cbb^n$ with the Hermitian form 
    \begin{align*}
        <z,w> = \sum_{k= 1}^n \Bar{z}^k w^k  \  .
    \end{align*}
Define a $1$-form 
    \begin{align*}
        \rho : = i <\Bar{z}, dz> = i \sum_{k= 1}^n \Bar{z}^k dz^k \ , 
    \end{align*}
where $i = \sqrt{-1}$, and the $2$-form
    \begin{align*}
        \Omega = d\rho = i d <\Bar{z},dz> = i\sum_{k= 1}^n d\Bar{z}^k \wedge dz^k \ .
    \end{align*}
The $2$-form $\Omega$ is real, i.e. it can be written as
        \begin{align*}
        \Omega = -2 \sum_{k= 1}^n dx^k \wedge dy^k \ ,
    \end{align*}
for $x^k + i y^k = z^k$. Hence $(\Cbb^n, \Omega)$ is a \emph{real symplectic} manifold. Poisson brackets are 
    \begin{align*}
        \{ z^j, z^k \} =  \{ \bar{z}^j, \bar{z}^k \} = 0 & & \{ z^j, \bar{z}^k \} = i\delta^{jk} \ . 
    \end{align*}

If we fix a quadratic Hamiltonian
    \begin{align*}
        H(z) : = <z,z> = \sum_{k = 1}^n |z^k|^2
    \end{align*}
on $\Cbb^n$, then the level set $H(z) = 1$ is a $(2n-1)$-dimensional sphere $S^{2n-1}$. The Hamiltonian vector field $X_H$ on $\Cbb^n$ restricted to $S^{2n-1}$ defines a vector field $pX_H$. Recall that $\operatorname{U}(1) \cong \Rbb^*$ acts on $S^{2n-1}$ by $(\varphi, z) \mapsto e^{-i\varphi}$ and the quotient by this action is the complex projective space, $S^{2n-1} / \Rbb^* \cong \mathbb{P}(\Cbb^n)$. The $2$-form $\Omega$ induces a $2$-form on $S^{2n-1}$, which is the lift of a unique $2$-form on $\mathbb{P}(\Cbb^n)$, making it a real symplectic manifold. More precisely
    \begin{align*}
        \Omega_{\mathbb{P}(\Cbb^n)} = \phi^* (\Omega|_{S^{2n-1}}) \ , 
    \end{align*}
where $\phi \colon \Cbb^n \setminus \{ 0\} \to S^{2n-1} $, $\phi(z) = |z|^{-1} z$. 
\end{example}

More details about Poisson manifolds can be found, for example, in \cite{Poisson-Structures}.


\subsection{Poisson manifolds and Lie theory}


\textbf{Dual space of a Lie algebra}. Let $\g$ be a finite dimensional Lie algebra over $\Rbb$, say $\dim \g = n$, and $\g^* = \hom (\g, \Rbb)$ the dual vector space. Consider the space of smooth functions $C^\infty (\g^*)$ and observe that one can embed $\g^* \to C^\infty (\g^*) $ as a subspace of linear functions
    \begin{align*}
        \g^* = \{ F_X \in C^\infty (\g^*) | \ F_X (\xi) = \xi(X), X \in \g , \xi \in \g^* \} \ .
    \end{align*}
There is a \emph{unique Poisson bracket} on $\g^*$ such that 
    \begin{align}\label{Poisson bracket on g^*}
        \{ F_X, F_Y \} = F_{[X,Y]} \ ,
    \end{align}
meaning that the assignment $X \mapsto F_X$ yields a Lie algebra homomorphism $\g \to C^\infty (\g^*)$.

\textbf{Description in coordinates - brackets}. Let $[X^k,X^l] = c^{kl}_m X^m$, where $X^k \in \g$ for all $k$. Then functions $\xi^1 = F_{X^1}, \ldots \xi^n = F_{X^n}$ form a system of linear coordinates on $\g^*$, and we can define a Poisson bracket on $\g^*$ via the Poisson brackets on $C^\infty (\g^*)$
    \begin{align*}
        \{ F,G\} = \sum_{k,l,m} C^{kl}_m \frac{\del F}{\del \xi^k} \frac{\del G}{\del \xi^l} \xi^m
    \end{align*}
so that 
    \begin{align*}
        \{ \xi^k, \xi^l\} = C^{kl}_m \xi^m \ .
    \end{align*}


\textbf{Description in coordinates - gradient}. For any $F \in C^\infty (\g^*)$, its \emph{gradient} 
    \begin{align*}
        \nabla F \colon \g^* \to \g 
    \end{align*}
is a $\g$-valued function on $\g^*$ defined by
    \begin{align*}
        <\nabla F (\xi), \eta > : = \frac{d}{dt}|_{t = 0} F(\xi + t \eta) \ , 
    \end{align*}
where $\xi, \eta \in \g^*$. In the coordinates
    \begin{align*}
         \nabla F = \sum_k  \frac{\del F}{\del \xi^k} X^k \ , 
    \end{align*}
which implies 
    \begin{align*}
        \{ F, G \} (\xi) = <\xi, [\nabla F (\xi), \nabla G (\xi)]> \ .
    \end{align*}

\textbf{G-invariant mappings on the $T^*G$}. Let $\g = \Lie (G) $ be a finite dimensional Lie algebra of a Lie group $G$. Every Lie group is parallelizable, i.e. there is a vector bundle isomorphism between $T^*G $ and $G \times \g^*$ as vector bundles over $G$. This is expressed by the following commutative diagram
                \begin{equation*}
                \begin{tikzcd}
                T^*G \arrow{r}{\varphi} \arrow[swap]{dr}{\pi} & G \times \g^* \arrow{d}{\pi_1}  \\
                    & G
                \end{tikzcd}
            \end{equation*}    
where $\varphi$ is the bundle isomorphism and $\pi_1 $ is the projections on the $1$st factor. We will denote by $\pi_2 \colon  G \times \g^* \to \g^*$ the projection on the second factor. Let $p \in T^*G$. Then 
    \begin{align}\label{local description of T^*G}
        \varphi (p) = (h, \xi) 
    \end{align}
for some $h \in G$ and $\xi \in \g^*$. Now consider the left translation automorphism $l_g \colon G \to G$, given by $l_g (h) = gh$. The tangent map of $l_g$ is denoted $(l_g)_* \colon TG \to TG$ and the cotangent map by $(l_g)^* \colon T^*G \to T^*G$. Notice that for $p \in T^*G$ we have
    \begin{align}\label{intermediate step - Lie theory}
        \left(\pi_2 \circ \varphi \circ (l_g)^* \right) (p) = \pi_2 (g^{-1}h, \xi) = \xi \ .
    \end{align}
This follows from the fact that $\g^*$ can be identified with the space of left-invariant $1$-forms on $G$, i.e. $1$-forms $\eta$ such that $ (l_g)^* \eta = \eta$. Thus from \eqref{local description of T^*G} and \eqref{intermediate step - Lie theory}, we have
    \begin{align}\label{intermediate step 2 - Lie theory}
        \pi_2 \circ \varphi \circ (l_g)^* =  \pi_2 \circ \varphi \ . 
    \end{align}
    
For the following considerations, we recall that any smooth map between manifolds, say $F \colon M \to N$, induces the corresponding algebra homomorphism in the opposite direction, $F^\bullet \colon C^\infty(N) \to C^\infty(M)$, which is given by the precomposition. Because $\g^*$ is a smooth manifold (as a vector space with global coordinates), we have the homomorphism
    \begin{align*}
        (\pi_2 \circ \varphi)^\bullet \colon C^\infty(\g^*) \to C^\infty(T^*G) \ .
    \end{align*}
We want to show that the algebra $C^\infty(\g^*)$ can be identified with the $G$-invariant subspace of the algebra $C^\infty(T^*G)^G$, i.e. 
   \begin{align*}
        C^\infty(\g^*) \cong C^\infty(T^*G)^G \ , 
    \end{align*}
where 
    \begin{align}\label{G invariance}
        C^\infty(T^*G)^G = \{ F \in C^\infty(T^*G)^G | \ \forall g \in G:  F \circ (l_g)^* = F \} \ . 
    \end{align}
To see this, consider $F \in C^\infty (\g^*)$ and denote 
    \begin{align*}
        \Tilde{F} : = (\pi_2 \circ \varphi)^\bullet F = F \circ \pi_2 \circ \varphi \in  C^\infty(T^*G) \ . 
    \end{align*}
Then $\Tilde{F}$ is $G$-invariant in the sense of \eqref{G invariance}. Indeed, using \eqref{intermediate step - Lie theory}, we obtain 
    \begin{align*}
         \Tilde{F} \circ (l_g)^* = F \circ \pi_2 \circ \varphi \circ (l_g)^* =  F \circ \pi_2 \circ \varphi =  \Tilde{F} \ ,
    \end{align*}
so we have an injection $C^\infty(\g^*) \xhookrightarrow{} C^\infty(T^*G)^G$. On the other hand, consider $\Tilde{H} \in C^\infty(T^*G)^G$. Then we can define $H \in C^\infty(\g^*) $
    \begin{align*}
        H (\xi) : =  \Tilde{H} \left( \varphi^{-1} ( e, \xi) \right) , \xi \in \g^* \ ,
    \end{align*}
where $\varphi$ is the trivialization of $T^*G$ as described in the above commutative diagram. The above definition of $H$ is unambiguous due to $G$-invariance of $ \Tilde{H}$. This assignment $\Tilde{H} \mapsto H$ is obviously injective, hence the bijection between $C^\infty(\g^*)$ and $C^\infty(T^*G)^G$. The brackets are preserved as well
    \begin{align*}
        \{F,H \} \circ \pi_2 = \{ F \circ \pi_2 , H \circ \pi_2\} 
    \end{align*}
for all $F,H \in C^\infty(\g^*)$. 

\begin{remark}
For a Lie group $G$, the cotangent bundle $T^*G$ is a symplectic manifold with non-degenerate Poisson structure (since $G$ is a smooth manifold).
\end{remark}


\begin{remark}
$\g^*$ does not have a structure of a Lie algebra, althoug it is a~dual vector space to the Lie algebra $\g$, it is not a coalgebra. On the other hand, $C^\infty(\g^*)$ is a Lie algebra.
\end{remark}

\textbf{One more example of a symplectic manifold}. Let $G$ be a Lie group,  $\g$ its Lie algebra, and $\g^*$ the corresponding dual vector space. The group $G$ acts on $\g$ by the \emph{adjoint action} $\Ad \colon G \to \operatorname{Aut} (\g)$. At $g \in G$, the map 
    \begin{align*}
        \Ad_g \colon \g \to \g
    \end{align*}
is the derivative at the identity element $e \in G$ of the conjugation map $G \to G$, given by $h \mapsto g h g^{-1}$. One can differentiate $\Ad$ at $e$, to obtain the \emph{adjoint action of $\g$ on $\g$}, i.e the map
    \begin{align*}
        \ad \colon \g \to \End (\g) \ .
    \end{align*}
In fact, one can show that $\ad \colon \g \to \D (\g) $, where $\D (\g)$ is the algebra of derivations on $\g$, which is the Lie algebra of $\operatorname{Aut}(\g)$. For $X,Y \in \g$, the $\ad$ map acts as the Lie bracket
    \begin{align*}
        \ad_X (Y) = [X,Y] \ .
    \end{align*}
Using $\Ad$, one can define the \emph{coadjoint action of $G$ on $\g^*$}, denoted  $\Ad^* \colon G \to \Aut (\g^*)$, by 
    \begin{align*}
        \Ad^*_g (\xi) (X) = \xi (\Ad_{g^{-1}} (X)), 
    \end{align*}
where $\xi \in \g^*$. Finally, using $\Ad^*$, we can define the \emph{coadjoint action of $\g$ on $\g^*$}, denoted $\ad^* \colon \g \to \End (\g^*)$, as
    \begin{align*}
       (\ad^*_X (\xi) (Y) ) : = - \xi ([X,Y]) \ ,  
    \end{align*}
$X,Y \in \g$ and $\xi \in \g^*$.


\subsection{Symplectic foliation on $\g^*$}

The brackets on $\g^*$ are non-symplectic. 

\textbf{Symplectic structure on the coadjoint orbit}. Now suppose that $G$ is a~connected Lie group and denote by $\mathcal{O} \subset \g^*$ the $G$-orbit of the coadjoint action $\Ad^*$. Then for all $\xi \in \mathcal{O}$, the tangent spaces of $\mathcal{O}$ at $\xi$ is given by 
    \begin{align*}
        T_\xi \mathcal{O} = \{ \ad^*_X (\xi) \in \g^* | X \in \g \} \ . 
    \end{align*}
One can define a symplectic structure on $\mathcal{O}$:
    \begin{align*}
        \Omega_{\xi} \colon T_\xi \mathcal{O} \times T_\xi \mathcal{O} \to \Rbb \ ,
    \end{align*}
as follows. For arbitrary $(X, \xi), (Y, \xi) \in  T_\xi \mathcal{O}$, define
    \begin{align}\label{symplectic structure on the coadjoint orbit}
         \Omega_{\xi} ((X, \xi), (Y, \xi)) : = \xi (\ad_X(Y)) = \xi([X,Y]) \ .
    \end{align}
This structure is usually atributed to Kostant \cite{KostantBertram}, Kirillov \cite{Kirillov1968, Kirillov2004}, and Souriau \cite{Souriau1976}. For brevity, we will refere to this symplectic structure as KKS. 

\subsubsection{Coadjoint invariant functions}

A function $F \in C^\infty (g^*)$ such that  
    \begin{align*}
        \{ F,H \} = 0 \ \forall H \in C^\infty (g^*) 
    \end{align*}
is called a \emph{Casimir function}. Let $F$ be a Casimir function. Then for arbitrary $X \in \g$, the corresponding linear function\footnote{This is the evaluation map $F_X \colon \g^* \to \Rbb$ (defined above), $F_X (\xi) = \xi (X)$.} $F_X$ satisfy 
    \begin{align*}
        \{F, F_X\} = - \{ F_X , F\} = - \xi_{F_X} (F) = \ad^*_X(F) = 0\ , 
    \end{align*}
where $\xi_{F_X} = \{ F_X, - \}$ is the Hamiltonian function corresponding to $F_X$. This means that the Casimir functions are \emph{coadjoint-invariant functions}.

\begin{remark}
If $G$ is a semisimple Lie group, then there is an open denset subset $U \subset \g^*$, stable under $G$, and the orbits are separated by Casimir functions. This yields a foliation of $\g^*$. Let us note that this foliation is not a smooth fibration and has a rather complicated structure.
\end{remark}    


\textbf{Poisson bracket on $\g^*$ and $\mathcal{O}$}. There is a link between the Poisson bracket on $\g^*$ \eqref{Poisson bracket on g^*} and on the coadjoint orbit $\mathcal{O}$. Denote by $\iota \colon \mathcal{O} \to \g^*$ the embedding of the orbit into $\g^*$, and by $\iota^* \colon C^\infty(\g^*) \to C^\infty(\mathcal{O})$ the corresponding algebra map. Then we have
    \begin{align*}
        \iota^*\{ f, g \}_{ C^\infty(\g^*)}  = \{ \iota^*f ,\iota^* g \}_{C^\infty(\mathcal{O} )} \ .
    \end{align*}
The left-hand side is the Poisson bracket on $C^\infty(\g^*)$, restricted to $\mathcal{O}$. On the right-hand side is the \emph{symplectic Poisson bracket} on $C^\infty(\mathcal{O} )$, given by the symplectic structure $\Omega$ on $\mathcal{O}$, given by \eqref{symplectic structure on the coadjoint orbit}. We will now describe two examples of this construction. 

\begin{example}
Let $G = \SO(3)$. The Lie algebra is $\g = \so (3)$ and it is isomorphic with its dual $\so (3) \cong \so^* (3)$ via the Killing form. Moreover, $\so^* (3) \cong \Rbb^3$. The orbits of the coadjoint action are described by the equation $x^2 + y^2 + z^2 = R^2$. Hence for $R = 0$ we have a $0$-dimensional symplectic leaf (a  e point). For $R > 0$ we have $2$-dimensional symplectic leaves (the coadjoint orbits). On $\Rbb^3$ we have the volume form $dx \wedge dy \wedge dz$, which if restricted to the orbits, yields
    \begin{align*}
        \Omega_R = \frac{1}{R^2}(x dy \wedge dz + y dz \wedge dx + z dx \wedge dy )
    \end{align*}
\end{example}
The Casimir functions on $\so(R)$ are given by $F = x^2 + y^2 + z^2$. Hence the Casimir elements are parametrized by $R > 0$. 

\begin{example}
Consider $G = \Sl_2 (\Rbb)$, given by 
    \begin{align*}
        \Sl_2 (\Rbb) = \{ A \in \Mat_2 (\Rbb) | \ \det A = 1 \} \ , 
    \end{align*}
The Lie algebra is 
    \begin{align*}
        \sl_2(\Rbb) = \{ A \in  \Mat_2 (\Rbb) | \tr A = 0 \}  \cong \Rbb^3 \ . 
    \end{align*}
The $\Sl_2 (\Rbb)$-orbits in $\Rbb^3$ are cones and hyperboloids, with the symplectic structure given by KKS structure. 
\end{example}

\begin{remark}
For a finite dimensional Lie algebra $\g$ and the corresponding linear extension of the bracket to the symmetric algebra $S(\g)$, the Casimir elements can be identified with the $\g$-invariant subspace of $S(\g)$
    \begin{align*}
        \Cas (\g) : = \operatorname{Z}(\U (\g)) \cong S(\g)^{\g}
    \end{align*}
 where $\operatorname{Z}(\U (\g))$ denotes the center of the universal envelopping algebra of $\g$ (see \ref{UP of the universal enveloping algebra}). For a reductive $\g$ 
    \begin{align*}
        HP^k \left(S(\g) \right) \cong H_{CE}^k (\g) \otimes_{\Kbb} \Cas (\g) \ , 
    \end{align*}
where $H_{CE}$ is the Chevalley-Eilenberg cohomology. 
\end{remark}

\section{Differential Calculus on Poisson Manifolds}

Let $M$ be a smooth manifold. Then there is a natural isomorphism $P_m \cong \Gamma (M, S^m (TM))$, where $S^m  (TM)$ is the $m$-symmetric power of the tangent bundle $TM$. Then the Poisson brackets on $C^\infty(T^*M)$ can be reformulated as Poisson brackets on symmetric tensor fields.

We also consider \emph{m-vector fields} over $M$ \cite{NRoy-Properties-of-Multisympl-mfld}, i.e. sections of the $m$-th exterior bundle
    \begin{align*}
        \Lambda^m (TM) \to M \ , 
    \end{align*}
The set of all $m$-vector fields is denoted $\mathfrak{X}^m(M) : = C^\infty(M,\Lambda^m (TM))$. For $m = 1$ we obtain the standard notion of vector fields, i.e. $\mathfrak{X}^1(M) = C^\infty(M, TM)$. Taking the \emph{Whitney sum} of all the exterior powers of $TM$ gives the exterior bundle 
    \begin{align*}
        \Lambda(TM) : = \oplus_{m \geq 0} \to M \ .
    \end{align*}
Ssections of this bundle are \emph{multivector fields}. The set of all multivector fields, $\mathfrak{X}(M): = C^\infty(M,\Lambda (TM))$, is an algebra with respect to the exterior product \eqref{exterior product}. There is a dual construction leading to differential forms over $M$. If we start with $T^*M$ instead of $TM$, we get \emph{differential $m$-forms over $M$} (shortly just \emph{$m$-forms}) as sections of the bundle 
    \begin{align*}
        \Lambda^m (T^*M) \to M \ .
    \end{align*}
The set of all differential $m$-forms is denoted $ \Omega^m (M) : =  C^\infty(M,\Lambda^m (T^*M))$. Note that $m$-vector fields are skew-symmetric polylinear functions on $1$-forms. Taking sections of the bundle of all $m$-forms, $0 \leq m \leq \dim M$, 
    \begin{align*}
        \Lambda (T^*M) : = \oplus_m \Lambda^m (T^*M) \to M \ ,
    \end{align*}
we obtain the algebra (with respect to the exterior product) of differential forms over $M$ 
    \begin{align*}
        \Omega (M) =  \oplus_{m \geq 0} \Omega^m (M) \ ,  
    \end{align*}

\begin{remark}
Both algebras $\mathfrak{X}(M)$, and $\Omega(M)$, are $C^\infty$-modules.
\end{remark}
  
Now suppose that $(M, \{ \cdot , \cdot \} )$ is a Poisson manifold, $\mathcal{H} \colon T^*M \to TM$ the corresponding Hamiltonian mapping. One can consider a bivector field $\pi_* \in \mathfrak{X}^2(M)$, such that 
    \begin{align*}
        \{ F,G\} : = \sum_{k,l} H^{kl} \del_k F \del_l G \ .
    \end{align*}
We would like to see the above studied Jacobi identity in a more invariant, intrinsic way using certain structure on the space of multivector fields $\mathfrak{X}(M)$. This structure is called a \emph{Lie super-Schouten bracket}, or \emph{Schouten-Nijenhuis bracket}, or shortly just \emph{Schouten bracket},
    \begin{align*}
        [\![- , - ]\!] \colon \mathfrak{X}^p(M) \times \mathfrak{X}^q(M) \to \mathfrak{X}^{p+q-1} (M) \ .
    \end{align*}
We define it inductively using the following properties of multivector fields
    \begin{enumerate}
        \item $s \wedge t = (-1)^{|s||t|} t \wedge s$, where $|s|$ is the degree\footnote{meaning $s \in \mathfrak{X}^{|s|} (M)$} of $s$,
        \item $ [\![s , t]\!] = (-1)^{|s|(|t|+1)+|t|}  [\![t , s]\!] $ (graded commutativity),
        \item $ [\![s , t \wedge r ]\!] =  [\![s , t]\!] \wedge r + (-1)^{|t|( |s| - 1 )} t \wedge  [\![s , r]\!]$,
        \item $ [\![s \wedge t,  r ]\!] =  s \wedge [\![ t,  r ]\!] + (-1)^{|t|( |r| - 1 )} [\![s , r]\!] \wedge t $. 
    \end{enumerate}
For degrees $\leq 1$ we further define
    \begin{enumerate}
        \item $|s| = |t| = 1 : [\![s , t]\!] : = [s,t] $, where $[-,-]$ is the Lie bracket
        \item $|s| =1, |t| = 0 : [\![s , t]\!] : = s(t) = \mathcal{L}_s(t) $, where $\mathcal{L}$ is the Lie derivative
        \item $|s| = |t| = 0 : [\![s , t]\!] : = 0 $. 
    \end{enumerate}
Multivector fields $s$ and $t$ are called decomposable if
    \begin{align*}
        s = s_1 \wedge \ldots \wedge s_p && t = t_1 \wedge \ldots \wedge t_q \ ,
    \end{align*}
for some $p,q$, and where all $s_i, t_j \in  \mathfrak{X}^1 (M)$. In this case 
    \begin{align*}
         [\![s , t]\!] = \sum_{k=1}^p \sum_{l=1}^q  [s_k, t_l] \wedge s_1 \wedge \ldots \wedge \hat{s_k} \wedge \ldots \wedge s_p \wedge t_1 \wedge \ldots \wedge \hat{t_l} \wedge \ldots \wedge t_q \ .
    \end{align*}

There is at most one Schouten bracket satisfying the above properties.


Once we properly defined the space of multivectors on a manifold and the notion of Schouten bracket, we can introduce the notion of Poisson tensor.  
\begin{definition}[Poisson tensor]\label{Poisson tensor}
Let $M$ be a Poisson manifold with Poisson bracket $\{-,- \}$ and with the Schouten bracket $ [\![- , -]\!]$. A $2$-vector field $\pi \in \mathfrak{X}(M)$ is called a Poisson tensor, if for all $f,g \in C^\infty(M)$
    \begin{align*}
        \{f,g \} = [\![ [\![\pi , f ]\!], g  ]\!] \ .
    \end{align*}
\end{definition}

\begin{theorem}
The Poisson bracket $\{ -,-\}$ satisfies the Jacobi identity iff $[\![ \pi , \pi]\!] = 0$. 
\end{theorem}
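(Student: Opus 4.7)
The plan is to translate both the Jacobi identity for $\{-,-\}$ and the condition $[\![\pi,\pi]\!]=0$ into statements about the trivector field $[\![\pi,\pi]\!]\in\mathfrak{X}^3(M)$ evaluated on triples of exact one-forms, and then exhibit the classical identity
$$[\![\pi,\pi]\!](df,dg,dh) \;=\; 2\bigl(\{f,\{g,h\}\}+\{g,\{h,f\}\}+\{h,\{f,g\}\}\bigr),$$
from which the equivalence is immediate: the right-hand side is the Jacobiator, and exact one-forms span $T^{*}_xM$ at every point $x\in M$.

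First I would unpack Definition \ref{Poisson tensor}. The element $[\![\pi,f]\!]$ has degree $2+0-1=1$ and is therefore a vector field; by graded commutativity it coincides (up to sign) with the Hamiltonian field of $f$, so that $\{f,g\} = [\![[\![\pi,f]\!],g]\!]$ just reads $\{f,g\}=X_f(g)$ with $X_f:=[\![\pi,f]\!]$. Next I would derive the graded Jacobi identity for the Schouten bracket,
$$[\![s,[\![t,r]\!]]\!] \;=\; [\![[\![s,t]\!],r]\!] + (-1)^{(|s|-1)(|t|-1)}\,[\![t,[\![s,r]\!]]\!],$$
which is forced by the four axioms listed above for $[\![-,-]\!]$. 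Applied to $s=t=\pi$ and $r=f$, it collapses to the relation $2\,[\![\pi,[\![\pi,f]\!]]\!]=[\![[\![\pi,\pi]\!],f]\!]$. Iterating this step, taking Schouten brackets with $g$ and then $h$ and using the graded Leibniz rules (axioms 3 and 4) to turn every remaining occurrence of $[\![\pi,\cdot]\!]$ into a Poisson bracket, I would arrive at the displayed formula above.

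Finally, the two implications come for free. If $[\![\pi,\pi]\!]=0$, the right-hand side of the displayed identity vanishes for all choices of $f,g,h\in C^{\infty}(M)$, giving the Jacobi identity. Conversely, if the Jacobi identity holds, then the trivector $[\![\pi,\pi]\!]$ annihilates every triple $(df,dg,dh)$; since the differentials of smooth functions span $T^{*}_xM$ at each $x$, this forces $[\![\pi,\pi]\!]\equiv 0$. The step I expect to be the main obstacle is the sign-bookkeeping needed to extract the displayed identity cleanly from the abstract axioms. A safe fallback is to verify the identity directly in local coordinates: writing $\pi=\tfrac{1}{2}\pi^{ij}\partial_i\wedge\partial_j$, expansion of $[\![\pi,\pi]\!]$ via the graded Leibniz rules produces (three times) the cyclic sum $\sum_l\bigl(\pi^{il}\partial_l\pi^{jk}+\pi^{jl}\partial_l\pi^{ki}+\pi^{kl}\partial_l\pi^{ij}\bigr)$, which is precisely the local Jacobi condition already recorded in Section 2.
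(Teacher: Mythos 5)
The paper states this theorem without proof (the closely related Proposition on $[\![\pi,\pi]\!]=0$ earlier in the text is likewise left as an exercise), so there is no in-paper argument to compare against; judged on its own, your proposal is correct and is the standard proof. The chain of reductions is sound: from Definition \ref{Poisson tensor}, $\{f,g\}=X_f(g)$ with $X_f=[\![\pi,f]\!]$; the graded Jacobi identity with $s=t=\pi$, $r=f$ gives $2[\![\pi,[\![\pi,f]\!]]\!]=[\![[\![\pi,\pi]\!],f]\!]$; iterating against $g$ and $h$ produces $[\![\pi,\pi]\!](df,dg,dh)$ as a nonzero multiple of the Jacobiator; and since differentials of functions span each cotangent space, a trivector vanishing on all triples of exact one-forms vanishes identically. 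Two small caveats. First, the graded Jacobi identity is not among the four axioms you quote --- in the paper it appears only in the next subsection as an additional stated (unproved) property of the Schouten bracket --- so saying it is ``forced by'' the four axioms glosses over a genuine computation; either cite it as a known property of the (unique) Schouten bracket or fall back on coordinates, as you propose. Second, the overall sign and the factor $2$ in your displayed identity depend on normalization conventions for evaluating a trivector on one-forms; this is harmless for the equivalence but worth flagging if you write the identity down explicitly. Your local-coordinate fallback, reducing $[\![\pi,\pi]\!]=0$ to the cyclic condition $\sum_l\bigl(\pi^{il}\partial_l\pi^{jk}+\pi^{jl}\partial_l\pi^{ki}+\pi^{kl}\partial_l\pi^{ij}\bigr)=0$ already recorded in Section 2 of the paper, is the cleanest way to make the whole argument self-contained.
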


\subsection{Coordinate-free construction of the Schouten bracket}

Consider the exterior algebra $\Omega(M)$. Let $\omega \in \Omega^p(M)$ and $X \in \mathfrak{X}^1(M)$. We associate with $X$ two operators on $\Omega(M)$
    \begin{align*}
        \iota_X & \colon \Omega^p(M) \to \Omega^{p-1}(M) &&  \text{(interior product)}  \ , \\
        \mathcal{L}_X &\colon \Omega^p(M) \to \Omega^p(M) && \text{(Lie derivative)}  \ . 
    \end{align*}
The interior product is defined by 
    \begin{align*}
         (\iota_X \omega) (X_2, \ldots, X_p) : = \omega(X_1, X_2, \ldots, X_p)\ ,
    \end{align*}
where all $X_i \in \mathfrak{X}^1(M)$. The Lie derivative is defined as
    \begin{align*}
        \mathcal{L}_X \omega : = \frac{d}{dt}|_{t = 0} \phi^*_t(\omega) \ ,
    \end{align*}
where $\phi^*_t$ denotes the pullback along the flow $\phi_t$ of $X$. This means 
    \begin{align*}
        (\mathcal{L}_X \omega) (X_1, \ldots, X_p ) =  \mathcal{L}_X (\omega (X_1, \ldots, X_p ) ) - \sum_k \omega(X_1, \ldots, [X,X_k], \ldots, X_p) \ . 
    \end{align*}
We denote by $d$ the de Rham differential $d \colon \Omega^p(M) \to \Omega^{p+1}(M) $, given by 
    \begin{align*}
        d \omega (X_1, \ldots, X_{p+1}) & = \sum_k \mathcal{L}_{X_k}   (\omega (X_1, \ldots,\hat{X_k} , \ldots, X_{p+1} ) ) \\
            & + \sum_{k < l} (-1)^{k+l} \omega( [X_k, X_l], X_1, \ldots,\hat{X_k} , \ldots, \hat{X_l} , \ldots, X_{p+1} ) \ . 
    \end{align*}
    
\begin{theorem}[Cartan triple $(\iota_X, \mathcal{L}_X, d)$]\label{Cartan triple}
The following identities are always satisfied
    \begin{enumerate}
        \item $ \iota_X \circ \iota_Y + \iota_Y \circ \iota_X = 0 $,
        \item $[\mathcal{L}_X, \mathcal{L}_Y] = \mathcal{L}_{[X,Y]}$, 
        \item $[\mathcal{L}_X,\iota_Y] = \iota_{[X,Y]}$,
        \item $[\mathcal{L}_X, d] = 0$,
        \item $d^2 = d \circ d = 0$,
        \item $\mathcal{L}_X = \iota_x \circ d + d \circ \iota_X$.
    \end{enumerate}
\end{theorem}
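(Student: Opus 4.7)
The plan is to establish the six identities in an order that lets each subsequent one build on the previous. Items (1) and (5) are algebraic and follow immediately from the defining formulas, so I would dispose of them first. For (1), expanding $(\iota_X\iota_Y\omega)(X_3,\dots,X_p) = \omega(Y,X,X_3,\dots,X_p)$ and using the antisymmetry of $\omega$ in its first two slots gives the opposite of $(\iota_Y\iota_X\omega)(X_3,\dots,X_p)$. For (5), one checks $d^2=0$ on decomposable forms $f\,dg_1\wedge\cdots\wedge dg_p$ using the explicit Koszul-type formula for $d$; alternatively, $d^2$ is a degree-2 derivation vanishing on $C^\infty(M)$ and on $df$, hence vanishing on everything.

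Next I would prove (3) by direct substitution, since it is a purely local identity on forms. Evaluating $\mathcal{L}_X(\iota_Y\omega)$ and $\iota_Y(\mathcal{L}_X\omega)$ via the formula
\[
(\mathcal{L}_X\omega)(Y_1,\dots,Y_p) = \mathcal{L}_X\bigl(\omega(Y_1,\dots,Y_p)\bigr) - \sum_{k}\omega(Y_1,\dots,[X,Y_k],\dots,Y_p),
\]
the derivation-type terms cancel in the difference and one is left with precisely $\omega([X,Y],X_2,\dots,X_p) = (\iota_{[X,Y]}\omega)(X_2,\dots,X_p)$. Identity (2) I would obtain from the functoriality of pullback: if $\phi_t^X, \phi_t^Y$ denote the flows, then $\mathcal{L}_X\mathcal{L}_Y-\mathcal{L}_Y\mathcal{L}_X$ acts on $\omega$ as the second mixed partial at $(t,s)=(0,0)$ of $(\phi_t^X)^*(\phi_s^Y)^*\omega - (\phi_s^Y)^*(\phi_t^X)^*\omega$, which equals $\mathcal{L}_{[X,Y]}\omega$ because the commutator of the flows is generated by $[X,Y]$; this argument applies to $\omega\in\Omega^0$ first and then extends to all degrees by the Leibniz property of $\mathcal{L}$.

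The main work lies in (6), Cartan's magic formula, which I regard as the key obstacle. My strategy is to show that both $\mathcal{L}_X$ and $D_X := \iota_X\circ d + d\circ \iota_X$ are $\mathbb{R}$-linear degree-$0$ derivations of the graded algebra $\Omega(M)$, and that they agree on the generators. Degree-$0$ derivation property for $\mathcal{L}_X$ is immediate from the definition via pullback. For $D_X$, a short computation using the graded Leibniz rules satisfied by $\iota_X$ (degree $-1$, odd) and $d$ (degree $+1$, odd) shows that the anticommutator $\{\iota_X,d\}$ is even and satisfies the ordinary Leibniz rule. On $f\in\Omega^0(M)=C^\infty(M)$ we have $\iota_X f = 0$ and $\iota_X\,df = X(f) = \mathcal{L}_X f$, so $D_X f = \mathcal{L}_X f$. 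On an exact $1$-form $df$, using (5) we get $D_X(df) = d\iota_X(df) = d(X(f)) = d(\mathcal{L}_X f) = \mathcal{L}_X(df)$, the last equality being the special case of (4) on functions (which I would verify directly at this point: $\mathcal{L}_X\circ d = d\circ \mathcal{L}_X$ on $C^\infty(M)$ is tautological). Since locally $\Omega(M)$ is generated as a $C^\infty$-algebra by functions and their differentials, the two derivations coincide.

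Finally, (4) follows as a corollary of (5) and (6): using $\mathcal{L}_X = \iota_X d + d\iota_X$,
\[
[\mathcal{L}_X,d] = (\iota_X d+d\iota_X)d - d(\iota_X d+d\iota_X) = \iota_X d^2 - d^2\iota_X = 0.
\]
The order (1), (5), (3), (2), (6), (4) makes the argument linear and highlights Cartan's magic formula as the single nontrivial step; everything else is either combinatorial antisymmetry or an immediate consequence.
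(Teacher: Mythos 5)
The paper states this theorem without any proof, so there is nothing to compare your argument against; judged on its own, your proposal is correct and is the standard textbook derivation (pointwise antisymmetry for (1) and (5), direct evaluation against the defining formula for (3), flows for (2), the ``two derivations agreeing on generators'' argument for Cartan's magic formula (6), and then (4) as a formal consequence). The one presentational wrinkle is in the ordering: to extend (2) from $\Omega^0(M)$ to all degrees via the Leibniz property you must also check that the two degree-$0$ derivations $[\mathcal{L}_X,\mathcal{L}_Y]$ and $\mathcal{L}_{[X,Y]}$ agree on exact $1$-forms $df$, and that step uses $\mathcal{L}_X(dg)=d(Xg)$ --- precisely the function-level instance of (4) that you only introduce later, inside the proof of (6). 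Since that identity is immediate from $\phi_t^*\circ d = d\circ\phi_t^*$, this is not a gap, but you should state it before invoking the extension in (2) rather than after. With that reordering the argument is complete.
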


We want now to define the analogue of this "differential calculus" for multivectors. For decomposable $t \in \mathfrak{X}^p (M), t = X_1 \wedge \ldots \wedge X_p$, we define     
    \begin{align*}
        \iota_t = \iota_{X_1} \circ \ldots \circ \iota_{X_p} \ . 
    \end{align*}
The above can be extend by linearity for arbitrary $t \in \mathfrak{X}^p (M)$. This definition is correct because of the first property of the above theorem and yields an operator
    \begin{align*}
        \iota_t \colon \Omega^m (M) \to \Omega^{m-p} (M) \ ,
    \end{align*}
which further satisfies 
    \begin{align*}
        \iota_{t \wedge s} = \iota_t \circ \iota_s \ . 
    \end{align*}
    
To define the Lie derivative $\mathcal{L}_t$ along a multivector field, one can start with the notion of a \emph{graded operator} 
    \begin{align*}
        \del \colon \Omega^m (M) \to \Omega^{m+r} (M) .
    \end{align*}
In this case, $\del$ is called \emph{a graded operator of degree} $\deg \del = |\del| = r$. If in addition $\del$ satisfies 
    \begin{align*}
        \del (\omega \wedge \theta) = \del \omega \wedge \theta + (-1)^{mr} \omega \wedge \del \theta,
    \end{align*}
where $\omega \in \Omega^m (M)$, then $\del$ is called a \emph{graded derivative of degree $r$}. For example, $\iota_x, \mathcal{L}_x, d$ are graded derivatives of the following degrees $|\iota_x| = -1, |\mathcal{L}_x| = 0, |d| = 1$. 

We recall that \emph{graded brackets} of two operators are defined by 
    \begin{align*}
        [\Delta, \nabla] = \Delta \circ \nabla - (-1)^{|\Delta | |\nabla|} \nabla \circ \Delta \ . 
    \end{align*}

\begin{remark}
If $\del$ and $D$ are graded operators of degrees $|\del|$ and $|D|$, respectively, then  $[\del, D]$ is a graded derivation of degree $|\del| + |D|$.
\end{remark}

The property 6. of the previous theorem now reads 
    \begin{align*}
         [\iota_X,d] = \iota_X \circ  d + d \circ \iota_X = \mathcal{L}_X 
    \end{align*}
for a vector field $X$. Using the graded bracket we now define the Lie derivative along a multivector field $t$ 
    \begin{align*}
        \mathcal{L}_t : = [\iota_t, d] \ . 
    \end{align*}
If $t \in \mathfrak{X}^p(M)$ then 
    \begin{align*}
        \mathcal{L}_t \colon  \Omega^m (M) \to \Omega^{p-m+1} (M) \ ,
    \end{align*}
meaning that $ \mathcal{L}_t$ is a graded derivation of degree $| \mathcal{L}_t| = m-1$. 

We can proceed with the definition of a bracket on the algebra of multivectors (denoted by the same symbol as the Schouten bracket)
    \begin{align*}
         [\![- , - ]\!] \colon \mathfrak{X}^p (M) \times \mathfrak{X}^q(M) \to \mathfrak{X}^{p+q-1} (M) \ .
    \end{align*}
If $s \in \mathfrak{X}^p (M), t \in \mathfrak{X}^q(M)$, then there is a unique multivector field $[\![t , s ]\!] \in \mathfrak{X}^{p+q-1} (M)$ satisfying 
    \begin{align*}
        \iota_{[\![s , t ]\!]} = [\mathcal{L}_s, \iota_t] \ ,
    \end{align*}
where the right-hand side is given by the graded commutator, i.e.
    \begin{align*}
        [\mathcal{L}_s, \iota_t] = \mathcal{L}_s \circ \iota_t - (-1)^{(p-1)q} \iota_t \circ  \mathcal{L}_s \ . 
    \end{align*}
When the multivectors are decomposable as $s = s_1 \wedge \ldots \wedge s_p$ and $ t = t_1 \wedge \ldots \wedge t_q $ then 
    \begin{align*}
         [\![s , t]\!] = \sum_{k=1}^p \sum_{l=1}^q  [s_k, t_l] \wedge s_1 \wedge \ldots \wedge \hat{s_k} \wedge \ldots \wedge s_p \wedge t_1 \wedge \ldots \wedge \hat{t_l} \wedge \ldots \wedge t_q \ .
    \end{align*}
This defines $ [\![- , - ]\!]$ on the whole $\mathfrak{X}(M)$ by bilinear extension. Moreover, it satisfies the \emph{graded commutativity}
    \begin{align*}
        [\![s , t]\!] = -(-1)^{(|s|-1)(|t|-1)} [\![t , s]\!] \ , 
    \end{align*}
as well as the analogy of the second property of the \ref{Cartan triple} theorem
    \begin{align*}
         [\mathcal{L}_s, \mathcal{L}_t] = \mathcal{L}_{[\![s , t]\!]} \ ,
    \end{align*}
and also satisfies the \emph{graded Jacobi identity}
    \begin{align*}
        (-1)^{(|s|-1)(|r|-1)} [\![s , [\![ t, r ]\!] ]\!] + (-1)^{(|t|-1)(|s|-1)} [\![t , [\![ r, s ]\!] ]\!] + (-1)^{(|r|-1)(|t|-1)} [\![r , [\![ s, t ]\!] ]\!] = 0 \ .
    \end{align*}
    
For more details about the Nijenhuis-Schouten bracket, see for example \cite{Ib_ez_1998, Krasil'shchik1988}

\section{Modified Double Poisson Brackets}
The main reference for this section is work of S. Arthamonov (2017) \cite{ARTHAMONOV2017212}.

\subsection{Poisson Brackets for General Associative Algebras}.

Let $\A$ be an associative algebra. Conventional definition of Poisson bracket becomes too restrictive when $\A$ is essentially non-commutative.

The standard "set" axioms (cf. definition 3.1) meets with a problem of "non-trivial" example existence. 
This chapter is devoted to some constructions of "non-commutative" Poisson algebra
structures. This subject which has started with the paper of Ping Xu \cite{Ping-Xu}, where the author introduces a notion of Poisson structure on noncommutative algebras, and studies some of its properties and applications. Given an associative algebra $\A$ demonstrated that the Hochschild cohomology $HH^*(\A,\A)$ can be provided with a graded Lie algebra structure by means of the so-called $G-$bracket. This bracket, which was first introduced by M. Gerstenhaber, is the analogue of the Schouten bracket for multivector fields. A Poisson structure on $\A$ is then defined as an element of $HH^2(\A,\A)$  whose  $G$-bracket  with  itself  vanishes.   It  was  shown  that  such a Poisson structure induces an ordinary Poisson bracket on the center of $\A$.

We shall discuss the drawback of the naive definition of a Poisson structure on a non–commutative algebra $\A$.

First, we describe the following important lemma which appeared in the paper \cite{Voronov1995OnTP} and therefore (by the famous Arnold's statement) is attributed to Victor Ginzburg. 

\subsubsection{Ginzburg-Voronov lemma}.  
\begin{lemma}
If $\A$ is any Poisson algebra, then for all $a,b,c,d \in \A$ the following identity holds
    \begin{align*}
        [a,c] \{b,c\} = \{ a,c\}[b,d] \ .
    \end{align*}
\end{lemma}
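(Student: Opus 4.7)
The plan is to compute $\{ab,cd\}$ in two different ways using the Leibniz rule, and observe that equating the two expressions yields exactly the desired identity (after obvious cancellations). The statement as written in the excerpt appears to contain a typo ($\{b,c\}$ instead of $\{b,d\}$ on the left-hand side); I shall prove the symmetric version $[a,c]\{b,d\} = \{a,c\}[b,d]$, which is the standard Ginzburg--Voronov identity and what the argument naturally produces.

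First, I would record the two forms of the Leibniz rule. The axiom $\{ab,c\} = a\{b,c\} + \{a,c\}b$ together with skew-symmetry of the bracket forces $\{a,bc\} = b\{a,c\} + \{a,b\}c$. The key point for the non-commutative case is that the \emph{positions} of the multiplicands in each summand are fixed by these identities, even though the algebra product is not assumed commutative in this section.

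Next, I would expand $\{ab,cd\}$ in the two possible orders. Applying Leibniz first to the left argument and then to the right yields
\begin{align*}
\{ab,cd\} &= a\{b,cd\} + \{a,cd\}b \\
          &= ac\{b,d\} + a\{b,c\}d + c\{a,d\}b + \{a,c\}db.
\end{align*}
Applying Leibniz first to the right argument and then to the left yields
\begin{align*}
\{ab,cd\} &= c\{ab,d\} + \{ab,c\}d \\
          &= ca\{b,d\} + c\{a,d\}b + a\{b,c\}d + \{a,c\}bd.
\end{align*}

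Finally, equating the two expressions and cancelling the two terms $a\{b,c\}d$ and $c\{a,d\}b$ that appear on both sides leaves $ac\{b,d\} + \{a,c\}db = ca\{b,d\} + \{a,c\}bd$, which rearranges to $(ac-ca)\{b,d\} = \{a,c\}(bd-db)$, i.e.\ $[a,c]\{b,d\} = \{a,c\}[b,d]$. There is no real obstacle here: the argument is a one-line derivation-counting calculation, and the only subtlety is being careful about the order of factors, which is precisely what makes the identity non-trivial in the non-commutative setting (in a commutative algebra both sides of the conclusion are zero, and the lemma is vacuous, which foreshadows the motivation stated just before the lemma --- that the naive definition of a Poisson bracket on a non-commutative algebra is severely restrictive).
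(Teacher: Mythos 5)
Your proposal is correct and follows exactly the paper's own argument: expand $\{ab,cd\}$ by applying the Leibniz rule in the two possible orders, subtract, and cancel the common terms $a\{b,c\}d$ and $c\{a,d\}b$ to obtain $[a,c]\{b,d\}=\{a,c\}[b,d]$. You are also right that the statement as printed contains a typo ($\{b,c\}$ should read $\{b,d\}$ on the left-hand side); the identity your computation produces is the intended one.
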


\begin{proof}
Take the Poisson bracket \{ ab,cd\}. By the derivation property of the bracket we have
    \begin{align*}
         \{ ab,cd\} = a\{b,cd \} + \{ a,cd\} b = ac\{b,d \} + a\{b,c \}d + c\{a,d \}b + \{ a,c\}db \ .
    \end{align*}
On the other hand
    \begin{align*}
        \{ ab,cd\} = c\{ ab,d\} + \{ ab,c\}d =  ca\{b,d\} + c\{a,d\}b + a\{ b,c\}d + \{ a,c\}bd \ .
    \end{align*}
Subtracting the two equations yields the result.
\end{proof}

\begin{definition}
An algebra $\A$ is \emph{prime}, if the product of nonzero ideals in nonzero.
\end{definition}
\begin{definition}
An algebra $\A$ is \emph{simple} if it has no non-trivial two-sided ideals and the algebra product is non-trivial. 
\end{definition}

For example, the algebra given by
    \begin{align*}
        \{  \begin{pmatrix}
            0 & a \\
            0 & 0 
            \end{pmatrix} | \  a \in \Rbb \}     
    \end{align*}
is not simple, as the matrix product is always trivial in this case. 

The following theorem is due to D. R. Farkas and G. Letzter \cite{Farkas1998RingTF}.

\begin{theorem}
Let $\A$ be a prime and simple noncommutative Poisson algebra. Then for all $c,d \in \A$
    \begin{align*}
        \{c,d\} = \lambda [c,d]
    \end{align*}
for some $\lambda \in \operatorname{Z}_{P}(\A)$\footnote{See def. \eqref{Poisson center} for the definition of Poisson center $\operatorname{Z}_{P}(\A)$}
\end{theorem}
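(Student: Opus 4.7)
The starting point is the Ginzburg--Voronov identity $[a,c]\{b,d\} = \{a,c\}[b,d]$ just established. My plan is three-fold: (i) upgrade it to a sandwiched version $[a,c]\,x\,\{b,d\} = \{a,c\}\,x\,[b,d]$ valid for every $x \in \A$; (ii) invoke a Martindale-type primeness result to extract a scalar $\lambda$ in the extended centroid of $\A$; (iii) use that for a simple algebra the extended centroid coincides with $\operatorname{Z}(\A)$, so that $\lambda$ already lives in $\A$ and is automatically Poisson-central.

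\textbf{Sandwich identity.} For the middle-term form, apply Ginzburg--Voronov to the pair $(b, xd)$ in place of $(b,d)$: $[a,c]\{b, xd\} = \{a,c\}[b, xd]$. Expand both sides with Leibniz in the second slot, legitimate because both $\{-,-\}$ and $[-,-]$ are biderivations (the latter by associativity). This produces $[a,c]\{b,x\}d + [a,c]\,x\,\{b,d\}$ on the left and $\{a,c\}[b,x]d + \{a,c\}\,x\,[b,d]$ on the right. The terms $[a,c]\{b,x\}d$ and $\{a,c\}[b,x]d$ are equal by Ginzburg--Voronov applied to $(b,x)$ and right-multiplied by $d$, so they cancel, yielding $[a,c]\,x\,\{b,d\} = \{a,c\}\,x\,[b,d]$ for all $a,b,c,d,x \in \A$.

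\textbf{Martindale step.} Since $\A$ is noncommutative, choose $a_0, c_0$ with $u := [a_0, c_0] \neq 0$, and set $v := \{a_0, c_0\}$. The sandwich identity gives $u\,x\,\{b,d\} - v\,x\,[b,d] = 0$ for all $x, b, d$. The standard Martindale theorem on prime rings (if $\sum_i a_i x b_i = 0$ on all of $\A$ and the $a_i$ are linearly independent over the extended centroid $C$, then each $b_i = 0$) now forces $u$ and $v$ to be $C$-linearly dependent, because otherwise $\{b,d\}$ and $[b,d]$ would vanish for every $b,d$, contradicting noncommutativity. Since $u \neq 0$ and $C$ is a field, this means $v = \lambda u$ for some $\lambda \in C$. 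Substituting and using the centrality of $\lambda$ in the Martindale sense gives $u\,x\,(\{b,d\} - \lambda[b,d]) = 0$ for all $x$; primeness plus $u \neq 0$ then delivers $\{b,d\} = \lambda[b,d]$.

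\textbf{Conclusion and main obstacle.} Since $\A$ is simple, the extended centroid coincides with $\operatorname{Z}(\A)$, a field, so $\lambda \in \operatorname{Z}(\A) \subseteq \A$. Applying the new identity to the pair $(a, \lambda)$ gives $\{a,\lambda\} = \lambda[a, \lambda] = 0$ for every $a$ because $\lambda$ is central, so indeed $\lambda \in \operatorname{Z}_{P}(\A)$. The main obstacle is the Martindale step: it rests on the extended-centroid theory of prime rings, which lies outside what the notes develop. An alternative specifically for simple $\A$ would exploit that $u$ generates $\A$ as a two-sided ideal to build $\lambda$ directly from the relation $u \lambda = v$ via a density argument, but this merely trades one technical hurdle (extended centroid) for another (well-definedness of the density construction).
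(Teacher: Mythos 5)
The paper does not actually prove this theorem: it states it with a citation to Farkas--Letzter \cite{Farkas1998RingTF}, so there is no in-text argument to compare against. What you have written is, in outline, a correct reconstruction of the Farkas--Letzter proof itself, and every step checks out. The sandwich identity is derived cleanly: both $\{b,-\}$ and $[b,-]$ satisfy the same one-sided Leibniz expansion $\delta(xd)=\delta(x)d+x\delta(d)$ (for the commutator this is just associativity), so applying Ginzburg--Voronov to $(b,xd)$ and cancelling the $d$-terms via Ginzburg--Voronov for $(b,x)$ gives $[a,c]\,x\,\{b,d\}=\{a,c\}\,x\,[b,d]$ for all $x$. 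The Martindale step is invoked correctly: with $u=[a_0,c_0]\neq 0$ fixed, $C$-independence of $u$ and $v=\{a_0,c_0\}$ would force $[b,d]=0$ for all $b,d$, contradicting noncommutativity, so $v=\lambda u$ with $\lambda$ in the extended centroid, and primeness of the central closure then kills $u\,x\,(\{b,d\}-\lambda[b,d])$ uniformly in $x$, with a single $\lambda$ working for all pairs $(b,d)$. The passage from the extended centroid to $\operatorname{Z}(\A)$ for a simple unital algebra is standard, and your final observation $\{a,\lambda\}=\lambda[a,\lambda]=0$ does place $\lambda$ in $\operatorname{Z}_P(\A)$ as the statement requires. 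The only caveat is the one you already flag yourself: the argument imports two facts from the structure theory of prime rings (Martindale's lemma on identities $\sum a_i x b_i=0$, and the identification of the extended centroid of a simple ring with its center) that these notes do not develop; since the notes themselves outsource the entire theorem to the literature, this is an acceptable level of dependence, but a self-contained version would need to prove at least the Martindale lemma or carry out the density construction you sketch in your last paragraph.
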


\begin{definition}[\cite{CRAWLEYBOEVEY2011205}]
A map
    \begin{align*}
        \{,\} \colon \A \otimes\A \rightarrow \A
    \end{align*}
is an $H_0$-Poisson bracket if for all $a,b,c \in \A$
    \begin{enumerate}
        \item $\{a,bc\}=b\{a,c\}+\{a,b\}c$ (Right Leibnitz identity),
        \item $\{a,\{b,c\}\}-\{b,\{a,c\}\}=\{\{a,b\},c\}$ (Left Loday-Jacobi identity),
        \item $\{a,b\}+\{b,a\}\equiv0\bmod [\A,\A]$,
        \item $\{ab,c\}=\{ba,c\}$.
    \end{enumerate}
\end{definition}

\begin{corollary}[\cite{CRAWLEYBOEVEY2011205}]
An $H_0$-Poisson bracket induces a Lie Algebra structure $\{\_\}^{Lie} \colon \mathcal \A_\natural\otimes \A_\natural\rightarrow\A_\natural$ on \emph{abelianization} $\A_\natural : =\A/[\A,\A]$ of $\A$.
\end{corollary}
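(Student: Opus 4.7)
The plan is to verify the three things that the statement requires: that $\{-,-\}$ descends to a well-defined bilinear map on $\A_\natural := \A/[\A,\A]$, that the induced operation is antisymmetric, and that it satisfies the classical Jacobi identity. Each of these will follow from one of the four $H_0$-axioms, used together with the commutator relations in $\A$.

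First I would show well-definedness. Let $\pi\colon \A\to \A_\natural$ denote the canonical projection; we want $\pi\circ\{-,-\}\colon \A\otimes\A\to\A_\natural$ to factor through $\A_\natural\otimes\A_\natural$. For the left slot, axiom (4) gives $\{ab,c\}=\{ba,c\}$, so $\{[a,b],c\}=0$ in $\A$ itself, hence in $\A_\natural$. For the right slot one needs $\{a,[b,c]\}\in[\A,\A]$. Using the right Leibniz rule (axiom (1)) twice,
\begin{align*}
\{a,bc\}-\{a,cb\} &= \bigl(b\{a,c\}+\{a,b\}c\bigr)-\bigl(c\{a,b\}+\{a,c\}b\bigr) \\
&= [b,\{a,c\}]+[\{a,b\},c],
\end{align*}
which lies in $[\A,\A]$. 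Thus the induced map $\{-,-\}^{Lie}\colon \A_\natural\otimes\A_\natural\to\A_\natural$ is well defined.

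Next, antisymmetry of $\{-,-\}^{Lie}$ is immediate from axiom (3), which says precisely that $\{a,b\}+\{b,a\}\in[\A,\A]=\ker\pi$.

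The nontrivial step is the Jacobi identity, and this is where I expect the main (though still purely formal) obstacle: the left Loday--Jacobi identity (axiom (2)) is not symmetric in its three inputs, so one has to use the antisymmetry available only after passing to $\A_\natural$ to symmetrize it. Projecting axiom (2) to $\A_\natural$ gives
\[
\{a,\{b,c\}\}^{Lie}-\{b,\{a,c\}\}^{Lie}-\{\{a,b\},c\}^{Lie}=0.
\]
Applying antisymmetry of $\{-,-\}^{Lie}$ to the second and third terms,
\[
-\{b,\{a,c\}\}^{Lie}=\{\{a,c\},b\}^{Lie}=-\{\{c,a\},b\}^{Lie}=\{b,\{c,a\}\}^{Lie},
\]
and $-\{\{a,b\},c\}^{Lie}=\{c,\{a,b\}\}^{Lie}$, so the identity becomes
\[
\{a,\{b,c\}\}^{Lie}+\{b,\{c,a\}\}^{Lie}+\{c,\{a,b\}\}^{Lie}=0,
\]
which is the classical Jacobi identity. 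Combined with the bilinearity inherited from $\{-,-\}$, this proves that $(\A_\natural,\{-,-\}^{Lie})$ is a Lie algebra. The only thing to be careful about is sign bookkeeping in the last step; the rest is a direct translation of the axioms.
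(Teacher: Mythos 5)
Your proof is correct. The paper states this corollary without proof, merely citing Crawley-Boevey, so there is nothing in the text to compare against; your verification --- axiom (4) giving $\{[\A,\A],-\}=0$ exactly, the right Leibniz rule giving $\{\A,[\A,\A]\}\subset[\A,\A]$, axiom (3) giving antisymmetry on $\A_\natural$, and the symmetrization of the left Loday--Jacobi identity using that antisymmetry to recover the classical Jacobi identity --- is the standard argument, and each step checks out.
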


\subsubsection{Representation scheme.}

Following philosophy by M.~Kontsevich \cite{Kontesvich1993}, any algebraic property that makes geometric sense is mapped to its commutative counterpart by \emph{Representation Functor}
    \begin{gather*}
        \mathrm{Rep}_N:\quad\mathrm{fin.\;gen.\;Associative\; algebras} \rightarrow\mathrm{Affine\; schemes} \ ,\\
        \mathrm{Rep}_N(\mathcal A)=Hom(\mathcal A,Mat_N(\mathbb C)) \ .
    \end{gather*}

It assigns to a finitely generated associative algebra $\mathcal A=\langle x^{(1)},\dots,x^{(k)}\rangle/\mathcal R$ a scheme of its' $N\times N$ matrix representations. Let
    \begin{align}
        \varphi(x^{(i)})=\left(\begin{array}{ccc}
        x^{(i)}_{11}&\dots&x^{(i)}_{1N}\\
        \vdots&&\vdots\\
        x^{(i)}_{N1}&\dots&x^{(i)}_{NN}
        \end{array}\right) \ .
    \end{align}
Representations of $\mathcal A$ then form an affine scheme $\mathcal V$  with a coordinate ring $\mathbb C[\mathcal V]:=\mathbb C\left[x^{(i)}_{j,k}\right]/\varphi(\mathcal R)$. Denote as $\mathbb C_{\mathcal V}$ - the corresponding sheaf of rational functions.

\subsubsection{Moduli Space of Representations}

Change of basis corresponds to the action $GL_N(\mathbb C)\circlearrowleft Mat_N(\mathbb C)$,
    \begin{align*}
         M\rightarrow gMg^{-1} \ .
    \end{align*}
It induces $GL_N(\mathbb C)\circlearrowleft\mathbb C[\mathcal V]$.
The invariant subalgebra $\mathbb C[\mathcal V]^{GL_N(\mathbb C)}\subset\mathbb C[\mathcal V]$ is then a coordinate ring of the corresponding moduli space of representations.
    \begin{align*}
        \varphi_0:\mathcal A_\natural\rightarrow \mathbb C[\mathcal V]^{GL_N(\mathbb C)},\quad \varphi_0(x)=\textrm{Tr}\,\varphi(x) \ .
    \end{align*}
    
\begin{lemma}[Procesi, 1976]
Subset $\varphi_0(\mathcal A_\natural)$ generates $\mathbb C[\mathcal V]^{GL_N(\mathbb C)}$.
\end{lemma}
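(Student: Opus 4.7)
The plan is to reduce the statement to the classical first fundamental theorem of matrix invariants and then pull the result back through the presentation $\mathcal{A}=\mathbb{C}\langle x^{(1)},\dots,x^{(k)}\rangle/\mathcal{R}$.

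First I would describe $\varphi_0(\mathcal{A}_\natural)$ concretely. Every element of $\mathcal{A}$ is a $\mathbb{C}$-linear combination of monomials $x^{(i_1)}\cdots x^{(i_m)}$, so under $\varphi$ it becomes a linear combination of matrix products $X_{i_1}\cdots X_{i_m}\in \mathrm{Mat}_N(\mathbb{C}[\mathcal{V}])$, where $X_j$ is the generic matrix with $(a,b)$-entry $x^{(j)}_{a,b}$. Composing with the trace yields $\mathbb{C}$-linear combinations of $\mathrm{Tr}(X_{i_1}\cdots X_{i_m})$, and the cyclicity $\mathrm{Tr}(AB)=\mathrm{Tr}(BA)$ explains why $\varphi_0$ factors through $\mathcal{A}_\natural=\mathcal{A}/[\mathcal{A},\mathcal{A}]$.

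Next I would handle the free case $\mathcal{R}=0$, in which $\mathcal{V}=\mathrm{Mat}_N(\mathbb{C})^k$. Here I would invoke the first fundamental theorem for several matrices (Sibirskii 1968, Procesi 1976): the ring $\mathbb{C}[\mathrm{Mat}_N(\mathbb{C})^k]^{GL_N}$ is generated as a $\mathbb{C}$-algebra by the traces $\mathrm{Tr}(X_{i_1}X_{i_2}\cdots X_{i_m})$ of all possible words in the generic matrices. Combined with the first step, this settles the lemma for the free algebra.

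For general $\mathcal{R}$, the scheme $\mathcal{V}$ sits inside $\mathrm{Mat}_N(\mathbb{C})^k$ as a $GL_N$-invariant closed subscheme cut out by the matrix entries of $\varphi(\mathcal{R})$, and restriction gives a $GL_N$-equivariant surjection $\mathbb{C}[\mathrm{Mat}_N(\mathbb{C})^k]\twoheadrightarrow \mathbb{C}[\mathcal{V}]$. Since $GL_N(\mathbb{C})$ is linearly reductive in characteristic zero, the Reynolds averaging operator yields a surjection $\mathbb{C}[\mathrm{Mat}_N(\mathbb{C})^k]^{GL_N}\twoheadrightarrow \mathbb{C}[\mathcal{V}]^{GL_N}$. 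Restricting the trace generators of the free case to $\mathcal{V}$ then reproduces exactly $\varphi_0(\mathcal{A}_\natural)$, completing the proof.

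The main obstacle is the first fundamental theorem itself, which is the substantive input. A self-contained argument requires Weyl's polarization technique to reduce to multilinear invariants, followed by a classification of those via the Schur--Weyl duality between $GL_N$ and the symmetric group $S_m$ acting on $(\mathbb{C}^N)^{\otimes m}$; contracting such a multilinear invariant with copies of the generic matrices produces precisely traces of monomials. Everything else in the plan, namely the Reynolds descent and the identification of $\varphi_0(\mathcal{A}_\natural)$ with trace-words, is formal once this classical input is in hand.
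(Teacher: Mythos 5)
Your outline is correct; note that the paper itself gives no proof of this lemma at all --- it is stated as a black box with the attribution to Procesi, so there is no in-text argument to compare against. What you have written is the standard proof: the identification of $\varphi_0(\mathcal A_\natural)$ with the linear span of trace-words $\mathrm{Tr}(X_{i_1}\cdots X_{i_m})$ (with cyclicity of the trace accounting for passage to $\mathcal A_\natural=\mathcal A/[\mathcal A,\mathcal A]$), the first fundamental theorem of matrix invariants for the free case $\mathcal V=\mathrm{Mat}_N(\mathbb C)^k$, and the descent to a general quotient via the $GL_N(\mathbb C)$-equivariant surjection $\mathbb C[\mathrm{Mat}_N(\mathbb C)^k]\twoheadrightarrow\mathbb C[\mathcal V]$ together with exactness of invariants for a linearly reductive group in characteristic zero. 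One point worth making explicit in the last step: the ideal generated by the entries of $\varphi(\mathcal R)$ is $GL_N(\mathbb C)$-stable because conjugation $X_i\mapsto gX_ig^{-1}$ sends the matrix $\varphi(r)$ to $g\varphi(r)g^{-1}$ and hence preserves the linear span of its entries; this is what makes the surjection equivariant and the Reynolds argument applicable. You are also right that the only genuinely substantive input is the first fundamental theorem itself (polarization plus Schur--Weyl duality), which both you and the paper ultimately outsource to Procesi's 1976 work.
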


\begin{proposition}[Crawley-Boewey, 2011]
An $H_0$-Poisson bracket induces a~conventional Poisson bracket
    \begin{align*}
        \{,\}^{inv}:\mathbb C[\mathcal V]^{GL_N(\mathbb C)}\otimes \mathbb C[\mathcal V]^{GL_N(\mathbb C)}\rightarrow \mathbb C[\mathcal V]^{GL_N(\mathbb C)} \ .
    \end{align*}
\end{proposition}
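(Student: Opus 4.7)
The strategy is to use Procesi's lemma to reduce the problem to defining the bracket on the image $\varphi_0(\A_\natural)$ and then extending by the Leibniz rule. On trace generators I would declare
\[
\{\varphi_0(\bar a),\varphi_0(\bar b)\}^{inv}:=\varphi_0\bigl(\{\bar a,\bar b\}^{Lie}\bigr),\qquad \bar a,\bar b\in\A_\natural.
\]
Well-definedness of the right-hand side at the level of $\A_\natural$ is precisely the preceding corollary: axioms (3) and (4) of the $H_0$-Poisson bracket ensure that $\{-,-\}$ descends to a Lie bracket on $\A_\natural=\A/[\A,\A]$, and the trace $\operatorname{Tr}\varphi(\cdot)$ is by construction a class function on $\A$ that factors through $\A_\natural$.

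Next I would extend $\{-,-\}^{inv}$ from the linear span of trace generators to the whole invariant ring. A clean way to organize this is to equip the symmetric algebra $S(\A_\natural)$ with its Kirillov--Kostant--Souriau bracket (which exists because $(\A_\natural,\{-,-\}^{Lie})$ is a Lie algebra) and consider the surjective algebra map $\Phi\colon S(\A_\natural)\twoheadrightarrow\mathbb{C}[\mathcal V]^{GL_N(\mathbb C)}$ sending a generator $\bar a$ to $\varphi_0(\bar a)=\operatorname{Tr}\varphi(a)$; surjectivity is Procesi's lemma. Pushing the KKS bracket through $\Phi$ produces the desired $\{-,-\}^{inv}$, provided $\ker\Phi$ is a Poisson ideal.

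The hard part is precisely this last point: verifying that polynomial relations among trace invariants are preserved by the bracket. The $GL_N$-invariant relations among trace generators are controlled by classical invariant theory (Cayley--Hamilton identities and their polarizations), and one must check that bracketing any such relation with a generator yields another relation. Here the Leibniz rule (1) of the $H_0$-Poisson bracket plays a central role: it ensures that $\{-,b\}$ behaves as a derivation when unraveling products inside $\A$, which, combined with cyclicity of the trace, translates into the desired derivation property on $\mathbb{C}[\mathcal V]^{GL_N(\mathbb C)}$ after $GL_N$-averaging. A useful intermediate step is to first reinterpret $\{\operatorname{Tr}\varphi(a_1\cdots a_k),\operatorname{Tr}\varphi(b)\}^{inv}$ by iterating axiom (1) inside $\A$, then take traces and use (4) to remove ambiguity in the chosen cyclic representative.

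Once the bracket is well-defined on $\mathbb{C}[\mathcal V]^{GL_N(\mathbb C)}$, skew-symmetry on generators follows from axiom (3), the Leibniz rule is built into the construction via $\Phi$, and the Jacobi identity on generators reduces to the Loday--Jacobi identity (2) modulo $[\A,\A]$; Jacobi on arbitrary invariants then propagates by the derivation property together with Procesi's generation statement.
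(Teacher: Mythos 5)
The paper states this proposition without proof (it is quoted from Crawley-Boevey's 2011 paper), so your proposal can only be judged on its own merits. The overall architecture is sensible and matches the standard argument in outline: define the bracket on trace generators by $\{\varphi_0(\bar a),\varphi_0(\bar b)\}^{inv}=\varphi_0(\{\bar a,\bar b\}^{Lie})$, extend through the surjection $\Phi\colon S(\A_\natural)\to\mathbb C[\mathcal V]^{GL_N(\mathbb C)}$ supplied by Procesi's lemma, and reduce everything to showing $\ker\Phi$ is a Poisson ideal for the Kirillov--Kostant--Souriau bracket. Skew-symmetry from axiom (3) together with cyclicity of the trace, and Jacobi from axiom (2) plus the derivation property, are also correctly identified.

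The genuine gap is exactly the step you flag as ``the hard part'': you do not prove that $\ker\Phi$ is a Poisson ideal, and the route you sketch --- direct control of the trace identities (Cayley--Hamilton and its polarizations) followed by ``$GL_N$-averaging'' --- would require Procesi's full description of trace identities and is not how the result is established; the averaging remark is moreover a red herring, since trace functions are already invariant. The missing idea is to geometrize axiom (1): for each fixed $a$, axioms (1) and (4) say that $x\mapsto\{a,x\}$ is a derivation $d_{\bar a}$ of $\A$ depending only on $\bar a\in\A_\natural$. Any such derivation induces a $GL_N(\mathbb C)$-equivariant vector field on $\mathrm{Rep}_N(\A)$, i.e.\ a derivation $D_{\bar a}$ of $\mathbb C[\mathcal V]$ determined by $D_{\bar a}(x^{(i)}_{jk})=\varphi\bigl(d_{\bar a}(x^{(i)})\bigr)_{jk}$, which preserves the invariant subring and satisfies $D_{\bar a}\bigl(\mathrm{Tr}\,\varphi(b)\bigr)=\mathrm{Tr}\,\varphi(\{a,b\})$. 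Then $\Phi\circ\{-,\bar a\}_{KKS}$ and $-D_{\bar a}\circ\Phi$ are both derivations along the algebra map $\Phi$ that agree on the generators of $S(\A_\natural)$ (here axiom (3) and the vanishing of traces on $[\A,\A]$ enter), hence they coincide; consequently $\Phi(u)=0$ forces $\Phi(\{u,\bar a\}_{KKS})=-D_{\bar a}(\Phi(u))=0$, and by the Leibniz rule and skew-symmetry $\ker\Phi$ is a Poisson ideal with no need to analyze the relations among trace functions at all. Inserting this argument in place of your invariant-theoretic sketch closes the gap and completes the proof.
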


\subsection{Double Poisson Brackets}

\begin{definition}\label{main}[M.~Van~den~Bergh, 2008]
A map $\db{,}:\mathcal A\otimes\mathcal A\rightarrow \mathcal A\otimes\mathcal A$ is a~double Poisson bracket if for all $a,b,c\in\mathcal A$:
    \begin{enumerate}
        \item $\db{a,b}=-\db{b,a}^{op},$
        \item $\db{ab,c}=(1\otimes a)\db{b,c}+\db{a,c}(b\otimes1),$
        \item $\db{a,bc}=(b\otimes1)\db{a,c}+\db{a,b}(1\otimes c),$
        \item $R_{12}R_{23}+R_{31}R_{12}+R_{23}R_{31}=0,$ where $R_{m,n}(a_1\otimes\dots\otimes a_k)=a_1\otimes\dots\otimes a_{m-1}\otimes\db{a_m,a_n}'\otimes\dots\otimes
            \db{a_m,a_n}''\otimes\dots\otimes a_k.$
    \end{enumerate}
\end{definition}

\begin{proposition}[\cite{DoublePoissonBergh}]
Double Poisson bracket induces a conventional Poisson bracket
    \begin{align*}
        \{,\}^{\mathcal V}:\mathbb C_{\mathcal V}\otimes\mathbb C_{\mathcal V}\rightarrow\mathbb C_{\mathcal V}\qquad \left\{x^{(m)}_{ij},x^{(n)}_{kl}\right\}^{\mathcal V}=\varphi\left(\db{x^{(m)}\otimes x^{(n)}}\right)_{(kj),(il)} \ .
    \end{align*}
\end{proposition}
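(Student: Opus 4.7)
The plan is to define $\{,\}^{\mathcal V}$ directly on the matrix-entry generators via the stated formula, extend by bilinearity and the Leibniz rule to all polynomials, and then check each Poisson axiom by translating the corresponding axiom of the double bracket. Throughout I use a Sweedler-type notation $\db{a,b}=\db{a,b}'\otimes\db{a,b}''$, so the defining formula reads $\{a_{ij},b_{kl}\}^{\mathcal V}=\varphi(\db{a,b}')_{kj}\,\varphi(\db{a,b}'')_{il}$ (summation over the Sweedler components suppressed).

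\textbf{Skew-symmetry, Leibniz and descent to $\mathbb{C}[\mathcal V]$.} First I would verify skew-symmetry on generators: axiom (1) of Definition \ref{main} gives $\db{b,a}'\otimes\db{b,a}'' = -\db{a,b}''\otimes\db{a,b}'$, and substituting into the formula, the index pattern $(il),(kj)$ for $\{b_{kl},a_{ij}\}^{\mathcal V}$ matches $(kj),(il)$ for $\{a_{ij},b_{kl}\}^{\mathcal V}$ up to a minus sign, using commutativity of $\mathbb C[\mathcal V]$. The bracket is extended to all polynomials by declaring it a biderivation, which is unambiguous on the free polynomial algebra $\mathbb C[x^{(i)}_{jk}]$. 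To see that it descends to $\mathbb{C}[\mathcal V]=\mathbb{C}[x^{(i)}_{jk}]/\varphi(\mathcal R)$, I would prove by induction on word length the identity
\[
\{f_{ij},g_{kl}\}^{\mathcal V}=\varphi\bigl(\db{f,g}\bigr)_{(kj),(il)}\qquad\text{for all }f,g\in\mathcal A,
\]
where on the left $f_{ij}$ is expanded through $(ab)_{ij}=\sum_{s}a_{is}b_{sj}$. The inductive step, applied to a product in the first slot, reduces to unpacking the right-hand side $\db{ab,c}=(1\otimes a)\db{b,c}+\db{a,c}(b\otimes 1)$ of axiom (2) in matrix entries, and one checks that this exactly reproduces the Leibniz-expanded left-hand side; axiom (3) handles the second slot. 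Since $\db{,}$ is well-defined on $\mathcal A$, the right-hand side vanishes whenever $f\in\mathcal R$, so the ideal $\varphi(\mathcal R)$ is preserved by the bracket.

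\textbf{Jacobi identity.} This is the main technical step and the central obstacle. I would compute $\{\{a_{ij},b_{kl}\}^{\mathcal V},c_{mn}\}^{\mathcal V}$ by first applying the formula to the inner bracket, then using the Leibniz rule for $\{-,c_{mn}\}^{\mathcal V}$ to pass through the product of two matrix-entry factors produced by $\varphi(\db{a,b}')_{kj}\varphi(\db{a,b}'')_{il}$; the identity from the previous paragraph then rewrites the result in terms of matrix entries of an element of $\mathcal A^{\otimes 3}$ built from the iterated tensors $R_{12}R_{23}$, $R_{31}R_{12}$, $R_{23}R_{31}$ of axiom (4). Summing the three cyclic permutations of $(a_{ij},b_{kl},c_{mn})$ and carefully tracking which matrix index threads through which tensor slot, the ordinary Jacobiator becomes the $((kj),(il),(mn))$-entry (up to index relabelling) of the double Jacobiator $R_{12}R_{23}+R_{31}R_{12}+R_{23}R_{31}$, which vanishes by axiom (4). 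The hard part is precisely this index bookkeeping: the three outer index pairs $(i,j)$, $(k,l)$, $(m,n)$ must be threaded consistently through the nested Sweedler components so that the cyclic sum collapses to the double Jacobiator rather than to something merely proportional to it. Once the bookkeeping is in place, the Jacobi identity on generators follows, and Leibniz propagates it to all of $\mathbb C[\mathcal V]$.
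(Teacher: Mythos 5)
The paper itself offers no proof of this proposition: it is stated with a citation to Van den Bergh \cite{DoublePoissonBergh} and the text immediately moves on to the free-algebra special case, so there is nothing internal to compare your argument against. On its own terms, your outline follows the standard route, and the parts you actually carry out are sound: the skew-symmetry check via axiom (1) together with commutativity of $\mathbb C[\mathcal V]$ is correct, and the key identity $\{f_{ij},g_{kl}\}^{\mathcal V}=\varphi(\db{f,g})_{(kj),(il)}$ for arbitrary $f,g\in\mathcal A$ does follow by induction from axioms (2) and (3) exactly as you describe (one can verify the index pattern: $(ab)_{ij}=\sum_s a_{is}b_{sj}$ and Leibniz reproduce precisely the two terms $(1\otimes a)\db{b,c}$ and $\db{a,c}(b\otimes 1)$). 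One small imprecision: for descent to $\mathbb C[\mathcal V]$ the right-hand side for $f\in\mathcal R$ does not literally vanish in the free polynomial ring; rather $\db{f,g}$ lies in $\mathcal R\otimes F+F\otimes\mathcal R$ (with $F$ the free algebra), so its matrix entries lie in the ideal generated by $\varphi(\mathcal R)$, which is what is actually needed.

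The genuine gap is the Jacobi identity, which is the entire nontrivial content of the proposition. You correctly identify that it must reduce to axiom (4), but the reduction itself — that the cyclic sum of $\{\{a_{ij},b_{kl}\}^{\mathcal V},c_{mn}\}^{\mathcal V}$ over $(a_{ij},b_{kl},c_{mn})$ equals a fixed matrix entry of $R_{12}R_{23}+R_{31}R_{12}+R_{23}R_{31}$ applied to $a\otimes b\otimes c$, with a consistent threading of the three index pairs — is asserted, not demonstrated. This is exactly the step where sign and index errors typically arise (note in particular that axiom (4) as stated in the paper is phrased through the operators $R_{m,n}$ rather than the more common left-substitution form $\db{a,\db{b,c}}_{L}$ plus cyclic permutations, so you would first have to unwind what $R_{12}R_{23}$ etc.\ mean on $a\otimes b\otimes c$ before matching entries). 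Until that computation is written out, the proof is a plan for the hard part rather than a proof of it; everything else in your argument is routine.
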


If $\A=\mathbb C<x_1,\ldots,x_m>$  is the free associative algebra, then $\mathbb C[{\rm Rep}_n(\A)] = \mathbb C[x_{i,\alpha}^j]$
where $1\leq \alpha \leq m.$ 

If $\db{x_{\alpha},x_{\beta}}$ is a double Poisson bracket on $\A=\mathbb C<x_1,\ldots,x_m>$, then, using the Sweedler 
convention and drop the sign of sum, we obtain the conventional  Poisson brackets on $\mathbb C[{\rm Rep}_n(\A)] $:
$$\lbrace x_{i,\alpha}^j,x_{k,\beta}^l\rbrace = \db{x_{\alpha},x_{\beta}}_{k}^{'j}\db{x_{\alpha},x_{\beta}}_{i}^{''l}$$

 \subsection{Quadratic double Poisson brackets}

Let $\A=\mathbb C<x_1,\ldots,x_m>$ be the free associative algebra. If double brackets $\db{x_i, x_j}$ between all generators are fixed, then the bracket between two arbitrary elements of $\A$ 
is uniquely defined by identities (\ref{main}). It follows from  (\ref{main}) that constant, linear, and quadratic double brackets are defined by 
\begin{equation} \label{dconst}
\db{x_i,x_j} = c_{ij} 1\otimes 1, \qquad c_{i,j}=-c_{j,i},
 \end{equation}
\begin{equation} \label{dlin}
\db{x_i,x_j} = b_{ij}^k x_k\otimes 1 - b_{ji}^k1\otimes x_k,
 \end{equation}
and  
\begin{equation} \label{dquad}
\db{x_{\alpha}, x_{\beta}} =r_{\alpha \beta}^{u v} \, x_u \otimes x_v+a_{\alpha \beta}^{v u} \, x_u x_v\otimes 1-a_{\beta \alpha}^{u v} \,1\otimes  x_v x_u,   \end{equation}
where
\begin{equation}\label{r1}
r^{\sigma \epsilon}_{\alpha\beta}=-r^{\epsilon\sigma}_{\beta\alpha},
\end{equation}
correspondingly.  The summation with respect to repeated indexes is assumed.

It is easy to verify that the bracket (\ref{dconst}) satisfies (\ref{main}) for any skew-symmetric tensor $c_{ij}$. For the bracket (\ref{dlin}) the condition (\ref{main}) is equivalent 
to the identity 
\begin{equation}\label{r0}
b^{\mu}_{\alpha \beta} b^{\sigma}_{\mu \gamma}=b^{\sigma}_{\alpha \mu} b^{\mu}_{\beta \gamma},
\end{equation}
which means that  $b^{\sigma}_{\alpha \beta}$
are structure constants of an associative algebra ${\cal A}$.  

\begin{proposition}
 The bracket (\ref{dquad}) satisfies (\ref{main}) iff the following relations hold:

\begin{equation}\label{r2}
r^{\lambda\sigma}_{\alpha\beta}
r^{\mu\nu}_{\sigma\tau}+r^{\mu\sigma}_{\beta\tau} r^{\nu\lambda}_{\sigma\alpha}+r^{\nu\sigma}_{\tau\alpha} r^{\lambda\mu}_{\sigma\beta}=0,
\end{equation}

\begin{equation}\label{r3}
a^{\sigma\lambda}_{\alpha\beta} a^{\mu\nu}_{\tau\sigma}=a^{\mu\sigma}_{\tau\alpha} a^{\nu\lambda}_{\sigma\beta},
\end{equation}
\begin{equation}\label{r4}
a^{\sigma\lambda}_{\alpha\beta} a^{\mu\nu}_{\sigma\tau}=a^{\mu\sigma}_{\alpha\beta} r^{\lambda\nu}_{\tau\sigma}+a^{\mu\nu}_{\alpha\sigma}
r^{\sigma\lambda}_{\beta\tau}
\end{equation}
and
\begin{equation}\label{r5}
a^{\lambda\sigma}_{\alpha\beta} a^{\mu\nu}_{\tau\sigma}=a^{\sigma\nu}_{\alpha\beta} r^{\lambda\mu}_{\sigma\tau}+a^{\mu\nu}_{\sigma\beta}
r^{\sigma\lambda}_{\tau\alpha}.
\end{equation}
\end{proposition}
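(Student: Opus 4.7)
The plan is as follows. Axioms (2) and (3) of Definition~\ref{main} hold by construction, since they are the rules used to extend the quadratic ansatz~(\ref{dquad}) from generators to arbitrary elements of the free algebra $\A$. Axiom (1) (skew-symmetry) is equivalent to the stated constraint~(\ref{r1}) together with a cosmetic matching of the two $a$-terms: a direct expansion of $\db{x_\beta,x_\alpha}^{op}$ and comparison of the three pieces of the quadratic ansatz show that the $a$-contributions match identically under renaming of dummy indices, while the $r$-contributions match precisely when $r^{\sigma\epsilon}_{\alpha\beta}=-r^{\epsilon\sigma}_{\beta\alpha}$. Hence only axiom~(4), the double Jacobi identity, has genuine content, and relations~(\ref{r2})--(\ref{r5}) must arise from it.

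The key structural observation is that the quadratic ansatz is \emph{degree-preserving}: each of the three summands in $\db{x_\alpha,x_\beta}$ contributes a total of $2$ generators across its two tensor slots, so each application of $R_{mn}$ preserves the total generator-degree in $\A^{\otimes 3}$. Starting from $x_\alpha\otimes x_\beta\otimes x_\gamma$ (total degree $3$), the Jacobiator therefore lies in the degree-$3$ subspace of $\A^{\otimes 3}$, whose distinct \emph{shapes} (tuples of per-slot degrees summing to $3$) are $(1,1,1)$, the permutations of $(2,1,0)$, and the permutations of $(3,0,0)$. I would expand each of the three cyclic Jacobiator terms $R_{12}R_{23}$, $R_{31}R_{12}$, $R_{23}R_{31}$ as a sum of monomials labeled by shape, invoking the Leibniz rule (axiom~(3)) whenever the inner $R$-action encounters a product of two generators; branches that reach $\db{-,1}$ vanish by the elementary consequence $\db{-,1}=0$ of Leibniz.

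Contributions then separate cleanly by shape, and each shape yields an independent constraint since distinct basis monomials of $\A^{\otimes 3}$ are linearly independent. Shape $(1,1,1)$ receives contributions only from $r$-$r$ chains, so equating the coefficient of $x_\lambda\otimes x_\mu\otimes x_\nu$ to zero and realigning indices via~(\ref{r1}) through the three cyclic sums produces exactly~(\ref{r2}). The permutations of shape $(3,0,0)$ receive contributions only from $a$-$a$ chains; their vanishing for arbitrary input indices and across the three cyclic sums collapses to~(\ref{r3}). The permutations of shape $(2,1,0)$ receive contributions from both $a$-$a$ chains (producing the quadratic $aa$ sides of~(\ref{r4}) and~(\ref{r5})) and from mixed $a$-$r$ or $r$-$a$ chains (producing the $ar$ terms on the other side); depending on whether the degree-$2$ slot sits to the left or right of the lone degree-$1$ slot relative to the cyclic order of the inputs, one obtains (\ref{r4}) or~(\ref{r5}).

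The main obstacle is the combinatorial bookkeeping. Each of the three cyclic Jacobiator terms unfolds into up to nine sub-summands after two $R$-applications (three branches per bracket), and several of the $a$-type branches double under Leibniz. To bring every contribution into a canonical basis monomial and compare coefficients, one must systematically rename dummy indices and occasionally apply~(\ref{r1}) to reshuffle the upper indices of an $r$-factor. A useful consistency check is that shape $(1,1,1)$ must acquire the full three-fold cyclic symmetry of the Jacobiator (matching the three-term structure of~(\ref{r2})), whereas the asymmetric shape $(2,1,0)$ breaks cyclic symmetry and yields the two separate relations~(\ref{r4}) and~(\ref{r5}); conversely, assuming (\ref{r2})--(\ref{r5}) makes each shape-coefficient vanish, hence the Jacobiator itself, giving the ``if'' direction.
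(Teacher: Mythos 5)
The paper states this proposition without proof (the verification is deferred to the literature, cf.\ \cite{OdRubSok_1, odesskii2012bi}), so there is no in-text argument to compare yours against; what you propose is the standard direct verification, and its architecture is sound. Your reductions are all correct: axioms (2)--(3) of Definition~\ref{main} hold by construction on the free algebra, axiom (1) on generators is exactly \eqref{r1} after the dummy-index renaming you describe, $\db{-,1}=0$ follows from the Leibniz rule, and the degree-preservation argument correctly partitions the Jacobiator applied to $x_\alpha\otimes x_\beta\otimes x_\gamma$ into the shapes $(1,1,1)$, permutations of $(2,1,0)$, and permutations of $(3,0,0)$, which are supported on disjoint sets of basis monomials of $\A^{\otimes 3}$ and hence give independent constraints; the assignment of $rr$-chains to $(1,1,1)$ giving \eqref{r2}, $aa$-chains to $(3,0,0)$ giving \eqref{r3}, and the mixed shapes giving \eqref{r4}--\eqref{r5} is the correct accounting. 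Two points deserve to be made explicit rather than implicit. First, you check axiom (4) only on triples of generators; you should state (or prove) that the Jacobiator $R_{12}R_{23}+R_{31}R_{12}+R_{23}R_{31}$, for a bracket extended by the Leibniz rules, is itself a derivation-type expression in each argument, so that its vanishing on generators implies its vanishing identically --- this is what licenses the reduction and is where Van den Bergh's formalism is actually used. Second, the entire content of the proposition lives in the index bookkeeping that your outline describes but does not execute; as written, the identification of each shape-coefficient with the corresponding relation \eqref{r2}--\eqref{r5} (including the splitting of the $(2,1,0)$ shapes into the two distinct relations \eqref{r4} and \eqref{r5} according to the position of the degree-$2$ slot) is asserted rather than derived, so the proposal is a correct plan rather than a complete proof.
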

The conventional Poisson bracket corresponding to any double Poisson bracket (\ref{dquad})  can be defined on  $\mathbb C[{\rm Rep}_n (\A)]$ by the following way \cite{odesskii2012bi}: 
\begin{equation}\label{Poisson}
\{x^j_{i,\alpha},x^{j^{\prime}}_{i^{\prime},\beta}\}=
r^{\gamma\epsilon}_{\alpha\beta}x^{j^{\prime}}_{i,\gamma}x^j_{i^{\prime},\epsilon}+
a^{\gamma\epsilon}_{\alpha\beta}x^k_{i,\gamma}x^{j^{\prime}}_{k,\epsilon}\delta^j_{i^{\prime}}-
a^{\gamma\epsilon}_{\beta\alpha}x^k_{i^{\prime},\gamma}x^{j}_{k,\epsilon}\delta^{j^{\prime}}_i
\end{equation}
where $x^j_{i,\alpha}$ are entries of the matrix $x_{\alpha}$ and $\delta^{j}_i$ is the Kronecker delta-symbol. Relations  (\ref{r1}), (\ref{r2})-(\ref{r5}) hold iff (\ref{Poisson}) 
is a Poisson bracket.

We may interpret the four index tensors $r$ and $a$ as:
 
 1) operators on $V\otimes V$, where $V$ is an $m$-dimensional vector space; 
 
 2)  elements of $Mat_m(\mathbb C)\otimes Mat_m(\mathbb C)$;
 
  3) operators on $Mat_m(\mathbb C)$.

For the first interpretation let $V$ be a linear space with a basis $e_\alpha,~\alpha=1,...,m$. Define linear
operators $r,~a$ on the space $V\otimes V$ by 
$$r(e_\alpha\otimes
e_\beta)=r^{\sigma \epsilon}_{\alpha \beta}e_\sigma\otimes e_\epsilon,\qquad a(e_\alpha \otimes e_\beta)=a^{\sigma \epsilon}_{\alpha \beta}e_\sigma\otimes e_\epsilon.
$$ 
Then the identities (\ref{r1}), (\ref{r2})-(\ref{r5}) 
can be written as 
\begin{equation}
\begin{array}{c} \label{rraa}
r^{12}=-r^{21},~~~r^{23}r^{12}+r^{31}r^{23}+r^{12}r^{31}=0,\\[5mm]
a^{12}a^{31}=a^{31}a^{12},\\[5mm]
\sigma^{23}a^{13}a^{12}=a^{12}r^{23}-r^{23}a^{12},\\[5mm]
a^{32}a^{12}=r^{13}a^{12}-a^{32}r^{13}.
\end{array}
\end{equation}
Here all operators act in $V\otimes V\otimes V$,  $\sigma^{ij}$ 
means the transposition of $i$-th and $j$-th components of the tensor
product, and $a^{ij},~r^{ij}$ mean operators $a,~r$ acting in the
product of the $i$-th and $j$-th components. 

Note that first two relations mean that the tensor $r$ should be skew-symmetric solution of the classical associative Yang-Baxter equation \cite{Agui}.

In the second interpretation we consider the following elements from  $Mat_m(\mathbb C)\otimes Mat_m(\mathbb C)$:
$r=r^{km}_{ij} e^{i}_{k} \otimes e^{j}_{m}, \quad a=a^{km}_{ij} e^{i}_{k} \otimes e^{j}_{m},$ where $e^{i}_{j}$ are the matrix unities:
$e^j_i e^m_k=\delta^j_k e^m_i.$  Then (\ref{r1}), (\ref{r2})-(\ref{r5}) are equivalent to (\ref{rraa}), where 
tensors belong to $Mat_m(\mathbb C)\otimes Mat_m(\mathbb C)\otimes Mat_m(\mathbb C).$ Namely,   $r^{12}=r^{mk}_{ij} e^{i}_{k} \otimes e^{j}_{m}\otimes 1$ and so on. The element $\sigma$ is 
given by $\sigma =e^j_i\otimes e^i_j $.

For the third interpretation, we shall define operators $r, a, \bar r, a^{*}: Mat_{N}\to Mat_{N}$ by 
 $\quad r(x)^p_q=r^{m p}_{n q} x^{n}_{m}$,  $\quad a(x)^p_q=a^{m p}_{n q} x^{n}_{m},$ $\bar r(x)^p_q=r^{p m}_{n q} x^{n}_{m}, \quad   a^{*}(x)^p_q=a^{p m}_{q n} x^{n}_{m}.$ 
 
 Then (\ref{r1}), (\ref{r2})-(\ref{r5}) provide the following operator identities:  
$$ 
\begin{array}{c}
 r(x)=-r^*(x), \qquad r(x) r(y)=r(x r(y))+r(x) y),\\[2mm]
 \bar r(x)=-\bar r^*(x), \qquad \bar r(x) \bar r(y)=\bar r(x \bar r(y))+\bar r(x) y),\\[2mm]
  a(x) a^{*}(y)= a^{*}(y) a(x), \\[2mm]
 a^*(y a(x))=r(x   a^*(y))-r(x)  a^*(y),\\[2mm]
 a(x)a(y)=-a(r(y) x)-a(y r(x)),\\[2mm]
 a^*(a(x) y)=r( a^*(y) x)-   a^*(y) r(x),\\[2mm]
 a(y a^*(x))=-\bar r(x   a(y))+\bar r(x)  a(y),\\[2mm]
 a^*(x)a^*(y)=a^*(\bar r(y) x)+a^*(y \bar r(x)),\\[2mm]
a(a^*(x) y)=-\bar r(a(y) x)+ a(y) \bar r(x)
\end{array}
$$ for any $x,y$.  First two of these identities mean that  operators  $r$ and $\bar r$ satisfies the Rota-Baxter equation \cite{Rota} and this  fact implies also
that the new matrix multiplications $\circ_r$ and $\circ_{\bar r}$ defined by
$$
x\circ_{r}y= r(x)y + xr(y),\quad x\circ_{\bar r}y= {\bar r}(x)y + x{\bar r}(y)
$$
are associative.

 \subsection{Examples and classification of low dimensional quadratic double Poisson brackets}

It is easy to see that for $m=1$ non-zero quadratic double Poisson brackets does not exist. In the simplest non-trivial case $m=2$ the system of algebraic 
equations (\ref{r1}), (\ref{r2})-(\ref{r5}) can be straightforwardly solved . 

\begin{theorem}\label{classifD2}
	Let $m=2.$ Then the following Cases \textbf{1}-\textbf{7} form a complete
	list of quadratic double Poisson brackets up to  equivalence given  a linear change of the generators. 
	We present non-zero
	components of the tensors $r$ and $a$ only.
	
	{\bf Case 1.}  $r^{21}_{22}=-r^{12}_{22}=1$.
	The corresponding (non-zero) double brackets read 
	$$\db{v,v} = v\otimes u - u\otimes v;$$
	
	{\bf Case 2.} $r^{21}_{22}=-r^{12}_{22}=1,$   $a^{11}_{21}=a^{12}_{22}=1$.
	The corresponding (non-zero) double brackets:
	$$\db{v,v} = v\otimes u - u\otimes v + vu\otimes 1 - 1\otimes vu, \, \db{v,u} = u^2 \otimes 1,\,
	\db{u,v} = - 1\otimes u^2 ;$$
	
	{\bf Case 3.}  $r^{21}_{22}=-r^{12}_{22}=1,$   $a^{11}_{12}=a^{21}_{22}=1$.
	The corresponding (non-zero) double brackets:
	$$\db{v,v} = v\otimes u - u\otimes v + uv\otimes 1 - 1\otimes uv, \, db{u,v} = u^2 \otimes 1,\,
	\db{v,u} = - 1\otimes u^2 ;$$
	
	{\bf Case 4.} $r^{22}_{21}=-r^{22}_{12}=1$.  The corresponding (non-zero) double brackets:
	$$\db{v,u} = v\otimes v, \,  \db{u,v} = - v\otimes v;$$
	
	{\bf Case 5.} $r^{22}_{21}=-r^{22}_{12}=1,$;   $a^{21}_{11}=a^{22}_{12}=1$.
	The corresponding (non-zero) double brackets:
	$$\db{v,u} = v\otimes v - 1\otimes v^2, \,  \db{u,v} = - v\otimes v + v^2 \otimes 1,\, \db{u,u} = uv\otimes 1 - 1\otimes uv;$$
	
	{\bf Case 6.} $r^{22}_{21}=-r^{22}_{12}=1,$;   $a^{12}_{11}=a^{22}_{21}=-1$. 
	The corresponding (non-zero) double brackets:
	$$\db{v,u} = v\otimes v - v^2 \otimes 1, \,  \db{u,v} = - v\otimes v + 1 \otimes v^2,\, \db{u,u} = - vu\otimes 1 + 1\otimes vu;$$
	
	{\bf Case 7.}   $a^{11}_{22}=1$.  The corresponding (non-zero) double brackets:
	$$\db{v,v} = u^2\otimes 1 - 1\otimes u^2.$$
\end{theorem}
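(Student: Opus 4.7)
The plan is to solve the polynomial system (\ref{r1}), (\ref{r2})--(\ref{r5}) directly for $m = 2$ and then reduce the solution set to canonical representatives under the action of $GL_2(\mathbb C)$ on the generators $(u,v) = (x_1, x_2)$. First I would count free parameters: the constraint (\ref{r1}) forces $r^{\sigma\sigma}_{\alpha\alpha} = 0$ for $\sigma, \alpha \in \{1,2\}$ and pairs the remaining $12$ entries into $6$ independent ones, while $a$ is a priori unconstrained and contributes $16$ entries. The residual $GL_2(\mathbb C)$-action is $4$-dimensional, and I would use it to put $r$ into normal form before tackling $a$.

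I would then treat the system in two blocks. Equation (\ref{r2}) is the skew-symmetric associative classical Yang--Baxter equation for $r$ alone; using the operator formulation (\ref{rraa}), on a $2$-dimensional space the orbits under $GL_2$ split into three strata — the trivial $r = 0$, a ``first'' normal form with $r^{21}_{22} = -r^{12}_{22} = 1$, and a ``dual'' normal form with $r^{22}_{21} = -r^{22}_{12} = 1$. For each of these, equations (\ref{r3})--(\ref{r5}) reduce to a mixed linear/quadratic system in the $16$ entries of $a$, which I would solve using the stabilizer of $r$ inside $GL_2$ to normalize $a$ further. The expected outcome is Case~7 from the $r = 0$ branch, Cases~1--3 from the first normal form of $r$ (trivial $a$ plus two inequivalent nontrivial strata), and Cases~4--6 from the dual normal form.

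Verifying that each of the seven listed brackets satisfies (\ref{r1})--(\ref{r5}) is a direct substitution. The main obstacle is the bookkeeping required to prove the list is exhaustive and non-redundant: one must track the $GL_2$-stabilizer at each stage and exhibit invariants — such as the rank of $r$ as an operator on $V \otimes V$, the image of $a$ viewed as a map $V^{\otimes 2} \to V^{\otimes 2}$, and behaviour under the flip $\sigma^{12}$ — that distinguish the strata. I would organize the proof so that these invariants become manifest on the chosen canonical representatives, making the pairwise inequivalence of the seven cases immediate while ensuring that every solution of (\ref{r1})--(\ref{r5}) falls into exactly one of them.
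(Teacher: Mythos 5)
Your strategy cannot be compared line-by-line with the paper's own argument, because the paper does not actually prove Theorem~\ref{classifD2}: it defers the proof entirely to the reference \cite{OdRubSok_1}. That said, the route you describe is the natural one and is consistent with the structure the paper signals in its surrounding remarks --- the two nontrivial normal forms of $r$ correspond exactly to the two non-isomorphic anti-Frobenius subalgebras of $\Mat_2(\Cbb)$ (Cases 1 and 4), and the seven cases do sort into the three strata $r=0$, $r$ in the first normal form, $r$ in the dual normal form, exactly as you predict. Your parameter count for $r$ under (\ref{r1}) is also correct.

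The genuine gap is that what you have written is a plan rather than a proof: the entire mathematical content of the theorem lies in the steps you describe but do not execute. Concretely: (i) you assert without computation that the skew-symmetric solutions of (\ref{r2}) on a $2$-dimensional space form exactly three $GL_2(\mathbb C)$-orbits; this is true, but it needs either the short argument via anti-Frobenius subalgebras of $\Mat_2(\Cbb)$ or a direct calculation. (ii) For each normal form of $r$, equations (\ref{r3})--(\ref{r5}) become a quadratic system in the $16$ entries of $a$; solving it and then normalizing by the stabilizer of $r$ is precisely where Cases 2, 3, 5, 6, 7 and the completeness claim come from, and none of that is carried out. (iii) Pairwise inequivalence is subtler than your sketch suggests: Cases 2 and 3 (likewise 5 and 6) are interchanged by an involution of the free algebra that is \emph{not} a linear change of the generators, so these pairs sit very close together, and separating them requires exhibiting an explicit $GL_2$-invariant that takes different values on the two representatives, not merely naming candidate invariants. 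Until items (i)--(iii) are done, the argument establishes nothing beyond the (routine) verification that the seven listed brackets satisfy (\ref{r1})--(\ref{r5}).
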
 

For a proof of  (\ref{classifD2}) see \cite{OdRubSok_1}.

\begin{remark}{\rm Cases \textbf{2}} and {\bf 3} as well as {\rm Cases \textbf{5}} and {\bf 6} are linked via the  involution    

\end{remark} 

\begin{remark} {\rm Case\textbf{ 1}} is equivalent to the double bracket from Example 1 with $m=2.$\end{remark}

\begin{remark} It is easy to verify (see \cite{Agui}) that there exist only two non-isomorphic anti-Frobenius subalgebras in $\operatorname{Mat}_2(\mathbb{C})$. They are matrices with one zero 
	column and matrices with one zero row.  {\rm Cases \textbf{1}} and {\bf 4} correspond to them.\end{remark}

\begin{remark} Notice that the trace Poisson brackets for {\rm Cases \textbf{2}} and {\bf 4} are non-degenerate. Corresponding symplectic forms can be found in \cite{biel} 
	(Example 5.7 and Lemma 7.1).\end{remark}  

\begin{remark} The corresponding Lie algebra structures on the trace space $\A/[\A,\A]$  are trivial (abelian) in all cases, except the cases \textbf{2}, \textbf{3} and \textbf{4} :
	$$ 
	[\bar u,\bar v] = -\bar u^2 \quad ({\rm Case}\, 2), \quad [\bar u,\bar v] = \bar u^2\quad  ({\rm Case}\, 3),\quad [\bar u,\bar v] = -\bar v^2\quad ({\rm Case}\,  4).
	$$
	These cases give the isomorphic Lie algebra structures on $\A/[\A,\A]$ with respect to the involutions
	$u\to v,\quad v\to u$ and $u\to u\quad v\to -v.$
\end{remark}

\begin{example} Consider the trace Poisson bracket (\ref{Poisson}) corresponding to case \textbf{6}. Its Casimir functions are given by
	$$
	\mbox{tr}\,v^k, \qquad \mbox{tr}\,u v^k,  \qquad k=0,1,...
	$$
	where $u=x_1, v=x_2.$ Functions $\mbox{tr} \, u^i$ and $\mbox{tr} \,v u^i$, where $i=2,3,...$ commute each other with 
	respect to this bracket.
\end{example}




\bibliographystyle{plain}

\end{document}